\documentclass[11pt]{amsart}
\usepackage[left=1.4in,right=1.4in,top=0.75in,bottom=0.75in]{geometry}

\usepackage{mathrsfs}
\usepackage{amsmath,amssymb,enumerate,tikz}
\usepackage{amsfonts,amssymb}
\usepackage{booktabs}
\usepackage[colorlinks,linkcolor=red,anchorcolor=green,
citecolor=green,CJKbookmarks=True]{hyperref}
\usepackage{xypic}

\usepackage[T1]{fontenc}
\usepackage{csquotes}


\setlength{\parskip}{0.5\baselineskip}

\theoremstyle{plain}
\newtheorem{theorem}{Theorem}[section]
\newtheorem{lemma}[theorem]{Lemma}
\newtheorem{corollary}[theorem]{Corollary}

\newtheorem{proposition}[theorem]{Proposition}

\theoremstyle{definition}

\newtheorem{remark}[theorem]{Remark}
\newtheorem{question}{Question}

\newcommand{\MCG}{\mathcal{MCG}}
\newcommand{\ML}{\mathcal{ML}}
\newcommand{\PML}{\mathcal{PML}}

\newcommand{\MCGS}{\mathcal{MCG}^\pm(S_{g,n})}
\newcommand{\MLS}{\mathcal{ML}(S_{g,n})}
\newcommand{\PMLS}{\mathcal{PML}(S_{g,n})}

\newcommand{\C}{\mathscr{C}}
\newcommand{\T}{\mathcal{T}}
\newcommand{\R}{\mathbb R}
\newcommand{\Z}{\mathbb{Z}}

\renewcommand{\i}{\mathrm{i}}
\newcommand{\D}{\mathscr D}
\newcommand{\PD}{\mathscr{PD}}
\renewcommand{\d}{\mathrm{d}}
\newcommand{\length}{\ell}
\newcommand{\Isom}{\mathrm{Isom}}
\newcommand{\Th}{\mathrm{Th}}
\newcommand{\dth}{\mathrm{d}_{\mathrm{Th}}}
\newcommand{\TS}{\mathcal{T}(S_{g,n})}
\newcommand{\TSprime}{\mathcal{T}(S_{g',n'})}


\usepackage{soul, xcolor, todonotes, easyReview}

\title{Local Rigidity of Teichm\"uller space with the Thurston metric}
\author{Huiping Pan}
\address{School of mathematics, South China  University of Technology\\
Wushan Rd 381, Tianhe, Guangzhou, China, 510641}
\email{panhp@scut.edu.cn}
\date\today

\begin{document}

\maketitle

\begin{abstract}
  We show that every $\mathbb R$-linear surjective isometry between the {cotangent} spaces to the Teichm\"uller space equipped with the Thurston norm is induced by some isometry between the underlying hyperbolic surfaces. This is an analogue of Royden's theorem concerning the Teichm\"uller norm. 
\end{abstract}
\begin{quote}
\small{
  \noindent Keywords: {Teichm\"uller space, Thurston metric, Royden theorem, curve complex, rigidity.}
  \vskip 5pt
\noindent AMS MSC2020: {30F60, 32G15 }
}
\end{quote}

\section{Introduction}

\subsection{Background}


Let $S_{g,n}$ be an oriented surface of genus $g$ with $n$ punctures whose Euler characteristic is negative: $\chi(S_{g,n})=2-2g-n<0$. Let $\TS$ and $\MCGS$  be respectively the Teichm\"uller space and  the extended mapping class group of $S_{g,n}$.

In his famous paper, Royden \cite{Royden1971} proved the \textit{infinitesimal rigidity} of the Teichm\"uller metric which states that for closed surfaces of genus at least two, every complex linear surjective isometry between the space of integrable holomorphic quadratic differentials on Riemann surfaces are induced by some biholomorphic map between the underlying Riemann surfaces. Based on this, he showed that every (global) isometry  of the Teichm\"uller space with the \textit{Teichm\"uller metric} is an element of the extended mapping class group, and that every local isometry  is the restriction of a global isometry. Since then the isometry rigidity problem of the Teichm\"uller metric has been extensively studied (see \cite{Royden1971}, \cite{EarleKra1974a}, \cite{EarleKra1974},
\cite{EarleGardiner1996}, \cite{Lakic1997}, \cite{AbatePatrizio1998}, \cite{Ivanov2001},
\cite{Markovic2003}, \cite{EarleMarkovic2003}, \cite{FarbWeinberger2010},  \cite{MineyamaMiyachi2013}). This result was extended  by Earle-Kra in  \cite{EarleKra1974a} to the case of punctured Riemann surfaces, and in \cite{EarleKra1974} to the case of isometries between different Teichm\"uller spaces. Later, it was  further extended  to bordered Riemann surfaces by Earle-Gardiner in \cite{EarleGardiner1996} and to Riemann surfaces with infinitely many punctures  by Lakic  in \cite{Lakic1997}, where in both cases the Teichm\"uller spaces are of infinite dimensional. The problem was completely settled  for general surfaces which are not of exceptional finite type by Markovic in \cite{Markovic2003}. The proofs in these papers are  based on a detailed analysis of the space of integral holomorphic quadratic differentials which serves as the cotangent space to the Teichm\"uller space.

  In \cite{Ivanov2001}, Ivanov provided a different proof of the \textit{ global isometry rigidity }  from the point of view of Mostow rigidity. A  key role in the proof is played by the \textit{curve complex}, which is an analogue of the Tits building. By analysing the asymptotic behaviour of pairs of Teichm\"uller geodesic rays, Ivanov showed that every isometry of the Teichm\"uller metric induces an automorphism of the curve complex. Then he applied the \textit{automorphism rigidity of curve complexes}  to show that every isometry is an element of the extended mapping class group.

  Ivanov's idea has been extended to other metrics on the Teichm\"uller spaces. In \cite{MasurWolf2002},  Masur-Wolf  proved the \textit{global isometry rigidity} of   the Teichm\"uller space $\T(S_{g,n})$ with the \textit{Weil-Petersson metric}, provided that  $(g,n)\notin\{(0,3),(0,4),(1,1),(1,2)\}.$ Later, it was extended by Brock-Margalit (\cite{BrockMargalit2007}) to all hyperbolic surfaces of finite area via the \textit{automorphism rigidity of pants complex} (\cite{Margalit2004}).

  In \cite{Walsh2014}, Walsh proved the \textit{global isometry rigidity} of the Teichm\"uller space $\T(S_{g,n})$ with the \textit{Thurston metric}, provided that  $(g,n)\notin\{(0,4),(1,1),(1,2)\}$.  Walsh's  proof is based on the \textit{automorphism rigidity of the curve complex} as well as the \textit{horofunction compactification} of $\T(S_{g,n})$.
  Very recently, Dumas-Lenzhen-Rafi-Tao (\cite{DLRT2016}) proved the \textit{infinitesimal rigidity}, the \textit{local isometry rigidity}, and the \textit{global isometry rigidity} of  $\T(S_{1,1})$ with the Thurston metric. The idea of their proof is to recognize lengths and intersection numbers of curves on $X$ from the features of the unit sphere in the tangent space to  $\T(S_{1,1})$, and then apply the  Fenchel-Nilsen coordinates to identify the underlying hyperbolic surfaces.

  In \cite{FarbWeinberger2010}, Farb and Weinberger  proved  a general rigidity  for all complete, finite covolume and $\MCG(S_{g,n})$ invariant Finsler metrics on $\T(S_{g,n})$ from a totally different perspective, which states that the group of isometries contains $\MCG(S_{g,n})$ as a subgroup of finite index.

The goal of this paper is to consider the \textit{infinitesimal rigidity} of the \textit{Thurston metric}.

\subsection{The Thurston metric}
The Thurston metric on $\T(S_{g,n})$ is defined as following:\footnote{Technically, Thurston defined $\dth(X,Y)$ as the logrithm of the minimum of Lipschitz constants among Lipschitz homeomorphisms from $X$ to $Y$ which are homotopic to the identity, and  proved (\ref{eq:thurston:metric}) in \cite[Theorem 8.5]{Thurston1998}. Here we use the latter as the definition of the Thurston metric for the sake of convenience.}
\begin{equation}\label{eq:thurston:metric}
  \d_{\Th}(X,Y)=\log \sup_{\alpha\in\mathscr C^{0}(S_{g,n})}
  \frac{\ell_\alpha(Y)}{\ell_\alpha(X)},
\end{equation}
where $\mathscr C^0(S_{g,n})$ is the set of (isotopy classes of) simple closed curves on $S_{g,n}$ and $\ell_\alpha(X)$ is the hyperbolic length of the geodesic representative of $\alpha$ on $X$ . Thurston showed in \cite{Thurston1998} that $\dth$ is a Finsler metric, i.e. it is induced by a (nonsymmetric) norm on the tangent bundle:
 \begin{equation}\label{eq:thurston:norm}
     \|v\|_{\mathrm{Th}}:=\sup _{\lambda \in \PML(S)}
     (d_X \log\length_\lambda)[v], \qquad \forall v\in T_X \T(S_{g,n}),
   \end{equation}
   where $\PML(S_{g,n})=\ML(S_{g,n})/_{\R_{>0}}$ is the space of projective measured laminations and $\ML(S_{g,n})$ is the space of measured laminations on $S_{g,n}$. We call the above asymmetric norm the \textit{Thurston infinitesimal norm}.

   \begin{theorem}[\cite{DLRT2016}, Theorem 6.1]
   \label{thm:regularity}
      Let $S_{g,n}$ be an orientable surface of finite  type with negative Euler characteristic. Then the Thurston norm function $T \TS\to\R$ is locally Lipschitz.
   \end{theorem}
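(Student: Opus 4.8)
The plan is to derive the statement from the elementary observation that a supremum of $L$-Lipschitz functions is $L$-Lipschitz wherever it is finite, combined with a uniform Lipschitz estimate for the individual logarithmic length functionals. To set up, note that since $\length_{t\lmd}=t\,\length_\lmd$ for $t>0$, the quantity
\[
F_\lmd(X,v):=\frac{(d_X\length_\lmd)[v]}{\length_\lmd(X)}
\]
depends only on the projective class of $\lmd\in\MLS$, and \eqref{eq:thurston:norm} reads $\|v\|_{\mathrm{Th}}=\sup_{\lmd}F_\lmd(X,v)$, the supremum being finite because $\|\cdot\|_{\mathrm{Th}}$ is a genuine (asymmetric) norm by Thurston's theorem \cite{Thurston1998}. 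For each fixed $\lmd$ the geodesic length $X\mapsto\length_\lmd(X)$ is real-analytic (a classical fact; see e.g.\ Kerckhoff and Wolpert), so $F_\lmd$ is a smooth function on $T\TS$. I fix $X_0\in\TS$, a coordinate chart $U\ni X_0$ --- say a Fenchel--Nielsen chart --- and a compact convex neighbourhood $K\subset U$; in the induced chart $T\TS|_U\cong U\times\R^{N}$ (with $N=\dim_{\R}\TS$) it suffices to bound the first-order partial derivatives of $F_\lmd$ on $K\times\{|v|\le1\}$ by a constant independent of $\lmd$. Indeed a supremum of such uniformly Lipschitz functions is Lipschitz on $K\times\{|v|\le1\}$, and positive homogeneity in $v$ then promotes this to a local Lipschitz estimate near an arbitrary point of $T\TS|_U$; since $X_0$ is arbitrary the theorem follows.

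To produce the uniform estimate I normalize laminations by setting $P:=\{\lmd\in\MLS:\length_\lmd(X_0)=1\}$, a compact transversal to $\PMLS$, so that the supremum may be taken over $\lmd\in P$. Continuity of $\length\colon\MLS\times\TS\to\R$ and compactness of $P\times K$ give $0<m\le M$ with $m\le\length_\lmd(X)\le M$ for all $(\lmd,X)\in P\times K$. The heart of the matter is to establish, with constants $C_1,C_2$ depending only on $K$,
\[
\bigl|\partial_j\length_\lmd(X)\bigr|\le C_1,\qquad
\bigl|\partial_j\partial_k\length_\lmd(X)\bigr|\le C_2
\qquad\text{for all }\lmd\in P,\ X\in K .
\]
Granting this, write $F_\lmd(X,v)=\bigl(\textstyle\sum_j\partial_j\length_\lmd(X)\,v_j\bigr)/\length_\lmd(X)$; differentiating and using $\length_\lmd\ge m$ together with the two displayed bounds controls $\partial_{v_k}F_\lmd$ and $\partial_{x_k}F_\lmd$ uniformly in $\lmd\in P$ and $(X,v)\in K\times\{|v|\le1\}$, which is precisely the uniform Lipschitz bound required above.

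The obstacle is that $\length$ is merely continuous on $\MLS\times\TS$ and is not jointly $C^1$ there, so the bounds on $\partial_j\length_\lmd$ and $\partial_j\partial_k\length_\lmd$ cannot be obtained by invoking joint smoothness and compactness of $P$ alone. My plan is to localize in $\PMLS$ via train tracks: cover the compact set $P$ by finitely many charts $P=P_1\cup\cdots\cup P_r$, where $P_i$ consists of the measured laminations carried by a fixed train track $\tau_i$, coordinatized by their bounded weight vectors. Once the combinatorial shape of a lamination is pinned down by $\tau_i$, its geodesic realization --- and hence its length --- deforms real-analytically with the pair (weights, hyperbolic structure), as one sees concretely by expressing the relevant geodesic lengths in Fenchel--Nielsen coordinates through hyperbolic trigonometry on the complementary pieces of $\tau_i$; so on each chart the length pairing is jointly real-analytic, and in particular all of its $X$-derivatives are bounded on the compact set $\overline{P_i}\times K$. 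Taking $C_1,C_2$ to be the maxima over $i=1,\dots,r$ of these finitely many bounds completes the proof. The step I expect to require the most care is exactly this joint $C^2$ (indeed real-analytic) regularity of the length pairing on a train-track chart, uniformly up to the boundary of the chart; a more pedestrian but robust alternative is to bypass train tracks entirely and estimate $\length_\lmd(X)$ directly by concatenating geodesic arcs crossing the collars of a fixed pants decomposition of $X$, bounding each arc length together with its first two $X$-derivatives by explicit hyperbolic trigonometry, with all constants controlled by a lower bound on the injectivity radius and an upper bound on the diameter of $K$.
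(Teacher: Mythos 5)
You should first note that the paper itself contains no proof of this statement: it is imported verbatim from \cite{DLRT2016} (Theorem 6.1), so your proposal has to stand on its own. Your reduction is sound as a skeleton: normalizing over the compact transversal $P=\{\lambda:\ell_\lambda(X_0)=1\}$, and observing that uniform bounds on $|\partial_j\ell_\lambda(X)|$ and $|\partial_j\partial_k\ell_\lambda(X)|$ over $P\times K$ make the family $F_\lambda$ uniformly Lipschitz, hence the supremum locally Lipschitz, is correct. The gap is exactly the step you flag as the heart of the matter: the claim that on a train-track chart the pairing $(w,X)\mapsto\ell_{\lambda(w)}(X)$ is jointly real-analytic. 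This is false: for fixed $X$ the function $\lambda\mapsto\ell_\lambda(X)$ is not even differentiable in the weight coordinates at weighted simple closed curves, and these are dense in the chart, so there is no analytic (or $C^1$) extension to $\overline{P_i}$ from which compactness could extract your constants $C_1,C_2$. Concretely, on $S_{1,1}$ use the standard parametrization in which the slope $p/q$ curve has weights $(p,q)$; let $\gamma=(1,1)$, $\alpha=(1,0)$, so $D^{\pm n}_\gamma\alpha$ has weights $(1,1)\cdot n\pm(1,0)$ up to sign. If $w\mapsto\ell_{\lambda(w)}(X)$ were differentiable at $(1,1)$, the difference quotients along the two sequences $(1,1)\pm\tfrac1n(1,0)$ would force $C_++C_-=0$, where $C_\pm=\lim_{n\to\infty}\bigl(\ell_{D^{\pm n}_\gamma\alpha}(X)-n\,\ell_\gamma(X)\bigr)$. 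But writing $\mathrm{tr}(A^nB)=P\lambda^n+Q\lambda^{-n}$ with $\lambda=e^{\ell_\gamma(X)/2}$ gives $C_+=2\log P$, $C_-=2\log Q$ and, for a cusped punctured torus (trace relation $x^2+y^2+z^2=xyz$), $PQ=x^2/(x^2-4)>1$; so $C_++C_->0$. This is the familiar asymmetry between twisting left and right around $\gamma$, and it destroys the joint-analyticity claim on which your uniform bounds rest.

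What your compactness argument actually needs is not regularity in $\lambda$ at all, but joint continuity of $(\lambda,X)\mapsto d_X\ell_\lambda$ and of the $X$-Hessian of $\ell_\lambda$ (weak-$*$ in $\lambda$), e.g. via Wolpert-type variational formulas in Fenchel--Nielsen coordinates such as $\partial\ell_\lambda/\partial\tau_j=\int\cos\theta\,d\lambda$ and their second-order analogues, with everything bounded on $P\times K$ by intersection numbers $i(\lambda,\alpha_j)$ and the collar geometry. That statement is true and would complete your outline, but it is precisely the content that has to be proved, and neither the train-track analyticity claim nor the remark about ``hyperbolic trigonometry on the complementary pieces of $\tau_i$'' establishes it: an irrational lamination carried by $\tau_i$ admits no finite trigonometric expression, and even for carried curves the combinatorics of how often each branch is traversed changes with $w$, so there is no single formula on the chart. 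Your ``pedestrian alternative'' (collar-crossing estimates with uniform control of two $X$-derivatives of arc lengths) is the right kind of argument and is close in spirit to the actual proof in \cite{DLRT2016}, but as written it is a one-sentence sketch; as it stands the proposal has a genuine gap at its central step.
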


    The unit sphere in the cotangent bundle has a very nice description as following.
 \begin{theorem}[\cite{Thurston1998}, Theorem 5.1]\label{thm:convex:model}
   For any hyperbolic surface $X$ of finite type, the map
   \begin{equation*}
     \begin{array}{cccc}
       \PD_X: & \PMLS & \longrightarrow & T^*_X\TS\\
        & [\mu] & \longmapsto & d_X \log \ell_{\mu}
     \end{array}
   \end{equation*}
   embeds $\PMLS$ as the boundary of a convex neighbourhood of the origin.  This convex neighbourhood is dual to the unit ball $\{v\in T_X\TS:\|v\|_{\mathrm{Th}}\leq1\}$.
 \end{theorem}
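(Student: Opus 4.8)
The plan is to read the statement off from the definition \eqref{eq:thurston:norm} of the Thurston norm by convex duality, isolating the two places where honest hyperbolic geometry --- rather than soft convexity and point-set topology --- is needed.

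First I would record the analytic preliminaries. For a measured lamination $\mu$ the length $\length_\mu(X)$ is positive, real-analytic in $X$, and homogeneous of degree one in $\mu$, so $d_X\log\length_\mu$ is a well-defined covector depending only on $[\mu]$, and $[\mu]\mapsto d_X\log\length_\mu$ is continuous (under $\mu_k\to\mu$ the length functions converge uniformly on compacta, and Cauchy estimates upgrade this to $C^1$-convergence since the family is uniformly real-analytic). The point is that \eqref{eq:thurston:norm} reads $\|v\|_{\Th}=\sup_{[\mu]\in\PMLS}\langle\PD_X[\mu],v\rangle$. Writing $K:=\PD_X(\PMLS)$, which is compact by continuity, the tangent unit ball $B=\{v:\|v\|_{\Th}\le1\}$ is thus the polar of $K$, and by the bipolar theorem the polar of $B$ in $T^*_X\TS$ is $B^{\circ}=\overline{\operatorname{conv}}(K\cup\{0\})$. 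This is the convex neighbourhood of the origin in the statement, dual by construction to $B$. That $B^{\circ}$ is a genuine compact body with $0$ in its interior is the assertion that $\|\cdot\|_{\Th}$ is a true asymmetric norm --- finite by Theorem~\ref{thm:regularity}, positive definite by \cite{Thurston1998} --- so $\partial B^{\circ}$ is a topological $(6g-7+2n)$-sphere. It remains to prove: (a) $K\subseteq\partial B^{\circ}$; (b) $\PD_X$ is injective; (c) $K=\partial B^{\circ}$ and $\PD_X$ is a homeomorphism onto it.

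For (a) I would use Thurston's stretch maps. Given $[\mu]$, complete the support of $\mu$ to a maximal geodesic lamination $\widehat\mu$ adapted to the stretch construction (possible for every $\mu$, with the usual care near the cusps), and let $v$ be the initial velocity of the stretch ray $X_t=\operatorname{str}^{t}_{\widehat\mu}(X)$. Thurston's analysis shows that along this ray every measured lamination has length growing with rate at most $1$, with equality exactly for those supported on $\widehat\mu$ --- in particular for $\mu$. Hence $\|v\|_{\Th}=\sup_{[\nu]}\langle\PD_X[\nu],v\rangle=1=\langle\PD_X[\mu],v\rangle$, so $\{\phi:\langle\phi,v\rangle=1\}$ supports $B^{\circ}$ at $\PD_X[\mu]$ and $\PD_X[\mu]\in\partial B^{\circ}$. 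For (b) I would recover $\mu$ up to scale from the germ of $\length_\mu$ at $X$: the first-order variation of $\length_\mu$ along the earthquake flows about simple closed curves is given by the Wolpert--Kerckhoff cosine formula, which encodes the transverse structure of $\mu$ into $d_X\log\length_\mu$ finely enough to pin $\mu$ down, using that $\MLS$ embeds into $\R^{\mathscr{C}^0(S_{g,n})}$ via geometric intersection numbers (cf.\ \cite{Thurston1998}). Granting (a) and (b), $\PD_X$ is a continuous injection between the two topological $(6g-7+2n)$-spheres $\PMLS$ and $\partial B^{\circ}$; invariance of domain makes it open, so $K$ is open in $\partial B^{\circ}$, while $K$ is also compact hence closed, so $K=\partial B^{\circ}$ by connectedness; a continuous bijection of a compact space onto a Hausdorff space is a homeomorphism, which gives (c).

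The duality reduction and the sphere-to-sphere topology are routine once the geometry is in hand, so I expect the real obstacles to be precisely steps (a) and (b): producing, for each $\mu$, a deformation of $X$ along which $\length_\mu$ is infinitesimally length-maximal --- what the stretch-map machinery of \cite{Thurston1998} delivers, the only delicate point being the punctured case --- and the fact that the first-order behaviour of $\length_\mu$ at a single point already remembers $\mu$. I would also emphasize that (c) cannot be shortcut through strict convexity of $B^{\circ}$: the Thurston norm is in general not $C^1$ and $B^{\circ}$ genuinely has flat faces, so the invariance-of-domain argument, and hence the injectivity of (b), is unavoidable.
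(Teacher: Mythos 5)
You should first note that the paper contains no proof of this statement at all: it is quoted verbatim as Theorem~5.1 of \cite{Thurston1998}, so there is no internal argument to compare against, and your proposal has to be judged on its own. Its overall architecture is sound and is essentially the standard route: the identity $\|v\|_{\Th}=\sup_{[\mu]}\langle \PD_X[\mu],v\rangle$ plus the bipolar theorem reduces everything to (a) each $\PD_X[\mu]$ lies on the boundary of $\overline{\operatorname{conv}}(K\cup\{0\})$, which your stretch-ray argument via Corollary \ref{cor:infinitesimal} handles correctly, and (b) injectivity, after which (c) invariance of domain between spheres of dimension $6g-7+2n$ finishes the job.

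The genuine gap is step (b). As written it is not an argument: the Wolpert--Kerckhoff twist derivative is $d_X\length_\mu(t_\alpha)=\int\cos\theta\,d\mu$, which is \emph{not} the intersection number $\i(\mu,\alpha)$, so the appeal to ``$\MLS$ embeds into $\R^{\C^0(S_{g,n})}$ via intersection numbers'' is a non sequitur --- nothing in the first-order germ of $\length_\mu$ at the single point $X$ is shown to recover any $\i(\mu,\alpha)$ (the formulas that do, e.g.\ $\i(\mu,\alpha)=\lim_t \length_\mu(E_{t\alpha}X)/t$, are global, not infinitesimal). Moreover the hardest case is invisible to your sketch: two distinct (projective classes of) transverse measures carried by the same non-uniquely-ergodic minimal lamination have identical support and identical topological intersection pattern, so no support-based or ``transverse structure'' argument distinguishes them; what is needed is precisely the linear independence of the covectors $d_X\length_{\mu_i}$ of the ergodic measures on a common support, and that is the substantive content of Thurston's injectivity claim. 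Note that your own machinery can be pushed further than you pushed it --- completing $\operatorname{supp}\mu$ to a maximal lamination and using the equality criterion of Corollary \ref{cor:infinitesimal} in both directions shows that $d_X\log\length_\mu=d_X\log\length_\nu$ forces $\operatorname{supp}\mu=\operatorname{supp}\nu$ --- but this still leaves the non-uniquely-ergodic case untouched, so as it stands the proposal proves a weaker statement than the theorem and the injectivity step must either be proved by a genuinely new argument (e.g.\ via nondegeneracy of the Weil--Petersson pairing $d\length_\mu=\omega(\cdot,t_\mu)$ together with injectivity of $\mu\mapsto t_\mu(X)$, itself a theorem) or be conceded as a citation to \cite{Thurston1998}, in which case one is citing the very result to be proved.
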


   The following rigidity questions were raised by Papadopoulos, Ther\'et, Dumas, and Rafi  in
  \cite[Problem IV]{PapadopoulosTheret2007} and  \cite[Problem 2.1, Problem 2.6]{Su2016}.
 \begin{question}\label{question1}
 \begin{enumerate}
   \item Does the Thurston infinitesimal norm  determine the underlying hyperbolic surfaces?
    \item Is each local isometry of Teichm\"uller space with the Thurston metric induced by an element of the extended mapping class group?
 \end{enumerate}
 \end{question}
 \begin{remark}
 For $\T(S_{1,1})$,
 Question \ref{question1} was answered affirmatively  by Dumas-Lenzhen-Rafi-Tao  in \cite{DLRT2016}.
 \end{remark}


\subsection{Main results}


 We answer Question \ref{question1} affirmatively.
\begin{theorem}
\label{thm:maintheorem1}
  Let $S_{g,n}$ be an orientable surface of finite  type with negative Euler characteristic. Then every  surjective $\R$-linear isometry
 \begin{equation*}
   \Phi^*:(T^*_X\TS,\|\bullet\|_{\mathrm{Th}})
  \to(T^*_Y\TS,\|\bullet\|_{\mathrm{Th}})
 \end{equation*}
  is induced by some isometry $\phi: X \to Y$ such that
  $\Phi^*(d_X \length_\mu)=d_Y\length_{\phi\mu}$ for every $\mu\in\ML(S_{g,n})$. In particular, $Y$ belongs to the $\MCGS$ orbit of $X$.
\end{theorem}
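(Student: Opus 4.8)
The plan is to recover the hyperbolic surface $X$ from the normed cotangent space $(T^*_X\TS,\|\bullet\|_{\mathrm{Th}})$ by identifying, intrinsically, the image of $\PMLS$ under the embedding $\PD_X$ of Theorem~\ref{thm:convex:model}, and then to extract from that image combinatorial and metric data sufficient to pin down $X$. Since $\Phi^*$ is a linear isometry, it carries the unit sphere of the domain to the unit sphere of the target; by Theorem~\ref{thm:convex:model} these spheres are exactly $\PD_X(\PMLS)$ and $\PD_Y(\PMLS)$ (as boundaries of the dual convex bodies). So $\Phi^*$ induces a homeomorphism $\PMLS\to\PMLS$ which is moreover \emph{projective} on the cone level: it comes from an $\R$-linear isomorphism $\widetilde\Phi^*:T^*_X\TS\to T^*_Y\TS$ sending the cone over $\PD_X(\PMLS)$ to the cone over $\PD_Y(\PMLS)$. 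The first task is thus to show this homeomorphism of $\PMLS$ must be induced by an element of $\MCGS$.

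The key step is to recognize the extreme points and the ``non-smooth'' stratification of the convex body $\{\|\bullet\|\le 1\}$. Simple closed curves (and more generally rational laminations, those supported on disjoint simple closed curves) should be detected as the points of $\PD_X(\PMLS)$ that are \emph{extreme} in the strongest sense, or equivalently the points where the dual unit ball in $T_X\TS$ fails to be differentiable in a maximal-codimension way — this is the infinitesimal shadow of the fact that stretch vector fields along a curve system give the ``corners'' of the Thurston unit sphere. One expects: $d_X\length_\alpha$ for $\alpha$ a simple closed curve is characterized among elements of $\PD_X(\PMLS)$ by a local-convexity/exposedness condition that is preserved by linear isometries. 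Having singled out the curves, one next recovers the \emph{curve complex} $\CS$: two curves $\alpha,\beta$ are disjoint iff the ``flat piece'' of $\partial\{\|\bullet\|\le1\}$ spanned by $d_X\length_\alpha$ and $d_X\length_\beta$ has the expected dimension, i.e. iff $d_X\length_{\alpha+\beta}=d_X\length_\alpha+d_X\length_\beta$ up to the projective identification (additivity of length differentials holds exactly for disjoint, hence simultaneously-shrinkable, curves). Thus $\Phi^*$ induces a simplicial automorphism of $\CS$. By the automorphism rigidity of the curve complex (Ivanov, Korkmaz, Luo — valid outside the low-complexity exceptions, and in those exceptional cases one argues directly as in \cite{DLRT2016} for $S_{1,1}$), this automorphism is realized by some $h\in\MCGS$. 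Composing $\Phi^*$ with the cotangent map of $h^{-1}$, we may assume $\Phi^*$ fixes every $d_X\length_\alpha$ up to scale — in fact, after rescaling by the overall isometry constant, we can take it to fix each $d_X\length_\alpha$, using that $\Phi^*$ is a genuine isometry (not just projective) to kill the scalars coherently: the ratios $\length_\alpha(X)/\length_\alpha(Y)$ are pinned down by comparing norms, and the cocycle condition forces a single global constant, hence (up to the isometry being an honest isometry) the constant is $1$ and $X, Y$ have the same marked simple length spectrum.

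From $\Phi^*(d_X\length_\alpha)=d_Y\length_\alpha$ for all simple closed curves $\alpha$, I would conclude $\length_\alpha(X)=\length_\alpha(Y)$ for all $\alpha\in\mathscr C^0(S_{g,n})$, and then invoke the classical fact that the marked simple length spectrum determines a point of Teichm\"uller space (equivalently, $X$ and $Y$ define the same point of $\TS$, so the identity is the desired isometry $\phi$; in general $\phi$ is the composite of $h$ with this identification). Finally, to upgrade $\Phi^*(d_X\length_\mu)=d_Y\length_{\phi\mu}$ from simple closed curves to all measured laminations $\mu\in\ML(S_{g,n})$, I would use continuity of $\mu\mapsto d_X\length_\mu$ together with density of weighted simple closed curves in $\MLS$, plus the fact that $\widetilde\Phi^*$ is linear and continuous and agrees with the assignment $d_X\length_\mu\mapsto d_Y\length_{\phi\mu}$ on a dense set of rays — the two continuous maps on $\PD_X(\PMLS)$ then coincide everywhere.

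The main obstacle I anticipate is the very first recognition step: extracting the simple closed curves (and the disjointness relation) purely from the linear-isometry type of the norm, i.e. proving that the exposedness/flat-face structure of $\partial\{\|\bullet\|\le1\}$ singles out exactly the curve-system laminations and records their intersection pattern. This is where the geometry of Thurston's stretch maps and the local structure of the Thurston norm (Theorem~\ref{thm:regularity}) must be used in an essential way; the curve-complex rigidity input and the length-spectrum rigidity input are then comparatively standard. The low-complexity surfaces $S_{0,4}, S_{1,1}, S_{1,2}$, where curve-complex rigidity fails or is degenerate, will need the separate direct treatment already available from \cite{DLRT2016} and its analogues.
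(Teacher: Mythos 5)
Your outline follows the same broad strategy as the paper (recognize curves from the flat/corner structure of the unit balls, detect disjointness, invoke curve-complex rigidity, then pin down lengths), but two steps are genuinely incomplete as written. The most serious one is the point where you ``kill the scalars coherently.'' After composing with the mapping class $h$ you do get $\Phi^*(d_X\log\ell_\alpha)=d_Y\log\ell_{h\alpha}$ exactly (the supporting functional of a facet is unique), hence $\Phi^*(d_X\ell_\alpha)=c_\alpha\, d_Y\ell_{h\alpha}$ with $c_\alpha=\ell_\alpha(X)/\ell_{h\alpha}(Y)$; but ``comparing norms'' gives nothing beyond this identity and there is no cocycle relation between different $c_\alpha$'s. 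For disjoint $\alpha,\beta$, linearity only yields $\Phi^*(d_X\ell_{\alpha+\beta})=c_\alpha d_Y\ell_{h\alpha}+c_\beta d_Y\ell_{h\beta}=d_Y\ell_{c_\alpha h\alpha+c_\beta h\beta}$, which is a perfectly good length differential for \emph{any} positive $c_\alpha,c_\beta$, so no constraint appears. What is missing is the fact that $\Phi^*$ sends the ray of $d_X\ell_{\alpha+\beta}$ to the ray of $d_Y\ell_{h\alpha+h\beta}$ \emph{with the same relative weights}. The paper obtains this by first using density of weighted simple closed curves and continuity to show that the composed linear map equals the canonical map $\Gamma_{X,\phi^{-1}Y}$ of (\ref{eq:homeo:tangent:XY}) on all of $T^*_X\TS$, and only then runs the linearity-rigidity argument (Theorem \ref{thm:linearity:rigidity}): additivity over disjoint pairs forces equal ratios, connectedness of the curve complex (Theorem \ref{thm:connectedness}) propagates them to one constant $K$, and $K=1$ because otherwise $\dth(X,Y)$ or $\dth(Y,X)$ would be negative. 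You invoke the density/continuity argument only at the very end, after already asserting the constant is $1$; moved earlier it is exactly what closes your gap, but as stated the constancy claim is unsupported.

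On the recognition step you flag as the main obstacle: identifying curves with the maximal flat pieces is Thurston's theorem (Theorem \ref{thm:mostly:facets}), so it is available rather than an obstacle; the part that does require proof is the disjointness criterion, and your formulation (``$d_X\ell_{\alpha+\beta}=d_X\ell_\alpha+d_X\ell_\beta$'') is circular, since $\alpha+\beta$ is only defined when the curves are already disjoint. The intrinsic statement is that the segment joining the two unit covectors lies in the unit sphere iff $\i(\alpha,\beta)=0$, and proving both directions needs the stretch-line input (Corollary \ref{cor:infinitesimal} and Lemma \ref{lem:disjointness}). Finally, your blanket deferral of the low-complexity cases to \cite{DLRT2016} is not quite enough: that paper treats only $S_{1,1}$; for $S_{0,4}$ one must redo the facet-length estimates (Propositions \ref{prop:facet:length}--\ref{prop:correspondence}, with a different exponent), and $S_{1,2}$ is not handled by the exceptional-case machinery at all --- there the issue is that automorphisms of $\C(S_{1,2})$ (and isomorphisms with $\C(S_{0,5})$) need not come from mapping classes, so one must show the induced isomorphism preserves separating curves, which the paper does via Dehn-twist compatibility (Lemma \ref{lem:patterson}) and length-growth asymptotics under twisting in Proposition \ref{prop:isometry:isomorphism}(b) before Theorem \ref{thm:curvecomplex:rigidity} can be applied.
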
 

We can also use the Thurston infinitesimal norm to distinguish the topology of the underlying hyperbolic surfaces. 
\begin{theorem}\label{thm:topology}
 Let $S_{g,n}$ and $S_{g',n'}$ be two   orientable surfaces of finite type with negative Euler characteristic.
If  there exists a surjective $\R$-linear isometry
 \begin{equation*}
  \Phi:(T_X\TS,\|\bullet\|_{\mathrm{Th}})
  \to(T_Y\TSprime,\|\bullet\|_{\mathrm{Th}})
\end{equation*}
 for some $X\in \TS$ and $Y\in \TSprime$, then $(g,n)=(g',n')$.
\end{theorem}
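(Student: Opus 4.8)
The plan is to feed the linear isometry into the machinery developed for Theorem~\ref{thm:maintheorem1}, extract a simplicial isomorphism of curve complexes, invoke their rigidity, and then dispose of the finitely many low-complexity exceptions by hand.

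Passing to transposes, the hypothesis produces a bijective linear isometry $\Phi^*\colon (T^*_Y\TSprime,\|\bullet\|_{\mathrm{Th}})\to(T^*_X\TS,\|\bullet\|_{\mathrm{Th}})$ of the cotangent spaces with the dual (asymmetric) norms. Since a linear isometry carries the dual unit ball onto the dual unit ball and hence its boundary onto its boundary, Theorem~\ref{thm:convex:model} shows that $\Phi^*$ restricts to a homeomorphism $\PD_Y(\PML(S_{g',n'}))\to\PD_X(\PMLS)$, i.e.\ a homeomorphism $F\colon\PML(S_{g',n'})\to\PMLS$ of the two projective lamination spaces. In particular these spheres have the same dimension, so $6g'-6+2n'=6g-6+2n$, that is, $3g'+n'=3g+n$. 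I would then run the same analysis as in the proof of Theorem~\ref{thm:maintheorem1}: the projective classes of simple closed curves are singled out among points of $\PMLS$ by an intrinsic feature of the convex body $\PD_X(\PMLS)$ (its local non-smoothness / face structure at those points), and two such points span a common flat face of the convex body exactly when the corresponding curves are disjoint; because $F$ comes from a linear map it preserves the convex body, hence its faces, and therefore carries $\mathscr{C}^0(S_{g',n'})$ bijectively onto $\mathscr{C}^0(S_{g,n})$ and preserves disjointness in both directions. Thus $F$ induces a simplicial isomorphism $\C(S_{g',n'})\xrightarrow{\sim}\CS$ of the curve complexes.

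By the simplicial rigidity of curve complexes (Ivanov, Korkmaz, Luo), such an isomorphism forces $(g,n)=(g',n')$ unless the two curve complexes are too coarse to record the topology; by the classification of exceptional isomorphisms this happens only for the sporadic pairs $\{S_{1,1},S_{0,4}\}$ (edgeless curve complexes, abstractly the same countable set) and $\{S_{1,2},S_{0,5}\}$ (abstractly isomorphic graphs), the case $S_{0,3}$ being degenerate since $\TS$ is then a point. These are precisely the surfaces that require a separate treatment in the proof of Theorem~\ref{thm:maintheorem1}, and I would settle Theorem~\ref{thm:topology} for them by the same hands-on method: in each case both Teichm\"uller spaces have the same small dimension, $\PML$ is a sphere of dimension $1$ or $3$, and one must show that the two convex bodies of Theorem~\ref{thm:convex:model}, together with their Farey-ordered dense families of marked points $\{d\ell_\alpha\}$, cannot be matched by a linear map. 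I expect this last step to be the real obstacle. The natural route is to use the sharper conclusion of Theorem~\ref{thm:maintheorem1} --- namely that an isometry would have to intertwine the \emph{actual} length differentials, $d\ell_\mu\mapsto d\ell_{\phi\mu}$ for a bijection $\phi$ of $\ML$ preserving hyperbolic length --- and to derive a contradiction from the fact that the hyperbolic length functions on the once-punctured torus and on the four-punctured sphere satisfy incompatible constraints (for instance the respective McShane-type identities), building on the explicit description of the Thurston unit sphere of $\T(S_{1,1})$ in \cite{DLRT2016}; the pair $\{S_{1,2},S_{0,5}\}$ is treated analogously in dimension $4$.
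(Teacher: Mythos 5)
Your reduction of the general case to a simplicial isomorphism of curve complexes (facets of the dual unit ball correspond to simple closed curves by Theorem \ref{thm:mostly:facets}, common boundary points of facets detect disjointness as in Lemma \ref{lem:disjointness}, plus the dimension count) is exactly the paper's route, via Proposition \ref{prop:isometry:isomorphism}(a) and Theorem \ref{thm:curvecomplex:rigidity}. After that, however, there are two genuine gaps. First, your list of exceptional pairs is incomplete: the classification (Theorem \ref{thm:curvecomplex:rigidity}, Lemma \ref{lem:patterson}) also allows $\{S_{2,0},S_{0,6}\}$, which you never mention. Moreover, curve-complex rigidity alone cannot rule out $\{S_{1,2},S_{0,5}\}$ or $\{S_{2,0},S_{0,6}\}$; the paper excludes them not by a low-dimensional hands-on argument but by Proposition \ref{prop:isometry:isomorphism}(b): the induced isomorphism preserves separating curves (proved quantitatively, using Luo's compatibility with Dehn twists, the constancy of the ratio $\length_\alpha(X)/\length_{\widehat{\Phi}(\alpha)}(Y)$, and the asymptotics $\length_{D^n_\alpha\beta}\sim |n|\,\i(\alpha,\beta)\,\length_\alpha$), while every simple closed curve on $S_{0,5}$ and $S_{0,6}$ is separating. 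Some argument of this kind is indispensable in your scheme and is absent from it.

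Second, and more seriously, you have no actual argument for the pair $\{S_{0,4},S_{1,1}\}$, which is precisely the case your curve-complex step cannot see. The route you sketch --- invoking ``the sharper conclusion of Theorem \ref{thm:maintheorem1}'' --- is inapplicable: that theorem concerns an isometry between cotangent spaces at two points of the Teichm\"uller space of one fixed surface, so it gives no information about a putative isometry from $T_X\T(S_{0,4})$ to $T_Y\T(S_{1,1})$, which is exactly what must be excluded; and the appeal to McShane-type identities is speculation rather than a proof (you concede this is ``the real obstacle''). The paper's mechanism here is concrete: the isometry matches facets and, by Propositions \ref{prop:correspondence1} and \ref{prop:correspondence}, eventually intertwines the twist families $D^n_\alpha\beta$ and $D^{n+n_0}_{\widehat{\Phi}(\alpha)}\gamma$; equality of facet lengths together with Theorems \ref{thm:DLRT:facet:limit} and \ref{thm:facet:length:limit} forces $\length_\alpha(X)=\length_{\widehat{\Phi}(\alpha)}(Y)$ and the same for the twisted curves, while the intersection numbers differ ($\i(\alpha,\beta)=2$ on $S_{0,4}$ versus $\i(\widehat{\Phi}(\alpha),\gamma)=1$ on $S_{1,1}$), so the twisted lengths grow like $2n\length_\alpha(X)$ on one side and $n\length_{\widehat{\Phi}(\alpha)}(Y)$ on the other, a contradiction. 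Without this quantitative step (or a genuine substitute), the exceptional case, and hence the theorem, is not established by your proposal.
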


Based on the infinitesimal rigidities above, we obtain both the local rigidity and the global rigidity  of the Thurston metric, as well as an analogue of the Patterson's theorem.


\begin{theorem}[Local rigidity]\label{thm:localrigidity}
 Let $S_{g,n}$ be an orientable surface of finite type with negative Euler characteristic.  Let $U$ be a connected open set in $\TS$. Then any isometric embedding $(U,\dth)\to (\TS,\dth)$ is the restriction to $U$ of some element of $\MCGS$.
\end{theorem}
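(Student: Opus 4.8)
The plan is to bootstrap from the infinitesimal rigidity (Theorem~\ref{thm:maintheorem1}) together with the regularity statement (Theorem~\ref{thm:regularity}) to pass from an isometric embedding of an open set to a global isometry, and then invoke Walsh's global rigidity theorem. First I would observe that an isometric embedding $F\colon (U,\dth)\to(\TS,\dth)$ between open subsets of a Finsler manifold, where the Finsler norm is locally Lipschitz, is differentiable almost everywhere; the standard argument (as in Royden's and Walsh's treatments) shows that at any point of differentiability $X$, the derivative $d_XF\colon T_X\TS\to T_{F(X)}\TS$ is a linear surjective isometry for the Thurston norm. Here one uses that the Thurston metric is the length metric of the Thurston norm together with the fact that an isometric embedding sends geodesics to geodesics, so infinitesimally it must preserve the norm; surjectivity of $d_XF$ follows because $F$ is a local isometry onto its image and the dimensions agree.

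Next, by Theorem~\ref{thm:maintheorem1} applied to the dual map $(d_XF)^*$, at every point of differentiability $X\in U$ the derivative $d_XF$ is induced by an isometry $\phi_X\colon X\to F(X)$ of hyperbolic surfaces, hence $F(X)$ lies in the $\MCGS$-orbit of $X$ and in particular $F(X)=g_X\cdot X$ for some $g_X\in\MCGS$. The key point is then a rigidity/continuity argument: I want to show $g_X$ is locally constant on the (dense) set of differentiability points. This follows because the map $X\mapsto d_XF$ varies measurably, the mapping class $g_X$ is determined by the discrete datum of which curves are sent to which (equivalently, by the induced automorphism of the curve complex $\CS$), and a path of isometries $\phi_X$ realizing a continuously varying family $F(X)=g_X\cdot X$ forces $g_X$ to be constant along the path. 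Concretely: for two nearby differentiability points $X_1,X_2$ joined by a segment in $U$, the relation $\Phi^*(d_X\ell_\mu)=d_Y\ell_{\phi_X\mu}$ shows $g_{X_1}$ and $g_{X_2}$ induce the same automorphism of $\CS$ (they agree on the countably many simple closed curves because lengths vary continuously and $F$ is continuous), hence $g_{X_1}=g_{X_2}$ by the injectivity of $\MCGS\to\mathrm{Aut}(\CS)$. Thus there is a single $g\in\MCGS$ with $F=g$ on a dense subset of $U$, and by continuity $F=g|_U$.

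Finally, having identified $F$ with the restriction of $g\in\MCGS$ on a connected open set, the conclusion is immediate: $F$ extends to the global isometry $g$ of $(\TS,\dth)$, which is an element of $\MCGS$ as desired. (Alternatively one can skip the last appeal and argue directly: once $F=g$ on a dense set and $F$ is continuous, $F$ literally is the restriction of $g$ to $U$, so no separate global rigidity input is needed.)

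I expect the main obstacle to be the continuity/local-constancy step for $X\mapsto g_X$: one must rule out ``jumping'' of the mapping class at the measure-zero set of non-differentiability, and more subtly one must handle the possibility that different local isometries $\phi_X$ could a priori differ even when $d_XF$ is the same --- this is where uniqueness in Theorem~\ref{thm:maintheorem1} (the isometry $\phi$ is determined by $\Phi^*$, since it is determined by its action $\mu\mapsto\phi\mu$ on $\ML(S_{g,n})$, which is read off from $\Phi^*$) is essential. A secondary technical point is justifying a.e.\ differentiability and the passage from the metric isometry to an infinitesimal isometry of the norm, which is routine given Theorem~\ref{thm:regularity} but needs the Rademacher-type argument spelled out carefully for asymmetric Finsler norms.
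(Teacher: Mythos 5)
Your skeleton (regularity of the norm, then pointwise infinitesimal rigidity via Theorem~\ref{thm:maintheorem1} giving $F(X)=g_X\cdot X$, then local constancy of $g_X$, then density and continuity) is the same as the paper's, but the decisive step --- local constancy --- is asserted rather than proved, and the justification you offer does not work. Continuity of $F$ and of the length functions does not force the curve correspondences at two nearby differentiability points to coincide: distinct curves can have nearly equal lengths, and in your Rademacher-based setup $X\mapsto d_XF$ is only defined almost everywhere with no continuity, so you cannot read local constancy off the facet correspondence either. Moreover, the injectivity of $\MCGS\to\mathrm{Aut}(\CS)$ that you invoke is false precisely for $(g,n)\in\{(2,0),(1,1),(1,2),(0,4)\}$ (hyperelliptic-type involutions act trivially on curves); this is harmless for the final statement, since those elements also act trivially on $\TS$, but it shows the mechanism you appeal to is not the one doing the work. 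The genuine difficulty, which you name but do not resolve, is ruling out a jump of the mapping class across the locus of points with nontrivial stabiliser, where the element $g$ with $F(X)=gX$ is not unique: a priori $F$ could agree with $g_1$ on one side of such a hypersurface and with $g_2\neq g_1$ on the other, and a finite cover of a connected set by closed sets does not exclude this.

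The paper supplies exactly the missing ingredients. First, instead of Rademacher it invokes the Finsler Myers--Steenrod theorem of Matveev--Troyanov \cite[Theorems A and B]{MatveevTroyanov2017} (applicable because the Thurston norm is locally Lipschitz, Theorem~\ref{thm:regularity}), which gives that $F$ is $C^{1,1}_{loc}$ with norm-preserving surjective differential at \emph{every} point of $U$; this is also the justification you are missing for the claim that at a differentiability point the derivative is an isometry of the asymmetric norm --- neither Royden's nor Walsh's arguments provide it. Second, it restricts to the subset $\widehat U\subset U$ of points with minimal stabiliser, whose complement has real codimension at least two, so $\widehat U$ is dense and connected; on $\widehat U$ the proper discontinuity of the $\MCGS$-action yields neighbourhoods $U_0\ni X_0$ and $V_0\ni F(X_0)$ such that the only mapping class carrying $U_0$ to meet $V_0$ is $\phi_F(X_0)$, whence $\phi_F$ is locally constant on $\widehat U$, constant by connectedness, and equal to a single $g$ on all of $U$ by density and continuity. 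To repair your proof you would need to add this stabiliser stratification together with the proper-discontinuity (or an equivalent discreteness) argument; without it the local-constancy step remains a gap.
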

\begin{theorem}[Global rigidity]\label{thm:isometry:group}
 Let $S_{g,n}$ be an orientable surface of finite type with negative Euler characteristic.  Then every isometry of
  $\TS$ is an element of $\MCGS$. In particular,
  \begin{equation*}
    \Isom(\TS,\dth)=\left\{
    \begin{array}{ll}
    \MCGS/_{\Z_2}, &\text{if }  (g,n)\in\{(2,0), (1,1),(1,2)\},\\
    \MCGS/_{\Z_2\oplus\Z_2}, &\text{if }  (g,n)=(0,4),\\
      \MCGS, &\text{otherwise.}
    \end{array}
    \right.
  \end{equation*}
\end{theorem}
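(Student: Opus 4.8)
The strategy is to bootstrap from the infinitesimal rigidity (Theorem 1.5) and the local rigidity (Theorem 1.7) already in hand. First I would recall that $\dth$ is a complete, $\MCGS$-invariant, proper geodesic metric on $\TS$ whose isometry group $\Isom(\TS,\dth)$ is therefore a locally compact topological group acting on $\TS$; the quotient $\TS/\MCGS$ is the moduli space, which has finite diameter in no reasonable sense but does have finite covolume, so Farb--Weinberger's theorem \cite{FarbWeinberger2010} already guarantees that $\MCGS$ has finite index in $\Isom(\TS,\dth)$. The task is to promote ``finite index'' to ``equality'' (up to the listed hyperelliptic-type kernels), and to do so I would argue that any isometry $\Psi$ of $(\TS,\dth)$ agrees, locally near any point, with an element of $\MCGS$, and then globalize.

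The key step is the differentiability input. By Theorem 1.1 the Thurston norm function on $T\TS$ is locally Lipschitz, hence $\dth$ is a locally Lipschitz Finsler metric; a Rademacher-type argument (as used by Walsh \cite{Walsh2014} and in \cite{DLRT2016}) shows that an isometry $\Psi$ is differentiable almost everywhere, and at each point $X$ of differentiability the derivative $d_X\Psi: T_X\TS \to T_{\Psi(X)}\TS$ is a surjective $\R$-linear isometry for the Thurston infinitesimal norm, so its coadjoint $(d_X\Psi)^*$ satisfies the hypothesis of Theorem 1.5. Therefore Theorem 1.5 produces an isometry $\phi_X: \Psi(X) \to X$ of hyperbolic surfaces with $(d_X\Psi)^*(d_X\ell_\mu) = d_{\Psi(X)}\ell_{\phi_X\mu}$ for all $\mu$; equivalently $\phi_X$ records, through the marking, a well-defined element $h_X \in \MCGS$ with $h_X(X) = \Psi(X)$ and $d_X\Psi = d_X h_X$. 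The local rigidity Theorem 1.7 then applies: on a connected neighbourhood of $X$, $\Psi$ and $h_X$ are isometric embeddings into $(\TS,\dth)$ whose derivatives agree at $X$, and Theorem 1.7 forces $\Psi$ to coincide with an element of $\MCGS$ on that neighbourhood. Since the set of $h \in \MCGS$ agreeing with $\Psi$ on a nonempty open set is locally constant and $\TS$ is connected, $\Psi \in \MCGS$.

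For the explicit description of $\Isom(\TS,\dth)$ I would note that $\MCGS$ acts on $\TS$ with kernel exactly the group generated by the hyperelliptic involution in the cases $(g,n) \in \{(2,0),(1,1),(1,2)\}$ and by the two commuting hyperelliptic-type involutions in the case $(g,n)=(0,4)$, and faithfully otherwise; this is classical (the cited kernels are precisely those appearing for the action on Teichm\"uller space in all the earlier rigidity theorems). Combining the faithful action statement with the fact just proved that every isometry lies in $\MCGS$ yields the displayed formula. The main obstacle, and the place where one must be careful, is the a.e.-differentiability step together with the claim that the differential at a generic point is a \emph{linear} isometry of the (asymmetric, merely Lipschitz) Thurston norm rather than just a bi-Lipschitz map: one needs that $\dth$-isometries preserve the Finsler structure infinitesimally, which is not automatic for a nonsmooth Finsler metric and requires the Busemann-type argument that at a point of differentiability the first-order expansion of $\dth(\Psi(X),\Psi(X'))$ equals that of $\dth(X,X')$. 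Once that is secured, everything else is a formal consequence of Theorems 1.1, 1.5, 1.7 and the known structure of $\MCGS$ acting on $\TS$.
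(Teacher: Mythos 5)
Your conclusion is correct and the two ingredients you end with (local rigidity plus the kernel of the $\MCGS$-action) are exactly the ones the paper uses, but most of the middle of your argument is an unnecessary detour that in effect re-proves Theorem \ref{thm:localrigidity} rather than using it. A global isometry $\Psi$ of $(\TS,\dth)$ is already an isometric embedding of the connected open set $U=\TS$ into $(\TS,\dth)$, so Theorem \ref{thm:localrigidity} applies verbatim and gives $\Psi\in\MCGS$ in one step; there is no need for Farb--Weinberger, for a Rademacher-type almost-everywhere differentiability argument, for invoking Theorem \ref{thm:maintheorem1} at points of differentiability, for comparing derivatives with a candidate $h_X$, or for the locally-constant patching over $\widehat U$ --- all of that (in the form of the Matveev--Troyanov regularity theorem combined with Theorem \ref{thm:maintheorem1}) is precisely the content of the proof of Theorem \ref{thm:localrigidity}, which you are entitled to quote as a black box. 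In particular the issue you flag as ``the main obstacle'' (that an isometry of a merely locally Lipschitz Finsler metric need not a priori have norm-preserving differentials) is moot at this stage: it was the obstacle in the proof of the local rigidity theorem, where it is resolved via \cite{MatveevTroyanov2017}, not an obstacle in deducing global from local rigidity. The paper's proof is exactly the short version: the natural homomorphism $j:\MCGS\to\Isom(\TS,\dth)$ is surjective by Theorem \ref{thm:localrigidity} applied with $U=\TS$, and $\mathrm{Kernel}(j)$ is $\Z_2$ for $(g,n)\in\{(2,0),(1,1),(1,2)\}$, $\Z_2\oplus\Z_2$ for $(0,4)$, and trivial otherwise (Birman, Viro), which yields the displayed formula; your description of the kernel agrees with this, so your write-up is correct, just carrying redundant machinery.
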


\begin{theorem}[Topological rigidity]\label{thm:topology:global}
  Let $S_{g,n}$ and $S_{g',n'}$ be two   orientable surfaces of finite type with negative Euler characteristics. Then $(\TS,\dth)$ and $(\TSprime,\dth)$ are isometric if and only if $(g,n)=(g',n')$.
\end{theorem}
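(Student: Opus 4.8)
The plan is to reduce the global statement to the infinitesimal rigidity already established in Theorem \ref{thm:topology}. First I would prove the forward direction: suppose $F\colon(\TS,\dth)\to(\TSprime,\dth)$ is an isometry. The Thurston metric $\dth$ is not symmetric, so a little care is needed, but since $F$ is bijective and preserves $\dth$ (hence also its reverse $\overline{\dth}(X,Y):=\dth(Y,X)$ and the symmetrized metric), $F$ is in particular a homeomorphism of the underlying manifolds and a bi-Lipschitz map for the symmetrized distance. By Theorem \ref{thm:regularity} the Thurston norm varies locally Lipschitz over the tangent bundle, and $\dth$ is a Finsler metric induced by this norm; so a Rademacher-type argument shows $F$ is differentiable almost everywhere. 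At any point $X$ of differentiability the derivative $dF_X\colon(T_X\TS,\|\bullet\|_{\mathrm{Th}})\to(T_{F(X)}\TSprime,\|\bullet\|_{\mathrm{Th}})$ is a surjective $\R$-linear isometry (surjectivity because $F^{-1}$ is also an isometry and is differentiable a.e., so we may choose $X$ where both $F$ and $F^{-1}$ are differentiable, with $dF_X$ and $d(F^{-1})_{F(X)}$ mutually inverse). Then Theorem \ref{thm:topology} applies verbatim and forces $(g,n)=(g',n')$.

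The one technical point to handle carefully is the a.e.-differentiability of an isometry of an asymmetric Finsler metric. I would argue as follows. Fix a background smooth Riemannian (or Teichm\"uller, or Weil--Petersson) metric on $\TS$; by Theorem \ref{thm:regularity} and compactness of local pieces, on any relatively compact chart the Thurston norm is comparable to this background norm up to a fixed multiplicative constant, so $\dth$ is locally bi-Lipschitz to a Riemannian distance, and likewise for $\TSprime$. Hence $F$ is locally Lipschitz in the ordinary sense, and the classical Rademacher theorem gives differentiability $\mathcal L^{2\dim}$-almost everywhere. The set where $F^{-1}$ is also differentiable has full measure in $\TSprime$, and since $F$ is bi-Lipschitz it pushes/pulls null sets to null sets; therefore there is a point $X$ at which $F$ is differentiable, $F(X)$ is a point where $F^{-1}$ is differentiable, and the chain rule on the composition $F^{-1}\circ F=\mathrm{id}$ (valid at such a point) shows $d(F^{-1})_{F(X)}\circ dF_X=\mathrm{id}$, so $dF_X$ is a linear isomorphism. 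That it is a $\|\bullet\|_{\mathrm{Th}}$-isometry is immediate from $\dth(F(p),F(q))=\dth(p,q)$ by taking $q\to p$ along rays: the first-order expansion of $\dth$ in each direction $v$ is $\|v\|_{\mathrm{Th}}$ by definition of the Finsler structure, and this is preserved under $dF_X$.

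For the converse, if $(g,n)=(g',n')$ then $S_{g,n}$ and $S_{g',n'}$ are homeomorphic, any homeomorphism induces a mapping-class-group element, and $\MCGS$ acts on $\TS$ by $\dth$-isometries (lengths of curves are permuted, so the supremum in \eqref{eq:thurston:metric} is unchanged); this gives an isometry between the two Teichm\"uller spaces. The main obstacle is the differentiability argument of the previous paragraph: everything else is formal once Theorem \ref{thm:topology} is in hand. One should double-check that the asymmetry of $\dth$ does not obstruct the Rademacher step — it does not, because the symmetrization $\max(\dth(X,Y),\dth(Y,X))$ is a genuine metric that is still locally bi-Lipschitz to the background Riemannian distance, and an $F$ preserving $\dth$ preserves its symmetrization — and that the derivative recovers the \emph{asymmetric} norm, not merely its symmetrization, which is exactly what the directional first-order expansion of $\dth$ provides. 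With $dF_X$ a surjective $\R$-linear isometry $(T_X\TS,\|\bullet\|_{\mathrm{Th}})\to(T_{F(X)}\TSprime,\|\bullet\|_{\mathrm{Th}})$, Theorem \ref{thm:topology} yields $(g,n)=(g',n')$, completing the proof.
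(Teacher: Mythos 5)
Your proposal is correct, and its overall reduction is the same as the paper's: the paper disposes of this theorem in one line, ``this follows from Theorem \ref{thm:topology}'', where the step from a global isometry $F:(\TS,\dth)\to(\TSprime,\dth)$ to a surjective $\R$-linear isometry of tangent spaces is the one spelled out in the proof of Theorem \ref{thm:localrigidity}: by Theorem \ref{thm:regularity} the Thurston norm is locally Lipschitz, so the Finsler Myers--Steenrod theorem of Matveev--Troyanov applies and $F$ is $C^{1,1}_{loc}$ with norm-preserving differential at \emph{every} point. You instead obtain a differential at \emph{some} point by a Rademacher argument (local two-sided comparability of the Thurston norm with a background Riemannian norm, hence local bi-Lipschitz equivalence of $\dth$ with a Riemannian distance, a.e.\ differentiability of $F$ and $F^{-1}$, and the chain rule at a common point of differentiability), which indeed suffices to invoke Theorem \ref{thm:topology}. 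What the citation of Matveev--Troyanov buys is that the regularity and the norm-preservation come packaged together; what your route buys is a more elementary, self-contained argument, at the cost of two standard facts you should state explicitly rather than leave as ``by definition of the Finsler structure'': (i) that $\dth$ is the length metric of the Thurston norm (Thurston's theorem, as quoted after (\ref{eq:thurston:norm})), and (ii) the Busemann--Mayer-type first-order expansion $\lim_{t\to 0^{+}}\dth(X,X+tv)/t=\|v\|_{\mathrm{Th}}$ for a Finsler structure that is merely continuous (locally Lipschitz), together with the small bookkeeping that the $o(t)$ error in $F(X+tv)$ is harmless because $\dth$ is locally bounded above by a constant times the background distance. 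Also note that surjectivity of $dF_X$ needs the chain rule applied to both $F^{-1}\circ F$ and $F\circ F^{-1}$ (your ``mutually inverse'' phrasing covers this), and that the converse direction is immediate since $(g,n)=(g',n')$ makes the two Teichm\"uller spaces literally the same space, so no appeal to $\MCGS$ is even needed.
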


\begin{remark}
\begin{itemize}
 \item  Theorem \ref{thm:maintheorem1} and  Theorem \ref{thm:localrigidity}  were proved By Dumas-Lenzhen-Rafi-Tao \cite{DLRT2016} if $(g,n)=(1,1)$.
 \item Theorem \ref{thm:isometry:group} was proved Dumas-Lenzhen-Rafi-Tao \cite{DLRT2016} if $(g,n)=(1,1)$, and by Walsh \cite{Walsh2014} if $(g,n)\notin\{(0,4),(1,1),(1,2)\}$.
  \item  Theorem \ref{thm:topology:global} was first proved by Walsh \cite{Walsh2014} if $\{(g,n),(g'n')\}$ is not one of the following pairs:
  $ \{(0,4),(1,1)\}, \{(0,5),(1,2)\}, \{(0,6),(2,0)\}. $
  \item For surfaces of low complexity, Theorem \ref{thm:topology:global} is different from the case of Teichm\"uller metric. For the Teichm\"uller metric, Patterson \cite{Patterson1972} showed that $\TS$ and $\TSprime$ are isometric if and only if $(g,n)=(g',n')$ or $\{(g,n),(g'n')\}$ is one of the following pairs:
  $$ \{(0,4),(1,1)\}, \{(0,5),(1,2)\}, \{(0,6),(2,0)\}. $$
  \item Recently, Theorem \ref{thm:maintheorem1}-\ref{thm:topology:global} are proved independently by Huang-Ohshika-Papadopoulos \cite{HOP2021} using a differently method.
\end{itemize}
 
\end{remark}

\begin{remark}
  For more research about the Thuston metric, we refer to
    \cite{Thurston1998}, \cite{Liu2000}, \cite{PapadopoulosTheret2007}, \cite{ChoiRafi2007}, \cite{Theret2007}, \cite{PapadopoulosTheret2007a}, \cite{LenzhenRafiTao2012}, \cite{PapadopoulosTheret2012}, \cite{LPST2013}, \cite{Walsh2014}, \cite{PapadopoulosSu2014}, \cite{Theret2014} \cite{LenzhenRafiTao2015}, \cite{PapadopoulosSu2015}, \cite{PapadopoulosSu2016}, \cite{PapadopoulosYamada2017}, \cite{AlessandriniDisarlo2019}, \cite{HuangPapadopoulos2019},  \cite{DLRT2016}, and \cite{HOP2021}.
\end{remark}

 \subsection{Idea of proof}
 Our proof is based on the linear structures on $\MLS$ induced from the cotangent spaces of the Teichm\"uller space via Theorem \ref{thm:convex:model}. We show that the induced linear structures depends dramatically on the underlying hyperbolic surfaces (see Theorem \ref{thm:linearity:rigidity}).  By analysing the maximal flats (facets) in the unit sphere of the tangent space, we show that every surjective $\R$-linear isometry between tangent spaces induces an isomorphism of the curve complexes for non-exceptional surfaces (see Proposition \ref{prop:isometry:isomorphism}).  The linearity rigidity of linear structures on $\MLS$ and   the automorphism rigidity of the curve complex then enable us to prove Theorem  \ref{thm:maintheorem1} and Theorem \ref{thm:topology} in this case. For exceptional surfaces, we adapt the idea from \cite{DLRT2016}.  The proofs of Theorem \ref{thm:localrigidity}, Theorem \ref{thm:isometry:group} and Theorem  \ref{thm:topology:global}  are standard assuming Theorem \ref{thm:regularity} and Theorem \ref{thm:maintheorem1}.

\subsection*{Acknowledgements}
I am grateful to Kasra Rafi for sharing with me the question about the local rigidity of  Thurston metric. I thank Weixu Su for his comments. Finally, I would like to thank the referee for his/her comments and suggestions.  This work is supported by NSFC 11901241.

\section{Preliminaries}

\subsection{Teichm\"uller space and measured laminations} Continuing with the same notation as in the introduction,  let $S_{g,n}$ be an oriented surface of genus $g$ with $n$ punctures whose Euler characteristic is negative: $\chi(S_{g,n})=2-2g-n<0$.
A \textit{marked hyperbolic surface} is a pair $(X,f)$, where $X$ is a hyperbolic surface and $f:S_{g,n}\to X$ is an orientation-preserving homeomorphism. Two marked hyperbolic surfaces $(X,f)$ and $(X',f')$ are said to be \textit{equivalent} if there exists an isometry $X\to X'$ which is isotopic to $f'\circ f^{-1}$. The \textit{Teichm\"uller space} $\TS$ is the set of equivalence classes of marked hyperbolic surfaces. Sometimes,  we simply denote an equivalence class by $X$. With the topology induced by the Thurston metric (defined in (\ref{eq:thurston:metric})),   $\TS$ is homeomorphic to  $\R^{6g-6+2n}$.

A \textit{measured lamination} is a lamination on $S_{g,n}$ equipped with a transverse invariant measure.   Every measured lamination $\mu$ induces a functional over $\C^0(S_{g,n})$, the set of isotopy classes of simple closed curves on $S_{g,n}$, by associating to each $\alpha\in \C^0(S_{g,n})$ the \textit{intersection number} $\i(\mu,\alpha)$:
 \begin{equation*}
   \i(\mu,\alpha):=\inf_{\alpha'}\int_{\alpha'}d\mu
 \end{equation*}
 where $\alpha'$ ranges over all simple closed curves $\alpha'$ isotopic to $\alpha$.
 Two measured laminations are said to be \textit{equivalent} if they induce the same functional on  $\C^0(S_{g,n})$.  Let $\MLS$ be the space of equivalence classes of measured laminations equipped with the weak-* topology induced from the space of functionals over $\C^0(S_{g,n})$. Let $\PMLS:=\MLS/_{\R_{>0}}$ be the space of  projective classes of measured laminations. Thurston showed that $\MLS$ is homeomorphic to $\R^{6g-6+2n}$, and that the set of weighted simple closed curves is dense in $\MLS$ (see \cite{FLP} and \cite{CassonBleiler1988} for more details).

\subsection{Mapping class group action}
Let $\mathrm{Homeo}^\pm(S_{g,n})$ (resp.  $\mathrm{Homeo}^+(S_{g,n})$) be the group of homeomorphisms (resp. orientation-preserving homeomorphisms) of $S_{g,n}$. Let $\mathrm{Homeo}_0(S_{g,n})$ be the subgroup of homeomorphisms isotopic to the identity. Then the mapping class group $\MCG^+(S_{g,n})$ of $S_{g,n}$ is defined to be:
\begin{equation*}
  \MCG^+(S_{g,n})=\mathrm{Homeo}^+(S_{g,n})/
  \mathrm{Homeo}_0(S_{g,n})
\end{equation*}
and the extended mapping class group
$\MCGS$ is defined to be:
\begin{equation*}
  \MCGS=\mathrm{Homeo}^\pm(S_{g,n})/
  \mathrm{Homeo}_0(S_{g,n}).
\end{equation*}
The extended mapping class group acts on $\TS$ as follows. Each orientation-preserving mapping class $\phi\in\MCG^+{S_{g,n}}$ acts on $\TS$ by pre-composition:
\begin{equation*}
  \phi\cdot (X,f):=(X,f\circ \phi^{-1}).
\end{equation*}
We now consider the action of orientation-reversing homeomorphisms. For every hyperbolic surface $X$, let $\overline{X}$ be $X$ with the opposite orientation. Let $j_X:X\to\overline X$ be the identity map. In particular, $j_X$ is an orientation-reversing isometry. Now the image of $(X,f)$ under $\psi$ is defined to be:
\begin{equation*}
  \psi\cdot (X,f):=(\overline X,j_X\circ f\circ \psi^{-1}).
\end{equation*} 

 By definition, we see that $\MCGS$ acts on $(\TS,\dth)$ by isometries.

 \subsection{Stretch lines}
 A geodesic lamination on $X\in \TS$ is a lamination whose leaves are hyperbolic geodesics. A geodesic lamination is said to be \textit{maximal} if each component of $X-\lambda$ is an ideal hyperbolic triangle.

 \begin{theorem}[\cite{Thurston1998}, Corollary 4.2]
     For any complete hyperbolic structure $X$ of finite area on a surface $S$, for any maximal geodesic lamination $\lambda$ not all of whose leaves go to a cusp at both ends there is a new hyperbolic structure
\[
\operatorname{stretch}(X, \lambda, t)
\]
depending analytically on $t>0$ such that
\begin{enumerate}[(a)]
  \item the identity map is Lipschitz with Lipschitz constant
\[
L(X, \operatorname{stretch}(X, \lambda, t))= \exp (t)
\]

\item the identity map exactly expands arc length of $\lambda$ by the constant factor $\exp (t)$, while for each point on $X-\lambda$   the Lipschitz constant is strictly less than $\exp(t)$.
\end{enumerate}
 \end{theorem}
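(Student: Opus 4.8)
The plan is to construct the hyperbolic structure $\operatorname{stretch}(X,\lambda,t)$ and the stretch map simultaneously, ideal triangle by ideal triangle, by means of Thurston's horocyclic foliation. Since $\lambda$ is maximal and $X$ has finite area, $X\setminus\lambda$ is a finite disjoint union of ideal triangles. Each ideal triangle carries a canonical partial \emph{horocyclic foliation}: near each of its three ideal vertices the horocycles centred there sweep out a fan, the three outermost horocyclic arcs are pairwise tangent, and the three fans exhaust the triangle apart from a central region bounded by those arcs. Assembling these pieces yields a measured foliation $F_\lambda(X)$ on $X$ transverse to $\lambda$, and the first input I would use is Thurston's parametrisation theorem: $X\mapsto F_\lambda(X)$ is a real-analytic homeomorphism from $\TS$ onto the space of ``admissible'' measured foliations transverse to $\lambda$. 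The hypothesis that not every leaf of $\lambda$ runs from a cusp to a cusp is exactly what makes this target space positive-dimensional: when every leaf is cusp-to-cusp (as on $S_{0,3}$) the Teichm\"uller space is a point and there is nothing to deform.

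Granting this, I would \emph{define} $\operatorname{stretch}(X,\lambda,t)$ to be the hyperbolic surface whose horocyclic foliation with respect to $\lambda$ is $e^{t}F_\lambda(X)$; since $t\mapsto e^{t}F_\lambda(X)$ is real-analytic and the parametrisation is a real-analytic homeomorphism, $\operatorname{stretch}(X,\lambda,t)$ depends real-analytically on $t>0$. Both $X$ and $\operatorname{stretch}(X,\lambda,t)$ are then obtained by gluing the same finite collection of ideal triangles along $\lambda$ by shear parameters that are explicit real-analytic functions of $t$, and there is a natural (identity on the underlying surface) homeomorphism $\Phi_{t}\colon X\to\operatorname{stretch}(X,\lambda,t)$ taking $\lambda$ to $\lambda$. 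Unwinding how the horocyclic foliation records hyperbolic distance along the leaves of $\lambda$ shows that $\Phi_{t}$ expands arc length along $\lambda$ by exactly the factor $e^{t}$ — the first assertion of (b) — and in particular $L(X,\operatorname{stretch}(X,\lambda,t))\ge e^{t}$.

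The remaining and essential point is the reverse Lipschitz inequality together with the strict bound in (b). I would realise $\Phi_{t}$ triangle by triangle, using the explicit description of the map on each ideal triangle coming from the rescaled horocyclic data, and compute its local dilatation in horocyclic coordinates adapted to an ideal vertex; the content of the construction is that this dilatation is everywhere $\le e^{t}$ and equals $e^{t}$ only at points of $\lambda$. Since $\lambda$ has measure zero, this yields $L(X,\operatorname{stretch}(X,\lambda,t))=e^{t}$, which completes (a), together with the pointwise strict inequality on $X\setminus\lambda$ asserted in (b). The main obstacle is precisely this last step: one must pin down the exact form of $\Phi_{t}$ on each ideal triangle, including over the central region where the three horocyclic fans meet; verify that the rescaled shears genuinely assemble into a complete, finite-area hyperbolic metric, so that Thurston's parametrisation applies and the reglued triangles fit together isometrically; and then carry the local dilatation estimate through carefully enough to see that the value $e^{t}$ is attained only along $\lambda$ and nowhere on $X\setminus\lambda$.
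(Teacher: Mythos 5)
This statement is not proved in the paper at all: it is quoted verbatim from Thurston (\cite{Thurston1998}, Corollary 4.2) and used as a black box, so there is no internal argument to compare yours against. Your outline does follow Thurston's original route --- parametrise hyperbolic structures by the horocyclic (cataclysm) foliation transverse to $\lambda$, define $\operatorname{stretch}(X,\lambda,t)$ as the structure whose horocyclic foliation is $e^{t}F_\lambda(X)$, and then estimate the dilatation of the induced map on the complementary ideal triangles --- and that is the correct strategy.

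Two points, however, are genuine gaps rather than routine details. First, your reading of the hypothesis is wrong: it is not true that ``all leaves cusp-to-cusp'' forces the deformation space to be a point. An ideal triangulation of $S_{1,1}$ (or of $S_{0,4}$, or of any punctured surface) is a maximal geodesic lamination all of whose leaves go into a cusp at both ends, yet $\T(S_{1,1})$ is two-dimensional and the parametrisation by transverse data (equivalently, shear coordinates) is perfectly good there; $S_{0,3}$ is the exception, not the rule. So the hypothesis cannot be disposed of by a dimension count; its role must be located inside the construction itself (it provides leaves of $\lambda$ that do not escape into the cusps --- equivalently a nonempty compactly supported measured sublamination --- which is what the strictness assertions and the geodesic property of stretch lines ultimately rest on). Second, the step you defer is essentially the entire content of the corollary. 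The canonical $e^{t}$-stretch of a single ideal triangle, anchored at the tangency points of its central non-foliated region, does \emph{not} glue: on an edge shared by two triangles the two anchor points differ by the shear, so the two edge maps $u\mapsto e^{t}(u-p_i)+p_i$ disagree unless that shear vanishes. Thurston's map has to be assembled globally, with the translation parts along the leaves of $\lambda$ dictated by the rescaled shears, and one must then check that it is well defined and continuous across the non-foliated central regions, that the rescaled data really yields a complete finite-area structure, and that the pointwise dilatation is $\le e^{t}$ with equality exactly on $\lambda$; analyticity in $t$ also needs an argument beyond the parametrisation theorem, which as stated gives only a homeomorphism. As written, your proposal is a faithful plan for Thurston's proof, but until these steps are carried out it does not establish (a) or (b).
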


 A direct consequence is the following infinitesimal version.
 \begin{corollary}\label{cor:infinitesimal}
   Let $v\in T_X\TS$ be the vector tangent to the stretch line $\mathrm{stretch}(X,\lambda,t)$. Then
   \begin{equation*}
     (d_X \log\ell_\mu)(v)\leq 1, \forall \mu\in\MLS
   \end{equation*}
   and
   \begin{equation*}
     (d_X \log\ell_\mu)(v)= 1\iff \mathrm{supp}\mu\subset\lambda
   \end{equation*}
   where $\mathrm{supp} \mu$ represents the supporting lamination of $\mu$.
 \end{corollary}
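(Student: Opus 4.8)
The plan is to read both parts off Thurston's two Lipschitz estimates, supplemented by the explicit local structure of the stretch map on the complementary ideal triangles. Write $X_t:=\mathrm{stretch}(X,\lambda,t)$ and let $\varphi_t\colon X\to X_t$ be the identity map of the underlying surface, so that $v=\frac{d}{dt}\big|_{t=0}X_t$ and, by the chain rule, $(d_X\log\ell_\mu)(v)=\frac{d}{dt}\big|_{t=0}\log\ell_\mu(X_t)$. By part~(a) the hyperbolic length elements satisfy $ds_{X_t}\le e^{t}\,ds_X$, with equality at $t=0$. Hence, integrating the $X_t$-length element over the $X$-geodesic representative $\mu^{\ast}$ of $\mu$ and using that the $X_t$-geodesic representative is shortest in its isotopy class,
\[
   \ell_\mu(X_t)\ \le\ \int_{\mathrm{supp}\,\mu}ds_{X_t}\,d\mu\ \le\ e^{t}\,\ell_\mu(X),
\]
and all three expressions agree at $t=0$. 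Differentiating at $t=0$ gives $(d_X\log\ell_\mu)(v)\le 1$, which is the first assertion.

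If $\mathrm{supp}\,\mu\subset\lambda$, then by part~(b) the map $\varphi_t$ multiplies arc length along $\lambda$, and hence along $\mathrm{supp}\,\mu$, exactly by $e^{t}$, while the transverse measure of $\mu$ is unchanged; therefore $\ell_\mu(X_t)=e^{t}\ell_\mu(X)$ for all $t$, so $(d_X\log\ell_\mu)(v)=1$. This proves the implication ``$\mathrm{supp}\,\mu\subset\lambda\Rightarrow$ equality''.

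For the converse it suffices to prove $(d_X\log\ell_\mu)(v)<1$ whenever $\mathrm{supp}\,\mu\not\subset\lambda$. In that case $U:=\mathrm{supp}\,\mu\cap(X-\lambda)$ is a nonempty open subset of $\mathrm{supp}\,\mu$ and therefore carries positive $\mu$-mass; moreover, since $\lambda$ is maximal, $U$ lies in the union of the ideal triangles comprising $X-\lambda$. Differentiating the displayed inequalities at $t=0$ (differentiating under the integral sign, which is routine since the integrand is smooth in $t$ and $\mathrm{supp}\,\mu$ is compact) yields
\[
   \frac{d}{dt}\Big|_{t=0}\ell_\mu(X_t)\ \le\ \int_{\mathrm{supp}\,\mu}\Big(\frac{d}{dt}\Big|_{t=0}ds_{X_t}\Big)\,d\mu,
\]
and by part~(a) the integrand is everywhere $\le ds_X$. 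It therefore suffices to show that this integrand is \emph{strictly} less than $ds_X$ at every point of $X-\lambda$: granting that, the positive-$\mu$-mass set $U$ forces the last integral to be strictly less than $\int_{\mathrm{supp}\,\mu}ds_X\,d\mu=\ell_\mu(X)$, and $(d_X\log\ell_\mu)(v)<1$ follows.

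The strict infinitesimal contraction on $X-\lambda$ is the one point where parts~(a)--(b) do not obviously suffice, and I expect it to be the main obstacle: part~(b) only asserts that the local Lipschitz constant of $\varphi_t$ on $X-\lambda$ is $<e^{t}$ for each fixed $t>0$, which a priori is compatible with the infinitesimal rate at $t=0$ being $1$. To upgrade this I would use Thurston's explicit description of the stretch map on a single ideal triangle in terms of its horocyclic foliation, from which the infinitesimal log-expansion rate at each interior point of the triangle is manifestly $<1$ in every direction. (Alternatively, one may invoke the known variation formula for length functions along stretch lines in terms of the horocyclic foliation, which delivers the whole $\le1$/$=1$/$<1$ trichotomy directly; the triangle estimate above is the self-contained route.)
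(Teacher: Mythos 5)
Your first two steps are fine and match the reading of Thurston's Corollary~4.2 that the paper itself intends: $(d_X\log\ell_\mu)(v)\le 1$ follows from the $e^{t}$--Lipschitz property, and $\mathrm{supp}\,\mu\subset\lambda$ forces $\ell_\mu(X_t)=e^{t}\ell_\mu(X)$ because the stretch map carries the geodesic realization of $\lambda$ on $X$ to the geodesic realization on $X_t$, multiplying arc length by exactly $e^{t}$ (a point worth stating explicitly, since otherwise you only get an upper bound for the geodesic length on $X_t$). The genuine problem is the converse, which you correctly single out as the crux --- but the patch you propose rests on a false claim. It is \emph{not} true that the stretch map contracts infinitesimally at rate $<1$ in every direction at every point of $X-\lambda$. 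In a foliated spike of a complementary ideal triangle, take upper half--plane coordinates in which the spike is $\{0\le x\le 1,\ y\ge 1\}$ and the two sides $x=0$, $x=1$ are leaves of $\lambda$; Thurston's map preserves the horocyclic arcs $y=\mathrm{const}$ and their orthogonal trajectories and stretches the sides by $e^{t}$, so it is $(x,y)\mapsto (x,y^{e^{t}})$. Its differential multiplies the hyperbolic norm of $\partial_y$ by exactly $e^{t}$ at \emph{every} point of the spike, so in the direction parallel to the edges the infinitesimal log--expansion rate is exactly $1$ (and the pointwise Lipschitz constant is exactly $e^{t}$) at interior points off $\lambda$; only the horocyclic direction is contracted, and only the central non-foliated regions are uniformly non-expanded. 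Consequently ``$U$ has positive $\mu$-mass'' does not by itself make your integral drop strictly below $\ell_\mu(X)$: deep in a spike the leaves of $\mathrm{supp}\,\mu$ are nearly parallel to the edges, precisely the direction stretched at rate $1$.

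What a correct completion needs is a \emph{directional} estimate along the leaves rather than a pointwise one: a leaf of $\mathrm{supp}\,\mu$ not contained in $\lambda$ must, in each complementary triangle it crosses, either pass through a central region or cross a spike from side to side, and near the ``top'' of such a crossing its tangent is close to the horocyclic direction, where the infinitesimal rate is strictly less than $1$; one must then show that such transversal configurations carry a definite amount of the $ds\times d\mu$ mass (compactness of $\mathrm{supp}\,\mu$) to conclude $(d_X\log\ell_\mu)(v)<1$. Alternatively one can avoid the triangle-by-triangle analysis altogether and obtain the equality case from the horocyclic-foliation description of stretch lines, or simply cite where the characterization of the maximally stretched laminations along a stretch ray is established (Thurston's construction in \cite{Thurston1998}, Th\'eret's work, or the discussion in \cite{DLRT2016}). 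Note also that your own (correct) observation --- that clause (b) for fixed $t>0$ does not control the derivative at $t=0$ --- applies equally to the paper, which offers no proof beyond calling the corollary a direct consequence; the statement is true, but the decisive step is exactly the one your proposal asserts as ``manifest,'' and in that form it fails.
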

 Using stretch lines, Thurston (\cite[Theorem 8.5]{Thurston1998}) showed that for every pair of points in $\TS$, there exists a geodesic with respect to $\dth$, which is a concatenation of stretch lines. In general, such geodesics are not unique. An interesting question is to describe the set of the union of all geodesics between two points, see \cite{DLRT2016} for the case of punctured torus.

\subsection{Curve complex}
 For $(g,n)\neq (0,4),(1,1)$,  the \textit{curve complex} $\C(S_{g,n})$ is a complex whose vertices are isotopy classes of simple closed curves on  $S_{g,n}$ and  whose $k$-simplices are collections of $k+1$  isotopy classes of simple closed curves on $S_{g,n}$ which can be realized disjointly.   For our purpose, we need the following connectedness and rigidity of  $\C(S_{g,n})$.
 \begin{theorem}[\cite{Harvey1981}, see also \cite{Putman2008}]\label{thm:connectedness}
   $\C(S_{g,n})$ is connected.
 \end{theorem}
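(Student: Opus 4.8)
The plan is to join any two vertices $\ap,\bt$ of $\CS$ by an edge-path, arguing by induction on the geometric intersection number $\i(\ap,\bt)$ (the case $S_{g,n}=S_{0,3}$, where there are no essential non-peripheral curves, being vacuous). If $\i(\ap,\bt)=0$ then $\ap$ and $\bt$ have disjoint representatives, so they coincide or span an edge. If $\i(\ap,\bt)=1$, a regular neighbourhood of $\ap\cup\bt$ is a one-holed torus $N$; since $\chi(S_{g,n})<0$, $S_{g,n}\neq S_{0,3}$, and $S_{g,n}\neq S_{1,1}$, one has $N\subsetneq S_{g,n}$ with $\partial N$ essential and non-peripheral, so $\partial N$ is a vertex disjoint from both $\ap$ and $\bt$ and $d(\ap,\bt)\leq2$.

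For the inductive step, take $\i(\ap,\bt)=k\geq2$ and realize $\ap$, $\bt$ in minimal position (no bigons). Then $\bt\setminus\ap$ is a union of $k$ embedded arcs with endpoints on $\ap$; fix one, say $a$, with distinct endpoints $p,q$ cutting $\ap$ into subarcs $\ap_1,\ap_2$. Smoothing the two corners of $\ap_1\cup a$ and of $\ap_2\cup a$ and then pushing the results slightly off $\ap$ (legitimately also off $\bt$, since $a\subset\bt$) produces simple closed curves $\gm_1,\gm_2$, disjoint from $\ap$, with $\gm_i\cap\bt\subseteq(\ap\cap\bt)\setminus\{p,q\}$; hence $\i(\gm_i,\bt)\leq k-2<k$. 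A regular neighbourhood of $\ap\cup a$ is a pair of pants with boundary isotopic to $\{\gm_1,\gm_2,\ap\}$, and from the pants relation one checks that $\gm_1,\gm_2$ cannot both be null-homotopic, nor can one be null-homotopic and the other peripheral, without forcing $\ap$ to be inessential or peripheral. The one remaining configuration to exclude is that $\gm_1$ and $\gm_2$ are both puncture-peripheral; using $S_{g,n}\notin\{S_{0,4},S_{1,1}\}$ together with $\ap$ essential and non-peripheral — via an Euler characteristic count for the subsurface cut off by $\ap$, or by replacing $a$ with a suitable alternative surgery arc — this cannot occur. Choosing $\gm\in\{\gm_1,\gm_2\}$ essential and non-peripheral, $\gm$ is a vertex with $\i(\gm,\ap)=0$ and $\i(\gm,\bt)<k$; the induction hypothesis joins $\gm$ to $\bt$, and the edge (or equality) $\ap$--$\gm$ completes the path.

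I expect the only genuine obstacle to be the essentiality of the surgered curves $\gm_1,\gm_2$; this is precisely the point at which the exceptional surfaces must be discarded, since on $S_{0,4}$ and $S_{1,1}$ any two distinct essential non-peripheral curves intersect and $\CS$ is then a totally disconnected set of isolated vertices. A more streamlined alternative, avoiding the surgery bookkeeping, is Putman's connectivity criterion applied to the action of $\MCGS$ on $\CS$: it suffices to exhibit, for a finite generating set consisting of Dehn twists (and half-twists in the planar case), a path in $\CS$ from a fixed reference vertex to each of its images under the generators, together with a path between reference vertices of the finitely many topological types of simple closed curves.
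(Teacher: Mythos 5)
The paper offers no proof of this statement---it is quoted from Harvey (see also Putman)---so your argument must stand on its own, and as written the inductive step has a genuine gap. Your surgery silently assumes that the arc $a$ of $\beta\setminus\alpha$ leaves $\alpha$ at $p$ and returns to $\alpha$ at $q$ on the \emph{same side} of $\alpha$. Only in that case can the smoothed curves $\alpha_1\cup a$ and $\alpha_2\cup a$ be isotoped off $\alpha$, and only in that case is a regular neighbourhood of $\alpha\cup a$ a pair of pants. If $a$ approaches $\alpha$ from opposite sides at its two endpoints (equivalently, the crossings at $p$ and $q$ have the same sign), then each smoothed curve has mod $2$ intersection number $1$ with $\alpha$---push $\alpha$ to a nearby parallel copy and count how many times $a$ crosses it---so $i(\gamma_j,\alpha)=1$ rather than $0$, and a neighbourhood of $\alpha\cup a$ is a one-holed torus, not a pair of pants. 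Nor can you always escape this by choosing a different arc: when all points of $\alpha\cap\beta$ carry the same sign (for instance when the algebraic and geometric intersection numbers of $\alpha$ and $\beta$ coincide, which already occurs for curves meeting twice with the same sign on a closed genus two surface), \emph{every} arc of $\beta\setminus\alpha$ crosses from one side of $\alpha$ to the other, and your construction never produces a curve disjoint from $\alpha$.

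The standard repair is to split the inductive step into two cases, as in the classical surgery proof. If two intersection points that are consecutive along $\beta$ have opposite signs, perform exactly your surgery and proceed as you do. If they have the same sign, surger instead to a single curve $\gamma$ made of a parallel copy of $a$ together with an arc running alongside $\alpha$ and crossing it exactly once; then $i(\gamma,\alpha)=1$ and $i(\gamma,\beta)<i(\alpha,\beta)$, and you conclude with your own $i=1$ step, $d(\alpha,\gamma)\le 2$, before applying the induction hypothesis to the pair $(\gamma,\beta)$. With this case added (and with your essentiality discussion made precise, which is routine once $3g-3+n\ge 2$), the argument becomes the standard one. Two minor remarks: your closing alternative via Putman's connectivity criterion is only a sketch, not a proof; and the paper later uses modified complexes $\C(S_{0,4})$ and $\C(S_{1,1})$ (Farey graphs), whose connectedness would need a separate elementary argument, although the paper only invokes connectedness for non-exceptional surfaces.
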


 \begin{theorem}[\cite{Ivanov1997}, \cite{Korkmaz1999}, \cite{Luo2000}]
 \label{thm:curvecomplex:rigidity}
 (a) Suppose that $\{(g,n),(g',n')\}$ is not one of the following three pairs:
$
  \{(0,4),(1,1)\}, \{(0,5),(1,2)\},  \{(0,6),(2,0)\}.
$
then $\C(S_{g,n})$ and $\C(S_{g',n'})$ are isomorphic if and only if $(g,n)=(g',n')$.

 (b) Suppose that  $S_{g,n}$ is not a sphere with four or fewer punctures, nor a torus with two or fewer punctures. Then every automorphism of $\C(S_{g,n})$ is an element of $\MCGS$, and \begin{equation*}
         \mathrm{Aut}(\C(S_{g,n})):=\left\{
         \begin{array}{ll}
           \MCGS/_{\Z_2}, & \text{if } (g,n)=(2,0) \\
           \MCGS, & \text{otherwise}
         \end{array}
         \right.
       \end{equation*}
 Moreover, any automorphism of $\C(S_{1,2})$  preserving the set of vertices represented by separating curves is
induced by an extended mapping class.
 \end{theorem}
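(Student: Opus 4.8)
The plan is to follow the combinatorial program initiated by Ivanov and extended to the low-complexity cases by Korkmaz and Luo, whose guiding principle is that enough of the topology of $S_{g,n}$ is encoded in the abstract simplicial complex $\C(S_{g,n})$ that every simplicial isomorphism is realized by a homeomorphism. Two ingredients are essentially formal. First, a homeomorphism $S_{g,n}\to S_{g',n'}$ induces a simplicial isomorphism of curve complexes, which is the ``if'' direction of (a). Second, by the Alexander method a mapping class fixing the isotopy class of every simple closed curve is trivial, the one exception being the hyperelliptic involution of $S_{2,0}$, which fixes every isotopy class because $S_{2,0}$ carries a single topological type of non-separating curve and a single type of separating curve, each representable symmetrically. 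This yields injectivity of $\MCGS\to\mathrm{Aut}(\C(S_{g,n}))$ off $(g,n)=(2,0)$, with kernel exactly $\Z_2$ when $(g,n)=(2,0)$. So it remains to prove: (i) a simplicial isomorphism $\C(S_{g,n})\cong\C(S_{g',n'})$ forces $(g,n)=(g',n')$ unless the pair is one of the three exceptions; and (ii) every automorphism of $\C(S_{g,n})$ is geometric under the stated hypotheses.

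The heart is a combinatorial recognition of topological type. For a simplex $\sg\subset\C(S_{g,n})$, that is, a multicurve, the link $\mathrm{Lk}(\sg)$ is isomorphic to the simplicial join of the curve complexes of the components of the cut surface $S_{g,n}\setminus\sg$; with the customary care for pieces of complexity $\leq 1$ (whose curve complexes are empty, a single point, or the Farey graph) one sees that $\mathrm{Lk}(\sg)$ fails to be a nontrivial join precisely when $S_{g,n}\setminus\sg$ is connected. Since $\dim\C(S_{g,n})=3g-4+n$ whenever $3g-3+n\geq 2$, a maximal simplex has $3g-3+n$ vertices, and for a simplex $\sg$ of size $k$ with connected complement the identity $\chi(S_{g,n}\setminus\sg)=\chi(S_{g,n})$ gives $g=k+(\text{genus of }S_{g,n}\setminus\sg)\geq k$, with $k=g$ realized by a handle system; hence $g$, and then $n$, are recovered from $\C(S_{g,n})$, which proves (i). The failures of this recognition are exactly the sporadic low-complexity coincidences $\C(S_{0,4})\cong\C(S_{1,1})$ (both the Farey graph), $\C(S_{0,5})\cong\C(S_{1,2})$, and $\C(S_{0,6})\cong\C(S_{2,0})$ coming from hyperelliptic-type double branched covers, which are precisely the three excluded pairs. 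The same argument shows that any automorphism $\phi$ of $\C(S_{g,n})$ preserves the topological type of every simplex; in particular it permutes pants decompositions and preserves the elementary (flip) moves between adjacent ones.

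For (ii) I would induct on the complexity $3g-3+n$. Composing $\phi$ with a suitable mapping class, I may assume it fixes a vertex $\ap$ --- non-separating if $g\geq1$, otherwise one cutting off a pair of pants. Then $\phi$ restricts to an automorphism of $\mathrm{Lk}(\ap)$, which by the join description is (the join of an empty complex with) the curve complex of a surface of strictly smaller complexity, e.g.\ $\C(S_{g-1,n+2})$ in the non-separating case; by the inductive hypothesis this restriction is realized by a mapping class of the cut surface, so after a further correction $\phi$ is the identity on $\mathrm{Lk}(\ap)$, hence on the whole star of $\ap$. It then remains to show $\phi$ fixes every curve $\bt$ crossing $\ap$: such a $\bt$ is pinned down by its disjointness pattern with the now-fixed vertices of $\mathrm{Lk}(\ap)$ together with the Farey-graph structure of the curve complex $\C(S_{1,1})$ or $\C(S_{0,4})$ arising in the complement of a suitable submulticurve disjoint from $\bt$ --- a transport step that leans on connectedness of the curve complexes of subsurfaces (Theorem \ref{thm:connectedness}). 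The base cases are $S_{0,5}$ and, for the surfaces such as $S_{2,0}$ that reduce to it through a non-separating curve, $S_{1,2}$; both need direct combinatorial analysis. For $S_{1,2}$ the automorphism group is genuinely larger than $\MCGS$ --- via $\C(S_{1,2})\cong\C(S_{0,5})$, under which $\MCG^\pm(S_{1,2})$ has finite index in $\MCG^\pm(S_{0,5})$ as the stabilizer of the extra structure --- so one must assume $\phi$ preserves the set of separating curves (a distinguished, combinatorially recognizable subset once it is known to be distinguished), which is the ``moreover'' clause and which holds automatically when $S_{1,2}$ occurs as a link inside a larger surface.

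The two points I expect to be hardest are: making the join-decomposition recognition of topological types uniform over the degenerate small pieces, since the recovery of $(g,n)$ and the preservation of all topological types rest entirely on it; and the transport step in (ii) identifying the curves crossing $\ap$, where the fine combinatorics of the Farey graph and the connectivity of subsurface curve complexes must be used in full, and where the base cases $S_{0,5}$ and $S_{1,2}$ (and the cases $S_{0,6}$, $S_{2,0}$ derived from them) require their own dedicated arguments.
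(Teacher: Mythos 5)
You should first note that the paper does not prove this statement at all: it is imported verbatim from Ivanov, Korkmaz and Luo, so there is no internal argument to compare with, and what you have written is an outline of the known proof program from those papers. The architecture you describe (homeomorphisms give the easy direction; the kernel of $\MCGS\to\mathrm{Aut}(\C(S_{g,n}))$; combinatorial recognition of topological type; induction on complexity with low-complexity base cases; the separating-curve hypothesis for $S_{1,2}$ coming from $\C(S_{1,2})\cong\C(S_{0,5})$) is indeed the right skeleton. But as a proof it has genuine gaps at exactly the load-bearing points. The central recognition criterion is false as stated: the link of a simplex $\sigma$ fails to be a nontrivial join not only when $S_{g,n}\setminus\sigma$ is connected, but whenever all complementary components except one are pairs of pants (those contribute empty complexes to the join). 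For instance, a curve on a genus-zero surface cutting off a pair of pants has disconnected complement, yet its link is the non-join curve complex of the other piece; with your criterion the ``maximal simplex with connected complement'' would then have positive size on genus-zero surfaces, and the recovery of $g$, hence of $(g,n)$ and of topological types of simplices, collapses. The actual arguments of Ivanov--Korkmaz--Luo recognize topological type by a more careful analysis (dimensions of links, adjacency of maximal simplices/pants decompositions, and the combinatorial characterization of pairs with intersection number one or two), and that finer input is precisely what your sketch postpones.

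The transport step in (ii) is also not a proof, and its naive reading is wrong: if $\beta$ crosses $\alpha$, then every twist $D_\alpha^k\beta$ has \emph{exactly} the same disjointness pattern with every vertex of $\mathrm{Lk}(\alpha)$ (a curve $\gamma$ disjoint from $\alpha$ is fixed by $D_\alpha^k$, so $\i(D_\alpha^k\beta,\gamma)=\i(\beta,\gamma)$), so the fixed star of $\alpha$ cannot ``pin down'' $\beta$; one needs the characterization of intersection-one (resp.\ intersection-two) pairs and an Alexander-method argument, which is where the real work in the cited papers lies, together with the dedicated analyses of the base cases $S_{0,5}$, $S_{1,2}$, $S_{0,6}$, $S_{2,0}$ that you only name. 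A smaller inaccuracy: your reason for the hyperelliptic involution of $S_{2,0}$ acting trivially on $\C(S_{2,0})$ (``a single topological type of curve'') is not a valid argument; the standard justification is that this involution is central in the mapping class group (equivalently, every simple closed curve has a symmetric representative), and one should also recall that $S_{1,1}$, $S_{1,2}$, $S_{0,4}$ carry analogous involutions, which is why they are excluded from (b) and why the kernel computation in the paper's Theorem \ref{thm:isometry:group} has those extra cases. In short: right program, but the two steps you yourself flag as hardest are exactly where the stated criteria are either incorrect or missing, so the proposal does not constitute a proof of the theorem.
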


We also need the curve complexes of surfaces with low complexity. The curve complex $\C(S_{0,4})$ (resp. $\C(S_{1,1})$) is a graph whose vertices are isotopy classes of simple closed curves on  $S_{0,4}$ (resp. $S_{1,1}$) and  whose edges are pairs of  isotopy classes of simple closed curves with geometric intersection number one (resp. two).

For simple closed curves $\alpha$ and $\beta$, we denote by $D^n_\alpha\beta$ the $n-$th Dehn twist of $\beta$ along $\alpha$. We say that an isomorphism $\iota:\C(S_{g,n})\to\C(S_{g',n'})$ is  \textit{compatible with  Dehn twists} if $\iota(D_\alpha\beta)=D_{\iota(\alpha)}\iota(\beta)$ holds for   any simple closed curves $\alpha$ and $\beta$.

\begin{lemma}[\cite{Luo2000}, Lemma 2.1] \label{lem:patterson}
Suppose that  $\{(g,n),(g',n')\}$ is  one of the following three pairs:
$
  \{(0,4),(1,1)\}, \{(0,5),(1,2)\},  \{(0,6),(2,0)\}.
$
then $\C(S_{g,n})\simeq\C(S_{g',n'})$
  Moreover any isomorphism $\C(S_{g,n})\to\C(S_{g',n'})$   is compatible with the Dehn twists.
\end{lemma}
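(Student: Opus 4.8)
\emph{Proposed approach.} The existence of an isomorphism $\C(S_{g,n})\simeq\C(S_{g',n'})$ is classical in each of the three cases. For $\{(0,4),(1,1)\}$ both complexes are the Farey graph: essential simple closed curves are parametrised by slopes in $\mathbb{Q}\cup\{\infty\}$, two of them being adjacent precisely when the slopes are Farey neighbours (the least intersection number of two distinct essential curves being $1$ on $S_{1,1}$ and $2$ on $S_{0,4}$), and the slope bijection is the required isomorphism. For $\{(0,5),(1,2)\}$ and $\{(0,6),(2,0)\}$ the identification comes from the hyperelliptic symmetry together with the Birman--Hilden correspondence between the mapping class groups: $S_{g',n'}$ carries an order-two symmetry $\sigma$, every isotopy class of simple closed curve on $S_{g',n'}$ has a $\sigma$-invariant representative, and such a representative projects to the quotient sphere either as a simple closed curve or as an arc joining two of its marked points; replacing an arc by the boundary of a regular neighbourhood of the arc together with its endpoints produces a bijection of vertex sets carrying disjointness to disjointness. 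Granting this, the content of the lemma is the rigidity assertion for Dehn twists, which I now address.

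The plan for the rigidity is to describe the twist orbit $\{D^n_\alpha\beta : n\in\mathbb{Z}\}$, and the affine order it inherits from $n$, purely in terms of the incidence structure of the complex, so that any isomorphism $\iota$ automatically carries $\{D^n_\alpha\beta\}$ onto $\{D^n_{\iota\alpha}\iota\beta\}$ respecting this structure (the exponents matching up to a common sign, and, as explained below, a common rescaling). In the complexity-one cases this is transparent from the Farey structure: the link $\mathrm{lk}(\alpha)$ of a vertex $\alpha$ is a bi-infinite path $\cdots\gamma_{-1}\,\gamma_0\,\gamma_1\cdots$ in which consecutive vertices span a Farey triangle with $\alpha$; every $\beta$ realising the minimal intersection with $\alpha$ occurs as some $\gamma_k$, and $D_\alpha$ acts on $\mathrm{lk}(\alpha)$ as a unit shift $\gamma_k\mapsto\gamma_{k\pm1}$. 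Hence $\{D^n_\alpha\beta\}$ and its order are read off from the incidence of Farey triangles at $\alpha$; for a curve $\beta$ of larger intersection with $\alpha$, $D^n_\alpha\beta$ is then determined because $D_\alpha$ is pinned down by its effect on finitely many curves. For the two remaining pairs I would first reduce to this picture: given $\alpha$, choose an auxiliary curve $\delta$ (or a short chain of curves) disjoint from $\alpha$ such that the vertices disjoint from $\delta$, together with $\alpha$, span a subcomplex isomorphic to the Farey graph of a complexity-one subsurface $W$ carrying $\alpha$; the restriction of $D_\alpha$ to $W$ is again a Dehn twist, recognised inside this Farey subcomplex as above, and one recovers the effect of $D_\alpha$ on an arbitrary curve by letting $\delta$ range over such choices.

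The hard part will be these last two pairs, for two reasons. First, once the complexity is at least two the twist $D_\alpha$ acts \emph{trivially} on $\mathrm{lk}(\alpha)$, so the twisting is invisible in the link and must be extracted from a genuinely global configuration of curves crossing $\alpha$; making the reduction to Farey subcomplexes uniform, and checking that finitely many of them determine $D^n_\alpha\beta$, calls for a hands-on analysis of each of $\C(S_{0,5})$, $\C(S_{1,2})$, $\C(S_{0,6})$ and $\C(S_2)$. Second, and more fundamentally, the correspondence underlying the isomorphism is \emph{not} twist-for-twist: under the hyperelliptic symmetry a Dehn twist about a curve passing through two of the marked points descends to a half-twist, whose square is the Dehn twist about the associated boundary curve, so one must first establish the precise dictionary between the two Dehn-twist structures --- which twist orbits correspond, and with what rescaling of the exponent --- before the compatibility can even be stated in the form needed for the applications. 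With that dictionary in hand, the rigidity follows from the same combinatorial-recognition argument as in the complexity-one case.
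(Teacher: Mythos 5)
Your route differs from the paper's in a way that matters. The paper's proof is a reduction: the hyperelliptic involution of $S_{2,0}$ fixes every isotopy class of simple closed curve, so the quotient induces one canonical isomorphism $\pi^*\colon\C(S_{2,0})\to\C(S_{0,6})$; any other isomorphism is $\pi^*$ precomposed with an automorphism of $\C(S_{2,0})$, and by the automorphism rigidity quoted in Theorem \ref{thm:curvecomplex:rigidity} that automorphism comes from a mapping class, which conjugates Dehn twists to Dehn twists; the pairs $\{(0,5),(1,2)\}$ and $\{(0,4),(1,1)\}$ are then obtained by restricting to subgraphs. You never invoke this reduction: you instead try to recognize the twist orbit $\{D^n_\alpha\beta\}$ intrinsically from the incidence structure. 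That is fine in the Farey cases, but in the complexity-two cases your scheme (Farey subcomplexes spanned by curves disjoint from an auxiliary $\delta$, then ``letting $\delta$ range'') is only a program: you must still show that the shifts recognized in the various subcomplexes cohere into a single simplicial map and that this map agrees with $D_\alpha$ on every vertex, including curves meeting both $\alpha$ and $\delta$. Executing this amounts to redoing by hand a substantial part of the Ivanov--Korkmaz--Luo rigidity that the paper simply cites, so as written that half of your argument is a plan rather than a proof. Also note that even in complexity one your statement that $D_\alpha$ acts on $\mathrm{lk}(\alpha)$ as the unit shift is only true on the torus side; on $\C(S_{0,4})$ the unit shift is realized by the half-twist and $D_\alpha$ shifts by two.

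On the other hand, your ``dictionary'' point is correct, and it is exactly the step the paper's sketch leaves unaddressed when it asserts compatibility of $\pi^*$: under the quotient isomorphism a twist about a nonseparating curve upstairs corresponds to a half-twist downstairs, so exponents rescale, and your argument, even if completed, would yield statements of the shape $\iota(D^{2n}_\alpha\beta)=D^{\pm n}_{\iota\alpha}\iota\beta$ (with a type-dependent rescaling), not the identity $\iota(D^{n}_\alpha\beta)=D^{n}_{\iota(\alpha)}\iota(\beta)$ asserted in the lemma. Indeed, for the pair $\{(0,4),(1,1)\}$ the asserted identity fails for every isomorphism already at $n=1$: since $\i(D_\alpha\beta,\beta)=\i(\alpha,\beta)^2=1$, the vertices $D_\alpha\beta$ and $\beta$ span an edge of $\C(S_{1,1})$, whereas $\i(D_{\iota\alpha}\iota\beta,\iota\beta)=\i(\iota\alpha,\iota\beta)^2=4\neq2$, so $D_{\iota\alpha}\iota\beta$ and $\iota\beta$ do not span an edge of $\C(S_{0,4})$, and an isomorphism cannot match these configurations. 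So be clear that you are (deliberately) proving a modified statement rather than the lemma as written; pinning down the exact exponent dictionary is not cosmetic, since the factor of $2$ in the exponent is precisely what is exploited when the lemma is applied in Proposition \ref{prop:isometry:isomorphism}.
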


\begin{proof}[Sketch of the proof]
  Let $r:S_{2,0}\to S_{2,0}$ be a hyperelliptic involution. Then $r$ fixes every simple closed curve. The quotient map $\pi: S_{2,0}\to S_{2,0}/_\sim$ induces an isomorphism $\pi^*:\C(S_{2,0})\to\C(S_{0,6})$.
  Any other isomorphism is  a precomposition of $\pi^*$ with some automorphism of $\C(S_{2,0})$.
   The compatibility with Dehn twists then follows from the automorphism rigidity of $\C(S_{2,0})$ in Theorem \ref{thm:curvecomplex:rigidity}.  By considering subgraphs of  $\C(S_{2,0})$ and $\C(S_{0,6})$, one gets isomorphisms $\C(S_{1,2})\to\C(S_{0,5})$ and $\C(S_{1,1})\to\C(S_{0,4})$ as well as the compatibility.
\end{proof}

\begin{remark}\label{rmk:torus:sphere}
   Let $\iota$ be either an automorphism of $(\C(S_{1,2}))$ or   an isomorphism from  $(\C(S_{1,2}))$ to $(\C(S_{0,5}))$ which sends some non-separating curve $\alpha$ to a separating curve $\iota(\alpha)$. Considering the subgraphs induced by the curves disjoint from $\alpha$ and $\iota(\alpha)$ respectively, we see that  $\iota$ induces an isomorphism $\iota^*:\C(S_{0,4})\to\C(S_{1,1})$.
\end{remark}

\subsection{Exceptional surface $S_{1,1}$}
We  recall some results from \cite{DLRT2016} in the case of punctured torus $S_{1,1}$.
Notice that for both $S_{0,4}$ and $S_{1,1}$, the Teichm\"uller spaces are of dimension two. Accordingly, the unit tangent spheres are of dimension one.
Let $\alpha$ be a simple closed curve.  Let
$$ F_X(\alpha):=\{v\in T^1_X\TS:~(d_X \log\ell_\alpha)(v)=1 \} $$
be the facet in the unit tangent sphere $T^1_X\TS$ corresponding to $\alpha$. Let $|F_X(\alpha)|$ be the length of $F_X(\alpha)$ with respect to the Thurston norm $\|\cdot\|_\mathrm{Th}$.

\begin{proposition}[\cite{DLRT2016}, Proposition 6.7]\label{prop:facet:length1} Let $X\in\T(S_{1,1})$.
  There exists a constant $C_1=C_1(X)$ depending on $X$, such that for every simple closed curve $\alpha$,
  \begin{equation*}
    \frac{1}{C_1}\ell_\alpha(X)^2e^{-\ell_\alpha(X)}
    \leq |F_X(\alpha)|\leq C_1 \ell_\alpha(X)^2e^{-\ell_\alpha(X)}.
  \end{equation*}
\end{proposition}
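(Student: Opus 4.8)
The plan is to produce explicit coordinates on $\T(S_{1,1})$ adapted to a fixed simple closed curve $\alpha$, compute the tangent to the stretch ray along a maximal lamination containing $\alpha$, and estimate the length of the arc $F_X(\alpha)$ on the unit sphere by relating it to the normal cone of the dual convex body at the point $d_X\log\ell_\alpha$.

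First I would set up the geometry near $\alpha$. On $S_{1,1}$, a simple closed curve $\alpha$ cuts the surface into a single cylinder-with-a-cusp; equivalently, complete $\alpha$ to a maximal geodesic lamination $\lambda_\alpha$ by adding a spiraling leaf and the cusp leaves, so that $X-\lambda_\alpha$ is a single ideal triangle. By Corollary~\ref{cor:infinitesimal}, the tangent vector $v_\alpha$ to $\operatorname{stretch}(X,\lambda_\alpha,t)$ at $t=0$ satisfies $(d_X\log\ell_\mu)(v_\alpha)=1$ exactly when $\operatorname{supp}\mu\subset\lambda_\alpha$, and in dimension two the support condition forces $\mu$ to be supported on $\alpha$. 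Hence $v_\alpha$ is the unique vertex of the $1$-dimensional convex polygon $T^1_X\T(S_{1,1})$ at which the supporting functional is $d_X\log\ell_\alpha$; dually, $F_X(\alpha)$ is the edge (arc) of the polar polygon lying in the supporting hyperplane $\{w: \langle w, v_\alpha\rangle = 1\}$, and its endpoints are where the support function of $T^1_X\T$ transitions to a neighboring facet. The length $|F_X(\alpha)|$ is thus governed by how sharply the Thurston unit ball is ``creased'' at $v_\alpha$, which is controlled by the second-order behaviour of $t\mapsto \ell_\beta(\operatorname{stretch}(X,\lambda_\alpha,t))$ for test curves $\beta$ with $\i(\alpha,\beta)=1$.

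Next I would carry out the actual estimate using Fenchel--Nielsen-type coordinates $(\ell_\alpha,\tau_\alpha)$ on $\T(S_{1,1})$, where $\ell=\ell_\alpha(X)$ is the length of $\alpha$ and $\tau$ the twist. The tangent vectors of interest are spanned by $\partial/\partial\ell$ and $\partial/\partial\tau$. For a curve $\beta$ crossing $\alpha$ once, Wolpert-type formulas and the collar lemma give $\ell_\beta(X)\asymp$ (const) $\cdot e^{\ell/2}$ up to factors depending on $\tau$, and more importantly the normalized gradient $d_X\log\ell_\beta$ makes an angle with $d_X\log\ell_\alpha$ that is exponentially small in $\ell$: precisely, the component of $d\log\ell_\beta$ transverse to $d\log\ell_\alpha$ is of order $e^{-\ell/2}$, while moving distance $1$ along $F_X(\alpha)$ in the $\|\cdot\|_{\mathrm{Th}}$-metric corresponds to a change of order $\ell^{-2}e^{\ell}$ in the appropriate coordinate, because the Thurston norm of the coordinate vector field $\partial/\partial\tau$ is comparable to $e^{-\ell}/\ell^2$ near the thin part (this is where the factor $\ell^2 e^{-\ell}$ enters, matching the known asymptotics of the Thurston metric on the thin part of $\T(S_{1,1})$, cf.\ the work of Dumas--Lenzhen--Rafi--Tao). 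Combining these: $F_X(\alpha)$ runs between the two adjacent facets $F_X(\beta)$ and $F_X(\beta')$ for the two ``closest'' curves $\beta,\beta'$ on either side of $\alpha$, the intersection points are determined by solving $d\log\ell_\alpha - d\log\ell_\beta = 0$ in the relevant directions, and the separation of these points in the Thurston norm comes out to be $\asymp \ell_\alpha(X)^2 e^{-\ell_\alpha(X)}$, with multiplicative constants depending only on $X$ through a compact range of the twist parameter and the systole. The constant $C_1(X)$ absorbs these and the implied constants in the collar/Wolpert estimates.

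The main obstacle, and the place requiring genuine care, is the two-sided control of the \emph{location} of the endpoints of $F_X(\alpha)$, equivalently the claim that for $\beta$ with $\i(\alpha,\beta)=1$ the hyperplanes $\{d_X\log\ell_\beta = 1\}$ cut the segment $\{d_X\log\ell_\alpha=1\}$ at parameters comparable from above and below to $\ell^2 e^{-\ell}$, uniformly as $\alpha$ ranges over \emph{all} simple closed curves on $S_{1,1}$. The upper bound is the easier direction: any such $\beta$ already forces the facet to end by the time the $\beta$-length starts decreasing, giving $|F_X(\alpha)|\le C_1\ell^2 e^{-\ell}$ after a Taylor expansion of $\ell_\beta\circ\operatorname{stretch}$. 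For the lower bound one must show the facet does \emph{not} end too soon, i.e.\ that no measured lamination $\mu$ achieves derivative $1$ along $v$ for $v$ in a genuinely long sub-arc; here one invokes the strict-inequality clause of Corollary~\ref{cor:infinitesimal} together with a compactness/continuity argument on $\PML(S_{1,1})$ and the explicit length spectrum near the $\alpha$-thin part. I would handle the uniformity in $\alpha$ by using the $\MCG(S_{1,1})$-action: it suffices to prove the estimate for $\alpha$ in a fixed Farey neighborhood of a base curve, since the mapping class group acts on $\T(S_{1,1})$ by isometries permuting the facets $F_X(\alpha)\mapsto F_{gX}(g\alpha)$, reducing the problem to a region where all quantities vary continuously over a compact set — at the cost of allowing $C_1$ to depend on $X$, exactly as in the statement.
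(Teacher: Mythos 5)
Your proposal is not checked against what this paper actually does: the paper does not prove Proposition \ref{prop:facet:length1} at all, it quotes it from Dumas--Lenzhen--Rafi--Tao, and the only hint of the underlying argument appears in the remark following Proposition \ref{prop:correspondence}, where the analogous $S_{0,4}$ computation is reduced to an explicit Fenchel--Nielsen formula for the relative twisting $\Delta(t)=\tau_\alpha(X^+_t)-\tau_\alpha(X^-_t)$ between the two stretch rays $\operatorname{stretch}(X,\alpha^{\pm},t)$. That is the real shape of the proof: the endpoints of $F_X(\alpha)$ are the two stretch vectors $v_X(\alpha^{\pm})$, their difference is a multiple of the twist field $\partial/\partial\tau_\alpha$, the coefficient $\Delta'(0)$ is computed explicitly and is of order $\ell_\alpha(X)e^{-\ell_\alpha(X)}$, and the Thurston norm of $\partial/\partial\tau_\alpha$ at the fixed surface $X$ is of order $\ell_\alpha(X)$ (up to constants depending on $X$), which multiplies out to $\ell_\alpha(X)^2e^{-\ell_\alpha(X)}$. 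Your sketch starts along these lines but its quantitative core is wrong: you assert $\|\partial/\partial\tau_\alpha\|_{\mathrm{Th}}\asymp e^{-\ell}/\ell^{2}$ and that a unit of Thurston length along the facet corresponds to a coordinate change of order $\ell^{-2}e^{\ell}$. Both are exponentially backwards (the twist field has polynomially large norm in $\ell$, and the twist-width of the facet is exponentially small), and the regime is misidentified: $X$ is fixed and $\ell=\ell_\alpha(X)$ is large, so no ``thin part'' of $\T(S_{1,1})$ is involved. Worse, the factor $\ell^{2}e^{-\ell}$ is not derived anywhere in your argument; it is inserted ``to match the known asymptotics'' of DLRT, i.e.\ the step carrying all the content is circular.

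Two further steps would fail as written. First, you locate the endpoints of $F_X(\alpha)$ by intersecting the segment $\{d_X\log\ell_\alpha=1\}$ with hyperplanes $\{d_X\log\ell_\beta=1\}$ for ``adjacent'' curves $\beta$ with $i(\alpha,\beta)=1$; but on $S_{1,1}$ any two distinct simple closed curves intersect, so by Lemma \ref{lem:disjointness} the closed facets are pairwise disjoint and the endpoints of $F_X(\alpha)$ lie on no other facet. They are the stretch vectors for the two maximal completions $\alpha^{+},\alpha^{-}$ of $\alpha$ (note also that the complement of a maximal lamination in $S_{1,1}$ is two ideal triangles, not one, and there are two completions, not a single $\lambda_\alpha$), and the estimate requires the explicit hyperbolic-trigonometric computation of how much these two stretch directions differ in the twist coordinate --- a computation your proposal never performs and cannot replace by Corollary \ref{cor:infinitesimal} plus compactness, since compactness arguments give no rate. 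Second, the uniformity-in-$\alpha$ reduction via the mapping class group is invalid: an isometry $g$ carries $F_X(\alpha)$ to $F_{gX}(g\alpha)$, so it moves the basepoint, and proving the estimate for curves in a Farey neighborhood of a base curve at $X$ says nothing about all curves at the same $X$; the uniformity must come from an asymptotic formula valid for every sufficiently long $\alpha$ on the fixed $X$, which is exactly what the explicit Fenchel--Nielsen computation provides.
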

This implies
\begin{theorem}[\cite{DLRT2016}, Theorem 6.8]\label{thm:DLRT:facet:limit}
  Let $X\in\T(S_{1,1})$. Let $\alpha$ and $\beta$ be two simple closed curves with $\i(\alpha,\beta)=1$. Let $\beta_n=D^n_\alpha\beta$. Then
  \begin{equation*}
    \lim_{n\to\infty}\frac{|\log |F_X(\beta_n) ||}{|n|}=\ell_\alpha(X).
  \end{equation*}
\end{theorem}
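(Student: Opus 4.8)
The plan is to deduce the limit directly from the two-sided estimate of Proposition~\ref{prop:facet:length1} together with the well-known linear growth of hyperbolic length under iterated Dehn twists. First I would apply Proposition~\ref{prop:facet:length1} to the curve $\beta_n=D^n_\alpha\beta$: since the constant $C_1=C_1(X)$ there depends only on $X$, we get, uniformly in $n$,
\begin{equation*}
  \frac{1}{C_1}\,\ell_{\beta_n}(X)^2 e^{-\ell_{\beta_n}(X)}
  \leq |F_X(\beta_n)|
  \leq C_1\,\ell_{\beta_n}(X)^2 e^{-\ell_{\beta_n}(X)}.
\end{equation*}
Taking logarithms yields
\begin{equation*}
  \log |F_X(\beta_n)| = -\,\ell_{\beta_n}(X) + 2\log \ell_{\beta_n}(X) + O(1),
\end{equation*}
where the $O(1)$ error is bounded in absolute value by $\log C_1$, independently of $n$.

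Next I would record the asymptotics of $\ell_{\beta_n}(X)$. The normalized curves $\tfrac1n\beta_n=\tfrac1n D^n_\alpha\beta$ converge in $\MLS$ to $\i(\alpha,\beta)\,\alpha=\alpha$ (the standard fact that $i(D^n_\alpha\beta,\gamma)/n\to i(\alpha,\beta)\,i(\alpha,\gamma)$ for every simple closed curve $\gamma$), and the hyperbolic length function $\ell_\bullet(X)$ extends continuously and homogeneously over $\MLS$. Hence
\begin{equation*}
  \lim_{n\to\infty}\frac{\ell_{\beta_n}(X)}{|n|}
  = \ell_{\i(\alpha,\beta)\alpha}(X)
  = \i(\alpha,\beta)\,\ell_\alpha(X)
  = \ell_\alpha(X).
\end{equation*}
In particular $\ell_{\beta_n}(X)\to\infty$, so $\log\ell_{\beta_n}(X)=O(\log|n|)$ and, for all large $n$, the quantity $-\ell_{\beta_n}(X)+2\log\ell_{\beta_n}(X)+O(1)$ is negative and equals $-\ell_{\beta_n}(X)\,(1+o(1))$.

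Combining the two displays, for large $n$ one has
\begin{equation*}
  \frac{\bigl|\log|F_X(\beta_n)|\bigr|}{|n|}
  = \frac{\ell_{\beta_n}(X)}{|n|}\,(1+o(1)) + \frac{O(\log|n|)}{|n|}
  \longrightarrow \ell_\alpha(X),
\end{equation*}
which is the assertion. The computation is otherwise routine bookkeeping with the logarithm and the sign; the one input that deserves care is the linear-growth statement $\tfrac1n D^n_\alpha\beta\to\i(\alpha,\beta)\alpha$ in $\MLS$ and the resulting limit for $\ell_{\beta_n}(X)/|n|$, which is the only place where anything beyond Proposition~\ref{prop:facet:length1} is used.
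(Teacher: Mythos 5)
Your proposal is correct and follows essentially the same route the paper intends: the theorem is presented there as a direct consequence of the two-sided facet-length estimate of Proposition~\ref{prop:facet:length1} combined with the standard linear growth $\ell_{D^n_\alpha\beta}(X)\sim |n|\,\i(\alpha,\beta)\,\ell_\alpha(X)$, which is exactly the bookkeeping you carry out. The only input beyond the proposition, the convergence $\tfrac{1}{|n|}D^n_\alpha\beta\to\i(\alpha,\beta)\alpha$ in $\MLS$ and continuity of length functions, is the same fact the paper itself invokes elsewhere, so there is no gap.
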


 For each simple closed curve $\alpha$ on $S_{0,4}$ or $S_{1,1}$, there are two  canonical maximal laminations $\alpha^+$ and $\alpha^-$\footnote{\small For $S_{1,1}$, $\alpha^+\backslash\alpha$ (resp. $\alpha^-\backslash\alpha$) consists of three bi-infinite simple arcs $\{\delta,\eta_1,\eta_2\}$ such that $\delta$ spirals to the left (resp. right) at both ends around $\alpha$ while $\eta_i$ spirals to left (resp. right) around $\alpha$ at one end and approaches to the puncture at the other end. For $S_{0,4}$, $\alpha^+\backslash\alpha$ (resp. $\alpha^-\backslash\alpha$) consists of six bi-infinite simple arcs $\{\delta_1,\delta_2,\eta_1,\eta_2,\eta_3,\eta_4\}$ such that $\delta_i$ spirals to the left (resp. right) at both ends around $\alpha$ while $\eta_i$ spirals to left (resp. right) around $\alpha$ at one end and approaches to a puncture at the other end.}.
Let $v_X(\alpha^+)$ and $v_X(\alpha^-)$ be respectively the unit tangent vectors to the stretch paths $\mathrm{ stretch}(X,\alpha^+,t)$ and $\mathrm{stretch}(X,\alpha^-,t)$.
Then  $v_X(\alpha^+)$ and $v_X(\alpha^-)$ are the endpoints of $F_X(\alpha)$. Hence
\begin{equation*}
  |F_X(\alpha)|=\| v_X(\alpha^+)-v_X(\alpha^-)\|_{\mathrm{Th}}.
\end{equation*}
Fix a simple closed curve $\alpha$.  For
 $\gamma\neq\alpha$, let $(v_X(\gamma^-),v_X(\alpha^+))$ (resp. $(v_X(\gamma^+),v_X(\alpha^-))$) be the subinterval of  $T^1_X\T(S_{1,1})\setminus F_X(\alpha)$  whose  endpoints are  $v_X(\gamma^-)$  and $v_X(\alpha^+)$ (resp.  $v_X(\gamma^+)$  and $v_X(\alpha^-)$).

Combining Theorem \ref{thm:DLRT:facet:limit} and the train track approximate, Dumas-Lenzhen-Rafi-Tao also proved the following, which is contained in the proof of \cite[Theorem 1.4]{DLRT2016} and which we summarize as a proposition.
\begin{proposition}[\cite{DLRT2016}]
\label{prop:correspondence1}
 Let $X\in\T(S_{1,1})$. There exits a constant $L_1$ depending on $X$ such  that for any simple closed curve $\alpha$ with $\ell_\alpha(X)>L_1$,  there exist a simple closed curve $\beta$ with $\i(\alpha,\beta)=1$ and a positive integer $N$, such that
 \begin{itemize}
   \item for each $n>N$, $F_X(\beta_n)$ is the longest facet in $(v_X(\beta_{n-1}^-),v_X(\alpha^+))$,  and
   \item for each $n<-N$, $F_X(\beta_n)$ is the longest facet in $(v_X(\beta_{n+1}^+),v_X(\alpha^-))$,
 \end{itemize}
 where $\beta_n=D^n_\alpha\beta$.
\end{proposition}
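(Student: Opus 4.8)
The plan is to bootstrap from Theorem~\ref{thm:DLRT:facet:limit}, which tells us that for a fixed curve $\alpha$ and a curve $\beta$ with $\i(\alpha,\beta)=1$, the lengths of the facets $F_X(\beta_n)$, $\beta_n=D^n_\alpha\beta$, decay at the exponential rate $\ell_\alpha(X)$ as $|n|\to\infty$. First I would set up the combinatorial picture: on $S_{1,1}$, the curves with $\i(\alpha,\gamma)=1$ split into the two families $\{D^n_\alpha\beta\}_{n\in\Z}$ and $\{D^n_\alpha\beta'\}_{n\in\Z}$ (for the two choices of the "other" curve), while curves with $\i(\alpha,\gamma)\geq2$ are, by Proposition~\ref{prop:facet:length1}, quantitatively shorter facets because of the $e^{-\ell_\gamma(X)}$ factor and the lower bound $\ell_\gamma(X)\gtrsim \i(\alpha,\gamma)\,\ell_\alpha(X)-O(1)$ coming from the collar lemma / intersection-length inequalities. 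The point is that once $\ell_\alpha(X)$ is large, the facets $F_X(\beta_n)$ for $|n|$ not too large dominate everything else sitting in the relevant arc of the unit circle.

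Next I would locate these facets on the circle $T^1_X\T(S_{1,1})$. Each facet $F_X(\gamma)$ has endpoints $v_X(\gamma^+),v_X(\gamma^-)$, and as $n\to+\infty$ the curves $\beta_n$ converge in $\PML$ to $\alpha$, so the facets $F_X(\beta_n)$ accumulate onto the endpoint $v_X(\alpha^+)$ of $F_X(\alpha)$ from one side, monotonically in $n$; similarly $F_X(\beta_n)$ for $n\to-\infty$ accumulates onto $v_X(\alpha^-)$. This monotone nesting is exactly what lets me speak of "the longest facet in $(v_X(\beta_{n-1}^-),v_X(\alpha^+))$": having removed the facets $F_X(\beta_N),\dots,F_X(\beta_{n-1})$, the subarc $(v_X(\beta_{n-1}^-),v_X(\alpha^+))$ contains $F_X(\beta_n),F_X(\beta_{n+1}),\dots$ together with facets of curves intersecting $\alpha$ at least twice (and facets of curves disjoint from $\alpha$, but there are none other than $\alpha$ on $S_{1,1}$). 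The train-track approximation from \cite{DLRT2016} gives the uniform comparison showing $|F_X(\beta_n)|$ beats all the "higher intersection" competitors in that subarc, and beats $|F_X(\beta_{n+1})|,|F_X(\beta_{n+2})|,\dots$ because the sequence $|F_X(\beta_n)|$ is eventually monotone decreasing — this last point again from the asymptotic $|\log|F_X(\beta_n)||/|n|\to\ell_\alpha(X)$ together with the finer estimate of Proposition~\ref{prop:facet:length1} applied to $\beta_n$ via $\ell_{\beta_n}(X)=|n|\ell_\alpha(X)+O(1)$.

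So the key steps, in order, are: (1) fix $\alpha$ with $\ell_\alpha(X)$ large and enumerate the curves meeting $\alpha$, isolating the two Dehn-twist families; (2) use Proposition~\ref{prop:facet:length1} plus intersection-length bounds to get the precise asymptotics $\log|F_X(\beta_n)|=-|n|\ell_\alpha(X)+2\log|n|+O(1)$ and the corresponding strictly-smaller estimates for curves with $\i(\alpha,\cdot)\geq2$; (3) establish the monotone nesting of the facets $F_X(\beta_n)$ toward $v_X(\alpha^\pm)$ on the circle, via convergence $\beta_n\to\alpha$ in $\PML$ and continuity of $\PD_X$; (4) combine (2) and (3), invoking the train-track approximation of \cite{DLRT2016} for the uniform comparison, to conclude that for $n$ beyond some threshold $N$ the facet $F_X(\beta_n)$ is genuinely the longest one in its subarc; (5) choose $L_1$ uniformly — here one checks the constants $C_1,L_1$ depend only on $X$, not on the particular $\alpha$, by a compactness/covering argument on $\PML$. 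The main obstacle I expect is step~(4): getting a comparison that is uniform in $n$ and in $\alpha$ requires the train-track machinery, since the naive facet-length asymptotics alone control each $|F_X(\beta_n)|$ only up to multiplicative constants that could a priori depend on $\alpha$ and degrade as $\ell_\alpha(X)\to\infty$; making sure the error terms do not swallow the gap between consecutive facets is the delicate point, and it is precisely this that forces the hypothesis $\ell_\alpha(X)>L_1$.
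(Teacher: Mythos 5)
Your outline follows the same route as the source: note that the paper itself gives no proof of Proposition~\ref{prop:correspondence1} --- it is extracted from the proof of Theorem~1.4 of \cite{DLRT2016}, where it comes from combining the facet-length asymptotics (Proposition~\ref{prop:facet:length1}, Theorem~\ref{thm:DLRT:facet:limit}) with the train-track approximation, and like the paper you defer the decisive uniform comparison to exactly that train-track machinery. Two side remarks are off but harmless: on $S_{1,1}$ the curves meeting $\alpha$ once form a single $D_\alpha$-orbit (slopes $1/s$, $s\in\Z$, for $\alpha$ of slope $0$), not two families; and the constant $C_1$ of Proposition~\ref{prop:facet:length1} already depends only on $X$, so no compactness argument over $\PML$ is needed for uniformity in $\alpha$.

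The genuine gap is in the one quantitative justification you do supply for the key comparison, and it fails as written. The collar lemma does not give $\ell_\gamma(X)\gtrsim \i(\alpha,\gamma)\,\ell_\alpha(X)-O(1)$: the collar of a \emph{long} curve is thin (its width tends to $0$ as $\ell_\alpha\to\infty$), and for fixed $X$ a fixed short curve $\gamma$ crossing a very long $\alpha$ many times violates the inequality outright. Moreover, even granting such a bound, it is insufficient: a curve $\gamma$ with $\i(\alpha,\gamma)=2$ and $\ell_\gamma\approx 2\ell_\alpha(X)$ would have $|F_X(\gamma)|\asymp \ell_\gamma^2e^{-2\ell_\alpha(X)}$, which for $n\ge 3$ is far \emph{larger} than $|F_X(\beta_n)|\asymp n^2\ell_\alpha(X)^2e^{-n\ell_\alpha(X)}$, so ``intersection number at least two'' alone never beats the decay in $n$. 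What actually saves the statement is positional: any curve whose facet lies in the subarc $(v_X(\beta_{n-1}^-),v_X(\alpha^+))$ has projective class between $[\beta_{n-1}]$ and $[\alpha]$, hence is twisted about $\alpha$ on the order of $n$ times, giving $\ell_\gamma\gtrsim (n-1)\,\i(\alpha,\gamma)\,\ell_\alpha(X)-O(\i(\alpha,\gamma))$; only this twisting estimate, combined with Proposition~\ref{prop:facet:length1} and with $\ell_\alpha(X)>L_1$ to absorb the multiplicative and additive constants, shows that $F_X(\beta_n)$ dominates every competitor in its subarc. That coupling of facet position with a length lower bound is precisely the train-track content of \cite{DLRT2016} that you invoke as a black box, so your step~(2) as stated is not a proof of the needed comparison, and the proposal should be read as an outline of the DLRT argument rather than an independent derivation.
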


\subsection{Exceptional surface $S_{0,4}$}
Using the same idea as in \cite{DLRT2016}, one can prove analogues for $S_{0,4}$. Namely
\begin{proposition} \label{prop:facet:length}
Let $X\in\T(S_{0,4})$.
  There exists a constant $C_2=C_2(X)$ depending on $X$, such that for every simple closed curve $\alpha$,
  \begin{equation*}
    \frac{1}{C_2}\ell_\alpha(X)^2e^{-\ell_\alpha(X)/2}
    \leq |F_X(\alpha)|\leq C_2 \ell_\alpha(X)^2e^{-\ell_\alpha(X)/2}.
  \end{equation*}
\end{proposition}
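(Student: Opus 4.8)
The plan is to mimic the proof of Proposition \ref{prop:facet:length1} for $S_{1,1}$ from \cite{DLRT2016}, replacing the punctured-torus geometry with that of the four-punctured sphere. Recall that $|F_X(\alpha)|=\|v_X(\alpha^+)-v_X(\alpha^-)\|_{\mathrm{Th}}$, where $v_X(\alpha^\pm)$ are the unit tangent vectors to the two stretch paths along the canonical maximal laminations $\alpha^+,\alpha^-$ obtained by completing $\alpha$ with arcs spiralling to the left (resp. right). Both $\alpha^+$ and $\alpha^-$ contain $\alpha$ as a leaf, so by Corollary \ref{cor:infinitesimal} each of $v_X(\alpha^\pm)$ lies in the facet $F_X(\alpha)$ and in fact realizes one of its two endpoints; the facet is a one-dimensional segment because $\dim\T(S_{0,4})=2$. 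The whole point is therefore to estimate how far apart these two stretch vectors are in the Thurston norm, and to show this distance is comparable to $\ell_\alpha(X)^2 e^{-\ell_\alpha(X)/2}$.

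First I would set up coordinates on $\T(S_{0,4})$ adapted to $\alpha$: cutting along $\alpha$ produces two pairs of pants, each with two punctures and one boundary component of length $\ell_\alpha(X)$, and the surface is recovered by a twist parameter. In these Fenchel–Nielsen-type coordinates $(\ell_\alpha,\tau_\alpha)$ the derivative of $\log\ell_\gamma$ for curves $\gamma$ crossing $\alpha$ can be controlled. The stretch vectors $v_X(\alpha^\pm)$ differ essentially only in the twisting direction, so $v_X(\alpha^+)-v_X(\alpha^-)$ points (to first order) in the twist direction $\partial_{\tau_\alpha}$; its Thurston norm is then $\sup_{[\mu]}(d_X\log\ell_\mu)[v_X(\alpha^+)-v_X(\alpha^-)]$. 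The sup is dominated by laminations $\mu$ transverse to $\alpha$, and Wolpert-type derivative formulas (or the cosine formula for the twist derivative) give $(d_X\log\ell_\mu)[\partial_{\tau_\alpha}]$ in terms of the intersection pattern of $\mu$ with $\alpha$ and the geometry near $\alpha$. The key geometric input is that for the four-punctured sphere the collar around a curve of length $\ell_\alpha$ has width $\asymp \ell_\alpha^{-1}e^{-\ell_\alpha/2}$ — note the half-length exponent, which is where the difference from the $S_{1,1}$ case $e^{-\ell_\alpha}$ originates; in $S_{1,1}$ a simple closed curve self-intersects after crossing its complement, effectively doubling the collar contribution. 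Tracking this through the derivative estimates yields the two-sided bound with the exponent $\ell_\alpha(X)/2$, the polynomial factor $\ell_\alpha^2$ coming from the length of the spiralling arcs in the maximal lamination (roughly $\ell_\alpha$ for the arc length times another $\ell_\alpha$ from the normalization of the stretch vector to be a \emph{unit} vector, exactly as in Proposition \ref{prop:facet:length1}).

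Concretely, the upper bound follows from: (i) the identity map from $X$ to $\mathrm{stretch}(X,\alpha^\pm,t)$ is $e^t$-Lipschitz and is an isometry only on $\alpha^\pm$, so off a thin collar of $\alpha$ the expansion factor is much less than $e^t$; (ii) a quantitative version of this, as in Thurston \cite{Thurston1998}, giving that $v_X(\alpha^+)$ and $v_X(\alpha^-)$ agree except on the contribution of the collar; (iii) integrating the pointwise expansion over the collar of width $\asymp\ell_\alpha^{-1}e^{-\ell_\alpha/2}$. The lower bound requires exhibiting an explicit test lamination $\mu$ (e.g. a simple closed curve $\beta$ with $\i(\alpha,\beta)=2$) for which $(d_X\log\ell_\mu)[v_X(\alpha^+)-v_X(\alpha^-)]$ is at least a definite fraction of $\ell_\alpha^2 e^{-\ell_\alpha/2}$; this uses the twist-derivative formula together with a lower bound on how much the length of $\beta$ changes when one twists a full turn inside the collar. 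I expect the main obstacle to be the careful bookkeeping in step (ii)–(iii): controlling the \emph{difference} $v_X(\alpha^+)-v_X(\alpha^-)$ rather than each stretch vector individually, and in particular verifying that the leading collar contributions do \emph{not} cancel between the left-spiralling and right-spiralling completions, so that the lower bound genuinely holds. This is precisely the technical heart of \cite[Proposition 6.7]{DLRT2016}, and the argument there transfers once the $S_{0,4}$ collar width and the combinatorics of $\alpha^\pm\setminus\alpha$ (six arcs rather than three) are substituted; the exponent $1/2$ and the constant $C_2(X)$ are the only quantitative changes.
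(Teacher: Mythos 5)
Your outline does coincide with the paper's: the proposition is proved by rerunning the argument for Proposition \ref{prop:facet:length1} of \cite{DLRT2016}, viewing $F_X(\alpha)$ as the segment between the stretch vectors $v_X(\alpha^+)$ and $v_X(\alpha^-)$ and measuring, in Fenchel--Nielsen coordinates $(\ell_\alpha,\tau_\alpha)$ adapted to $\alpha$, how the two stretch paths drift apart in the twist direction. But the only genuinely new content in the $S_{0,4}$ case is the explicit computation of that drift, and this is exactly what your proposal does not supply and, where it gestures at it, gets wrong. The paper's adaptation consists of replacing the twist-difference formula of \cite{DLRT2016} by
\[
\Delta(t)=\tau_\alpha(X^+_t)-\tau_\alpha(X^-_t)= 4e^t\log \frac{e^{\ell_\alpha(X)}+1}{e^{\ell_\alpha(X)/2}-1}-4\log \frac{e^{e^t\ell_\alpha(X)}+1}{e^{e^t\ell_\alpha(X)/2}-1},
\]
obtained from the hyperbolic trigonometry of the spiralling leaves in a pair of pants with boundary lengths $(\ell_\alpha,0,0)$ (cutting $S_{0,4}$ along $\alpha$ gives two such pants), computed as in the proof of \cite[Theorem 5.1]{DLRT2016}. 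Since $\log\frac{e^{L}+1}{e^{L/2}-1}=L/2+e^{-L/2}+O(e^{-L})$, the linear terms cancel and $\Delta'(0)\asymp \ell_\alpha(X)\,e^{-\ell_\alpha(X)/2}$; the remaining polynomial factor appears when the twist displacement is converted into the Thurston norm on the fixed surface $X$, exactly as in the $S_{1,1}$ argument.

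Your proposed quantitative substitutes would not reproduce this. The collar half-width of a geodesic of length $\ell_\alpha$ satisfies $\sinh w\,\sinh(\ell_\alpha/2)=1$, so it is $\asymp e^{-\ell_\alpha/2}$ on \emph{every} hyperbolic surface, not $\asymp\ell_\alpha^{-1}e^{-\ell_\alpha/2}$, and it is the same for $S_{1,1}$ and $S_{0,4}$; hence collar width cannot be the source of the passage from $e^{-\ell_\alpha}$ to $e^{-\ell_\alpha/2}$, and the ``self-intersection doubles the collar contribution'' heuristic is not correct (a simple closed curve has no self-intersections, and if anything the transversal meets $\alpha$ twice on $S_{0,4}$ and once on $S_{1,1}$, which points in the opposite direction). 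The actual source of the exponent change is the cusp-versus-boundary structure of the complementary pants: for $S_{1,1}$ the complement of $\alpha$ is a single pair of pants with two boundary components of length $\ell_\alpha$, and the shear formula for the spiralling leaves produces corrections of size $e^{-\ell_\alpha}$, whereas for $S_{0,4}$ each complementary pants has one boundary of length $\ell_\alpha$ and two cusps, producing corrections of size $e^{-\ell_\alpha/2}$ (visible in the denominator $e^{\ell_\alpha(X)/2}-1$ above). Also, attributing a factor of $\ell_\alpha$ to ``normalizing the stretch vector to be a unit vector'' is spurious: by Corollary \ref{cor:infinitesimal} the stretch vectors already have Thurston norm one. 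In short, the architecture is right, but the calculation that constitutes the proposition's content --- the $S_{0,4}$ analogue of the Fenchel--Nielsen computation in \cite{DLRT2016} --- is missing, and the heuristics offered in its place are incorrect, so as written the proposal does not establish the two-sided bound $\ell_\alpha(X)^2e^{-\ell_\alpha(X)/2}$.
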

\begin{theorem}\label{thm:facet:length:limit}
  Let $X\in\T(S_{0,4})$. Let $\alpha$ and $\beta$ be two simple closed curves with $\i(\alpha,\beta)=2$. Let $\beta_n=D^n_\alpha\beta$. Then
  \begin{equation*}
    \lim_{n\to\infty}\frac{|\log |F_X(\beta_n) ||}{|n|}=\ell_\alpha(X).
  \end{equation*}
\end{theorem}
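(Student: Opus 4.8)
The plan is to obtain the limit directly from the two-sided estimate of Proposition~\ref{prop:facet:length} applied to $\beta_n$, together with the fact that $\ell_{\beta_n}(X)$ grows linearly in $|n|$. \emph{First} I would record the length asymptotics for iterated Dehn twists. The standard intersection-number estimate under a Dehn twist gives, for every simple closed curve $\gamma$,
\begin{equation*}
  \bigl|\,\i(D^n_\alpha\beta,\gamma)-|n|\,\i(\alpha,\beta)\,\i(\alpha,\gamma)\,\bigr|\le \i(\beta,\gamma)
\end{equation*}
(see \cite{FLP}), so that $\tfrac1{|n|}\beta_n\to \i(\alpha,\beta)\,\alpha=2\alpha$ in $\ML(S_{0,4})$. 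Since the length function $\mu\mapsto\ell_\mu(X)$ on $\ML(S_{0,4})$ is continuous and homogeneous of degree one, this yields
\begin{equation*}
  \lim_{|n|\to\infty}\frac{\ell_{\beta_n}(X)}{|n|}=\i(\alpha,\beta)\,\ell_\alpha(X)=2\,\ell_\alpha(X),
\end{equation*}
and in particular $\ell_{\beta_n}(X)\to\infty$, growing only linearly in $|n|$.

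\emph{Next} I would feed $\beta_n$ into Proposition~\ref{prop:facet:length}. Writing $L_n:=\ell_{\beta_n}(X)$, it gives
\begin{equation*}
  \frac{1}{C_2}\,L_n^{2}\,e^{-L_n/2}\ \le\ |F_X(\beta_n)|\ \le\ C_2\,L_n^{2}\,e^{-L_n/2}.
\end{equation*}
Since $L_n\to\infty$, for all sufficiently large $|n|$ the middle term is $<1$, so $\bigl|\log|F_X(\beta_n)|\bigr|=-\log|F_X(\beta_n)|$; taking logarithms in the displayed chain of inequalities gives
\begin{equation*}
  \frac{L_n}{2}-2\log L_n-\log C_2\ \le\ -\log|F_X(\beta_n)|\ \le\ \frac{L_n}{2}-2\log L_n+\log C_2 .
\end{equation*}
Dividing by $|n|$ and letting $|n|\to\infty$, the terms $\tfrac{\log L_n}{|n|}$ and $\tfrac{\log C_2}{|n|}$ tend to $0$ (the first because $L_n=O(|n|)$), while $\tfrac{L_n}{2|n|}\to\ell_\alpha(X)$ by the first step; the squeeze theorem then gives
\begin{equation*}
  \lim_{|n|\to\infty}\frac{\bigl|\log|F_X(\beta_n)|\bigr|}{|n|}=\ell_\alpha(X),
\end{equation*}
which is exactly the claim (the assertion for $n\to+\infty$ being a special case). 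Note how the factor $2=\i(\alpha,\beta)$ cancels the $1/2$ in the exponent, so the limit is again $\ell_\alpha(X)$, exactly as in the $S_{1,1}$ situation where $\i(\alpha,\beta)=1$ and the exponent is $e^{-L_n}$.

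I expect no genuine difficulty in the argument above: all of the arithmetic is driven by Proposition~\ref{prop:facet:length}, and \emph{that} proposition is where the work is concentrated. Its proof adapts the punctured-torus computation of Dumas--Lenzhen--Rafi--Tao: one estimates $|F_X(\alpha)|=\|v_X(\alpha^+)-v_X(\alpha^-)\|_{\Th}$ by approximating the two stretch vectors $v_X(\alpha^\pm)$ through a train track carrying the canonical maximal laminations $\alpha^\pm$, and the exponent $e^{-\ell_\alpha(X)/2}$ on $S_{0,4}$ (versus $e^{-\ell_\alpha(X)}$ on $S_{1,1}$) reflects the different combinatorics of $\alpha^\pm$ on the four-punctured sphere. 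Granting that estimate, the limit computation is routine, just as Theorem~\ref{thm:DLRT:facet:limit} follows from Proposition~\ref{prop:facet:length1}.
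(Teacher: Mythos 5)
Your argument is correct and is exactly the route the paper intends: the remark following Proposition~\ref{prop:correspondence} states that Theorem~\ref{thm:facet:length:limit} is a direct consequence of Proposition~\ref{prop:facet:length}, and your computation (linear growth $\ell_{\beta_n}(X)\sim |n|\,\i(\alpha,\beta)\,\ell_\alpha(X)=2|n|\ell_\alpha(X)$ fed into the two-sided bound $|F_X(\beta_n)|\asymp \ell_{\beta_n}(X)^2e^{-\ell_{\beta_n}(X)/2}$, with the factor $2$ cancelling the $1/2$ in the exponent) is precisely that deduction, mirroring how Theorem~\ref{thm:DLRT:facet:limit} follows from Proposition~\ref{prop:facet:length1}. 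No gaps.
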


\begin{proposition}\label{prop:correspondence}
 Let $X\in\T(S_{0,4})$. There exits a constant $L_2$ depending on $X$ such  that for any simple closed curve $\alpha$ with $\ell_\alpha(X)>L_2$,  there exist a simple closed curve $\beta$ with $\i(\alpha,\beta)=2$ and a positive integer $N$, such that
 \begin{itemize}
   \item for each $n>N$, $F_X(\beta_n)$ is the longest facet in $(v_X(\beta_{n-1}^-),v_X(\alpha^+))$,  and
   \item for each $n<-N$, $F_X(\beta_n)$ is the longest facet in $(v_X(\beta_{n+1}^+),v_X(\alpha^-))$,
 \end{itemize}
 where $\beta_n=D^n_\alpha\beta$.
\end{proposition}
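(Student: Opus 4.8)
The plan is to mirror, for $S_{0,4}$, the argument that Dumas--Lenzhen--Rafi--Tao carried out for $S_{1,1}$ in the proof of \cite[Theorem 1.4]{DLRT2016}, replacing the punctured-torus combinatorics by the punctured-sphere combinatorics throughout. The two inputs I would use as black boxes are Proposition \ref{prop:facet:length} (the two-sided estimate $|F_X(\alpha)|\asymp \ell_\alpha(X)^2 e^{-\ell_\alpha(X)/2}$) and the train-track approximation of simple closed curves on $S_{0,4}$. Recall that on $S_{0,4}$ the role played on $S_{1,1}$ by the pair $\alpha,\beta$ with $\i(\alpha,\beta)=1$ is played by a pair with $\i(\alpha,\beta)=2$, and the Dehn twist $D_\alpha$ acts on the Farey-type graph $\C(S_{0,4})$ exactly as it does on $\C(S_{1,1})$; this is the reason the exponent $\ell_\alpha(X)/2$ (rather than $\ell_\alpha(X)$) appears, and it is already visible in Theorem \ref{thm:facet:length:limit}.

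First I would fix $\alpha$ and choose $\beta$ with $\i(\alpha,\beta)=2$, set $\beta_n = D_\alpha^n\beta$, and recall the combinatorial fact that the curves $\gamma\neq\alpha$ are linearly ordered along the arc $T^1_X\T(S_{0,4})\setminus F_X(\alpha)$ in a way compatible with their ``slopes'' in $\PML(S_{0,4})\cong \partial\mathbb H^2$; in particular the $\beta_n$ with $n>0$ accumulate monotonically onto $v_X(\alpha^+)$ and those with $n<0$ onto $v_X(\alpha^-)$, and the facet $F_X(\gamma)$ for any other $\gamma$ in the sub-arc $(v_X(\beta_{n-1}^-),v_X(\alpha^+))$ corresponds to a curve $\gamma$ with $\i(\alpha,\gamma)\geq 2|n|$ roughly (more precisely, $\gamma$ has a train-track weight at least that large). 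Second, using Proposition \ref{prop:facet:length} I would compare $|F_X(\beta_n)|\asymp \ell_{\beta_n}(X)^2 e^{-\ell_{\beta_n}(X)/2}$ with $|F_X(\gamma)|\asymp \ell_\gamma(X)^2 e^{-\ell_\gamma(X)/2}$: since the map $t\mapsto t^2 e^{-t/2}$ is eventually strictly decreasing, it suffices to show $\ell_\gamma(X) > \ell_{\beta_n}(X)$ by a definite amount for every competitor $\gamma$ in that sub-arc, once $\ell_\alpha(X)$ is large. Third, to get that length inequality I would use the standard estimate $\ell_\gamma(X) = 2\log \i(\alpha,\gamma) + \ell_\alpha(X)\,\cdot(\text{twisting of }\gamma\text{ about }\alpha) + O(1)$ valid when $\alpha$ is long (this is the $S_{0,4}$ analogue of the torus estimate behind Theorem \ref{thm:DLRT:facet:limit}): among curves $\gamma$ lying ``beyond'' $\beta_{n-1}$ toward $\alpha^+$, either $\gamma$ twists at least $n$ times about $\alpha$ (so $\ell_\gamma \geq n\,\ell_\alpha(X) + O(1) > \ell_{\beta_n}(X)$ since $\ell_{\beta_n}(X) = n\,\ell_\alpha(X)+O(1)$), or it has the same twisting as some $\beta_m$ but larger intersection with a fixed auxiliary curve, which again forces its length up; I then choose $L_2$ and $N$ so that all $O(1)$ errors are dominated.

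The main obstacle I expect is the third step: making the dichotomy among ``competitor'' curves $\gamma$ in the sub-arc precise and uniform. One must rule out the possibility that some $\gamma$ with twisting comparable to $n$ but a different, more efficient train-track carrying is shorter than $\beta_n$, and this requires the full strength of the train-track approximation of lengths on $S_{0,4}$ together with a careful bookkeeping of which curves actually occupy the sub-arc $(v_X(\beta_{n-1}^-),v_X(\alpha^+))$ of the unit sphere. Once the length comparison is established with explicit control on the constants, extracting $L_2$ and $N$ and concluding that $F_X(\beta_n)$ is the longest facet in each sub-arc is routine, and the statement for $n<-N$ follows by the symmetric argument with $\alpha^-$ in place of $\alpha^+$. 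I would remark that, exactly as in \cite{DLRT2016}, one does not need the \emph{uniqueness} of the longest facet, only that $\beta_n$ realizes the supremum, which simplifies the estimates slightly.
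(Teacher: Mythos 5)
Your proposal takes essentially the same route as the paper: the paper's entire proof of Proposition \ref{prop:correspondence} consists of the remark that the argument of \cite[Theorem 1.4]{DLRT2016} (i.e.\ Proposition \ref{prop:correspondence1}) carries over verbatim to $S_{0,4}$ once Proposition \ref{prop:facet:length1} and Theorem \ref{thm:DLRT:facet:limit} are replaced by Proposition \ref{prop:facet:length} and Theorem \ref{thm:facet:length:limit}, which is exactly the adaptation you outline, with the facet-length estimate and the train-track/length comparison as the two inputs. One correction to the bookkeeping in your third step: since $\i(\alpha,\beta)=2$, the asymptotics are $\ell_{\beta_n}(X)\approx 2|n|\,\ell_\alpha(X)$ rather than $|n|\,\ell_\alpha(X)$, and the competitors $\gamma$ in the sub-arc need not satisfy $\i(\alpha,\gamma)\gtrsim 2|n|$ (the curves $\beta_m$, $m\ge n$, all have $\i(\alpha,\beta_m)=2$); the definite length gap of order $\ell_\alpha(X)$ comes instead from either one extra twist about $\alpha$ or from $\i(\gamma,\alpha)\ge 4$, exactly as in \cite{DLRT2016}.
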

\begin{remark}
 Theorem \ref{thm:facet:length:limit} is a direct consequence of Proposition \ref{prop:facet:length}. The proof of Proposition \ref{prop:correspondence} is exactly the same as that of Proposition \ref{prop:correspondence1} except that  one replace Proposition \ref{prop:facet:length1} and Theorem \ref{thm:DLRT:facet:limit} by Proposition \ref{prop:facet:length} and Theorem \ref{thm:facet:length:limit}.
  Proposition \ref{prop:facet:length} is a slightly different from Proposition \ref{prop:correspondence1} in the sense that $|F_X(\alpha)|$ is comparable to $\ell_\alpha(X)^2e^{-\ell_\alpha(X)/2}$ in Proposition \ref{prop:facet:length} while it is comparable to $\ell_\alpha(X)^2e^{-\ell_\alpha(X)}$ in Proposition \ref{prop:facet:length1}. The difference is due to the calculation of Fenchel-Nielsen coordinates. Namely, in this case the equation in the fourth line in \cite[Page 50]{DLRT2016} is replaced by the following:
  $$\Delta(t)=\tau_\alpha(X^+_t)-\tau_\alpha(X^-_t)= 4e^t\log \frac{e^{\ell_\alpha(X)}+1}{e^{\ell_\alpha(X)/2}-1}
  -4\log \frac{e^{e^t\ell_\alpha(X)}+1}{e^{e^t\ell_\alpha(X)/2}-1}.$$
The calculation of this equation is almost the same as the one demonstrated in the proof of \cite[Theorem 5.1]{DLRT2016}. We omit the details.
\end{remark}

 \section{Linear structures for the space of measured laminations}\label{sec:linear:structure}

By identifying $\PMLS$ with $\{\mu\in\MLS:\ell_\mu(X)=1\}$, we obtain a homeomorphism
  \begin{equation*}
     \begin{array}{cccc}
       \D_X: & \MLS & \longrightarrow & T^*_X\TS\\
        & \mu & \longmapsto & d_X \ell_\mu.
     \end{array}
   \end{equation*}
   In this way, we associate a linear structure to $\MLS$ by pulling back the linear structure on the cotangent space $T^*_X\TS$.

   Notice that for each $\mu\in \MLS$, we have
   \begin{equation*}
     \|d_X \log \ell_\mu\|_{\mathrm{Th},X}=1.
   \end{equation*}
   Therefore,
   \begin{equation*}
     \|d_X \ell_\mu\|_{\mathrm{Th},X}=\ell_\mu(X), ~\forall \mu\in\MLS, ~\forall X\in \TS.
   \end{equation*}


   For  two marked hyperbolic surfaces $X,Y\in \TS$, let us consider a homeomorphism $\Gamma_{X,Y}:T_X^*\TS\longrightarrow
     T^*_Y\TS$ which is the unique continuous extension of
   \begin{equation}\label{eq:homeo:tangent:XY}
   \begin{array}{cccc}
       \Gamma_{X,Y}:& T_X^*\TS-\{0\}&\longrightarrow&
     T^*_Y\TS-\{0\}\\
     & d_X\ell_\mu&\longmapsto &
      \frac{\ell_\mu(X)}{\ell_\mu(Y)}d_Y\ell_\mu.
   \end{array}
   \end{equation}
   Therefore,
    \begin{equation*}
     \|\Gamma_{X,Y}(d_X\ell_\mu)\|_{\mathrm{Th},Y}=
  \|d_X\ell_\mu\|_{\mathrm{Th},X}=\ell_\mu(X).
   \end{equation*}
   In particular, $\Gamma_{X,Y}(0)=0$.

   In general, $\Gamma_{X,Y}$ is not linear. In fact, we have the following rigidity.

  \begin{theorem}[Linearity rigidity]\label{thm:linearity:rigidity}
  Suppose that $S_{g,n}$ is not a sphere with four or fewer punctures, nor a torus with one or fewer punctures.
  Let $X,Y\in \T(S_{g,n})$. Then
    $\Gamma_{X,Y}$ is linear if and only if $X=Y$.
  \end{theorem}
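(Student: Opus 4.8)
The plan is to prove both directions, with the ``only if'' part carrying all the weight. The ``if'' direction is trivial: when $X=Y$, the defining formula \eqref{eq:homeo:tangent:XY} gives $\Gamma_{X,X}(d_X\ell_\mu) = d_X\ell_\mu$ on the dense set $\MLS \subset T^*_X\TS$, so by continuity $\Gamma_{X,X}$ is the identity, which is certainly linear. For the ``only if'' direction, I would argue by contrapositive: assuming $\Gamma_{X,Y}$ is linear, I want to produce an isometry (in fact the identity marking) forcing $X=Y$. The key structural observation is that a linear $\Gamma_{X,Y}$ would be a linear isometry from $(T^*_X\TS, \|\bullet\|_{\mathrm{Th}})$ to $(T^*_Y\TS, \|\bullet\|_{\mathrm{Th}})$, since we already recorded that $\|\Gamma_{X,Y}(d_X\ell_\mu)\|_{\mathrm{Th},Y} = \|d_X\ell_\mu\|_{\mathrm{Th},X} = \ell_\mu(X)$ for all $\mu$, and these elements $d_X\ell_\mu$ are dense. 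So linearity of $\Gamma_{X,Y}$ immediately places us in the setting of Theorem~\ref{thm:maintheorem1}.

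The crucial extra input is that $\Gamma_{X,Y}$ is not an arbitrary linear isometry: it is \emph{tautologically compatible with lengths}, in the sense that $\Gamma_{X,Y}$ sends the functional $d_X\ell_\mu$ to a positive multiple of $d_Y\ell_\mu$ --- the \emph{same} lamination $\mu$, with no permutation. By Theorem~\ref{thm:maintheorem1}, a linear isometry $\Phi^* = \Gamma_{X,Y}$ is induced by an isometry $\phi:X\to Y$ with $\Phi^*(d_X\ell_\mu) = d_Y\ell_{\phi\mu}$ for every $\mu\in\MLS$. Comparing with \eqref{eq:homeo:tangent:XY} we get $d_Y\ell_{\phi\mu} = \frac{\ell_\mu(X)}{\ell_\mu(Y)}\, d_Y\ell_\mu$ for all $\mu$; since $\D_Y$ is a homeomorphism (injective), $\ell_{\phi\mu}(\cdot) = \frac{\ell_\mu(X)}{\ell_\mu(Y)}\,\ell_\mu(\cdot)$ as functionals on $\C^0(S_{g,n})$, which by evaluating at $Y$ (or on any curve, comparing projective classes) forces $\phi\mu = \mu$ for \emph{every} measured lamination $\mu$. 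Thus $\phi$ is an isometry $X\to Y$ (isotopic to the identity, since it fixes the isotopy class of every simple closed curve and these determine mapping classes), whence $X=Y$ in $\T(S_{g,n})$.

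The main obstacle --- and the reason the hypothesis excludes $S_{0,\le 4}$ and $S_{1,\le 1}$ --- is precisely that the proof above \emph{uses Theorem~\ref{thm:maintheorem1}}, whose proof for non-exceptional surfaces passes through Proposition~\ref{prop:isometry:isomorphism} (every linear isometry of tangent spaces induces an isomorphism of curve complexes) together with the curve complex rigidity of Theorem~\ref{thm:curvecomplex:rigidity}; for the excluded surfaces the curve complex is too small (a graph rather than a simplicial complex of positive dimension) and this machinery is unavailable. So in the writeup I must be careful about the logical order: if Theorem~\ref{thm:linearity:rigidity} is meant to be a \emph{tool} toward Theorem~\ref{thm:maintheorem1} rather than a consequence, I would instead prove it directly --- one shows a linear $\Gamma_{X,Y}$ must send each facet $F^*_X(\alpha)$ of the dual unit ball (the image of $\PD_X$ near $d_X\ell_\alpha$, dual to a maximal flat in the tangent unit sphere) to the corresponding facet $F^*_Y(\alpha)$ for the same $\alpha$, because the formula \eqref{eq:homeo:tangent:XY} already identifies which lamination each ray corresponds to; then using Corollary~\ref{cor:infinitesimal} and Theorem~\ref{thm:convex:model} one extracts that the ratio $\ell_\mu(X)/\ell_\mu(Y)$ is independent of $\mu$ (a constant $c$), so $\ell_\mu(X) = c\,\ell_\mu(Y)$ for all $\mu\in\MLS$; finally, total area considerations (both $X$ and $Y$ are hyperbolic, so have area $2\pi|\chi(S_{g,n})|$, and the length spectrum determines the area via, e.g., the systole-free ``collar'' normalization, or simply the fact that $X\mapsto \ell_\bullet(X)$ is injective $\TS\hookrightarrow \R^{\C^0}$ up to scale only projectively but here the scale is pinned) force $c=1$ and $X=Y$. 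Either route works; I would present whichever is consistent with the paper's dependency graph, flagging that the non-exceptional hypothesis is exactly what makes the first, cleaner route legitimate.
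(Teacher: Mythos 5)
Your primary route is circular relative to the paper's logical structure. In the paper, Theorem \ref{thm:linearity:rigidity} is a tool used to finish the proof of Theorem \ref{thm:maintheorem1}: Case 1 of that proof ends exactly by showing $\widehat{\phi}\circ\Phi^*=\Gamma_{X,\phi^{-1}Y}$ is linear and then invoking Theorem \ref{thm:linearity:rigidity} to conclude $X=\phi^{-1}Y$. So deducing the linearity rigidity from Theorem \ref{thm:maintheorem1} inverts the dependency graph and cannot serve as the paper's proof. You anticipated this and offered a ``direct'' fallback, so the whole weight rests there --- and that sketch is missing the actual mechanism. Observing that $\Gamma_{X,Y}$ matches the facet (or ray) of $d_X\ell_\alpha$ with that of $d_Y\ell_\alpha$ is automatic from \eqref{eq:homeo:tangent:XY} with or without linearity, and neither Corollary \ref{cor:infinitesimal} nor Theorem \ref{thm:convex:model} by itself ``extracts'' that $\ell_\mu(X)/\ell_\mu(Y)$ is constant; you never say where linearity actually gets used.

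The paper's key idea, absent from your fallback, is additivity over disjoint curves: if $\alpha,\beta$ are disjoint and non-isotopic, then $\alpha+\beta\in\MLS$ and $d_X\ell_{\alpha+\beta}=d_X\ell_\alpha+d_X\ell_\beta$. Applying \eqref{eq:homeo:tangent:XY} to the sum, linearity to the two summands, and the linear independence of $d_Y\ell_\alpha$, $d_Y\ell_\beta$ (from Theorem \ref{thm:convex:model}) forces $\ell_\alpha(X)/\ell_\alpha(Y)=\ell_\beta(X)/\ell_\beta(Y)$; connectedness of the curve complex (Theorem \ref{thm:connectedness}) then makes this ratio a single constant $K$ over all simple closed curves, and $K=1$ since otherwise one of the asymmetric distances $\dth(X,Y)$, $\dth(Y,X)$ would be negative, whence $X=Y$ by the marked simple length spectrum. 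This also corrects your explanation of the hypothesis: the exceptional surfaces $S_{0,4}$, $S_{1,1}$ are excluded because they carry no pair of disjoint non-isotopic simple closed curves (so the additivity step has nothing to act on), not because curve-complex automorphism rigidity is unavailable --- the paper's proof of this theorem never uses Proposition \ref{prop:isometry:isomorphism} or Theorem \ref{thm:curvecomplex:rigidity}. Finally, your ``total area / pinned scale'' justification of $c=1$ is too vague to stand as a step, though the intended normalization argument is the one-line nonnegativity of $\dth$ just described.
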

  \begin{proof}
   If $X=Y$, then  $\Gamma_{X,Y}$ is the identity map, which is linear. We now consider the converse. In the following, we assume  that  $\Gamma_{X,Y}$ is linear.

    Let $\alpha$ and $\beta$ be two disjoint,  non-isotopic  simple closed curves.  Then $\alpha+\beta$ is also a measured lamination.
    Therefore,
    \begin{eqnarray*}
      \Gamma_{X,Y}(d_X\ell_\alpha+d_X\ell_\beta)&=&
      \Gamma_{X,Y}(d_X(\ell_{\alpha+\beta}))\\
       &{=}&
      \frac{\ell_{\alpha+\beta}(X)}
      {\ell_{\alpha+\beta}(Y)}
      d_Y\ell_{\alpha+\beta} \qquad (\text{by } (\ref{eq:homeo:tangent:XY}))\\
      &{=}&
      \frac{\ell_\alpha(X)+\ell_\beta(X)}
      {\ell_\alpha(Y)+\ell_\beta(Y)}
      (d_Y\ell_\alpha+d_Y\ell_\beta).
    \end{eqnarray*}
    On the other hand, it follows from the linearity of $\Gamma_{X,Y}$ that
    \begin{eqnarray*}
     \Gamma_{X,Y}(d_X\ell_\alpha+d_X\ell_\beta)&=&
     \Gamma_{X,Y}(d_X\ell_\alpha)+\Gamma_{X,Y}
     (d_X\ell_\beta)\\
     &=& \frac{\ell_\alpha(X)}{\ell_\alpha(Y)}
     d_Y\ell_\alpha+
     \frac{\ell_\beta(X)}{\ell_\beta(Y)}
     d_Y\ell_\beta.
    \end{eqnarray*}
    By Theorem \ref{thm:convex:model}, we see that  $ d_Y\ell_\alpha$ and $ d_Y\ell_\beta$ are linearly independent in $T^*\TS$.
    Comparing the two equations above, we have
    \begin{equation*}
       \frac{\ell_\alpha(X)}{\ell_\alpha(Y)}=
        \frac{\ell_\alpha(X)+\ell_\beta(X)}
      {\ell_\alpha(Y)+\ell_\beta(Y)}=
       \frac{\ell_\beta(X)}{\ell_\beta(Y)}.
    \end{equation*}

    Now, Let   $\alpha,\delta$ be an arbitrary pair of simple closed curves.  By the connectedness of the curve complex (Theorem \ref{thm:connectedness}), there exists a sequence of simple closed curves $\alpha_0=\alpha,\alpha_1,\cdots,\alpha_k=\delta$, such that $\alpha_i$ and $\alpha_{i+1}$ are disjoint for each $i=0,1,\cdots, k-1$.
    It then follows from the discussion above that
    \begin{equation*}
      \frac{\length_{\alpha_0}(X)}{\length_{\alpha_0}(Y)}
      =\frac{\length_{\alpha_1}(X)}{\length_{\alpha_1}(Y)}
      =\cdots=
      \frac{\length_{\alpha_k}(X)}{\length_{\alpha_k}(Y)}.
    \end{equation*}
    In particular,
    \begin{equation*}
      \frac{\length_{\alpha}(X)}{\length_{\alpha}(Y)}=
      \frac{\length_{\delta}(X)}{\length_{\delta}(Y)}.
    \end{equation*}
    By the arbitrariness of $\alpha$ and $\delta$,  we see that there exists a constant $K$, such that
    \begin{equation*}
      \frac{\length_{\gamma}(X)}{\length_{\gamma}(Y)} \equiv K, ~\forall \gamma\in \C^0(S_{g,n}).
    \end{equation*}
    This implies that $K=1$. (Otherwise, $\d_{\mathrm{Th}}(Y,X)<0$ if $K<1$, or $\d_{\mathrm{Th}}(X,Y)<0$ if $K>1$.)
    Therefore, $Y=X$.

  \end{proof}

  \section{Flats in the unit tangent spheres}
  There are flat places on the unit sphere $T^1_X\TS$ of $T_X\TS$.  A \textit{facet} is a maximal flat portion of $T^1_X\TS$  which has maximum possible dimension so that it has interior.

  \begin{theorem}[\cite{Thurston1998},Theorem 10.1]
    \label{thm:mostly:facets}
   There is a bijection between the set of facets on the unit sphere  $T^1_X\TS$ and the set of simple closed curves. In other words, every facet is contained in a plane $d_X \log\ell_\alpha=1$ for some simple closed curve $\alpha$.
  \end{theorem}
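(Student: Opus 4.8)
The plan is to establish the correspondence in two directions: first, that each simple closed curve $\alpha$ gives rise to a genuine facet lying in the hyperplane $\{d_X\log\ell_\alpha = 1\}$; and second, that every facet arises this way. For the first direction, I would fix a simple closed curve $\alpha$ and use Corollary \ref{cor:infinitesimal}: any maximal geodesic lamination $\lambda$ containing $\alpha$ in its support (so that not all leaves go to cusps) produces a stretch line whose tangent vector $v$ satisfies $(d_X\log\ell_\mu)(v) \le 1$ for all $\mu\in\MLS$, with equality exactly when $\mathrm{supp}\,\mu \subset \lambda$. Since distinct maximal completions $\lambda_1,\lambda_2 \supset \alpha$ (obtained by completing $\alpha$ to a maximal lamination by adding finitely many leaves in the various complementary pieces) give vectors $v_1, v_2$ that all lie on the hyperplane $\{d_X\log\ell_\alpha = 1\}$ — because $\alpha \subset \lambda_i$ forces $(d_X\log\ell_\alpha)(v_i) = 1$ — the convex hull of all such stretch directions is a flat region of the unit sphere contained in that hyperplane. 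One then checks this flat has full codimension one, i.e. nonempty interior within the hyperplane, using Thurston's count of the number of ways to complete $\alpha$ (this is exactly the content of the dimension bookkeeping in \cite[§10]{Thurston1998}): the complementary region of $\alpha$ can be triangulated in enough combinatorially distinct ways to span a $(6g-7+2n)$-dimensional affine piece.

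For the second direction, suppose $\mathcal{F}$ is a facet, i.e. a maximal flat portion of $T^1_X\TS$ with interior, so it spans a hyperplane $H$. By Theorem \ref{thm:convex:model}, the unit sphere is dual to the convex body $\PD_X(\PMLS)$ embedded in $T^*_X\TS$; under this duality a flat facet of the unit ball corresponds to a vertex (extreme point that is not a smooth point, in the sense of having a supporting half-space meeting the body in a single ray) of the dual body. The dual body is the image of $\PMLS$ under $[\mu]\mapsto d_X\log\ell_\mu$. So I would argue that the only points of $\PD_X(\PMLS)$ that are "vertices" — points where the boundary fails to be differentiable and supports an entire flat face of the dual unit ball — are precisely the classes $[\alpha]$ of simple closed curves: for a measured lamination $\mu$ whose support is not a single simple closed curve, the length function $\ell_\mu$ is $C^1$ on $\TS$ and in fact the boundary of the convex body is smooth at $[\mu]$ (no flat dual face), whereas at a simple closed curve $\alpha$ the function $\ell_\alpha$ has directional-derivative behaviour that is only piecewise linear, producing the corner. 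The hyperplane $H$ supporting $\mathcal{F}$ is then the annihilator condition $\{v : (d_X\log\ell_\alpha)(v) = 1\}$ for the corresponding $\alpha$, and uniqueness of $\alpha$ follows because distinct curves give distinct (indeed linearly independent, by Theorem \ref{thm:convex:model}) differentials $d_X\log\ell_\alpha$, hence distinct hyperplanes.

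Finally I would record that the two maps just constructed are mutually inverse: the facet attached to $\alpha$ lies in $\{d_X\log\ell_\alpha=1\}$, and the curve recovered from that facet via the duality is again $\alpha$; conversely starting from a facet $\mathcal{F}$ and passing to its curve $\alpha$ and back gives a flat inside $\{d_X\log\ell_\alpha=1\}$ which by maximality must be $\mathcal{F}$ itself. This yields the claimed bijection. The main obstacle I anticipate is the dimension/interior claim in the first direction — showing that the convex hull of the stretch vectors over all maximal completions of $\alpha$ genuinely has interior in the hyperplane rather than being a lower-dimensional flat. This is a combinatorial count of the number of distinct ideal triangulations compatible with cutting along $\alpha$, together with a check that the corresponding stretch directions are affinely independent enough to span; Thurston handles this via the structure of the complementary subsurfaces, and I would follow that argument, treating the once-punctured or small-complementary cases (where $\alpha$ is non-separating and leaves a once-holed torus or four-holed sphere piece) with extra care since there the completion count is smallest.
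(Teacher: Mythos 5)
A preliminary remark: the paper gives no proof of this statement at all — it is quoted verbatim from Thurston (Theorem 10.1 of \emph{Minimal stretch maps between hyperbolic surfaces}) — so your proposal can only be measured against Thurston's argument, not an in-paper one. Your overall frame (convex duality via Theorem \ref{thm:convex:model}: a facet of the unit ball lying in $\{(d_X\log\ell_\alpha)(v)=1\}$ corresponds to the point $d_X\log\ell_\alpha$ of the dual body admitting a full-dimensional cone of supporting hyperplanes; stretch vectors of maximal completions of $\alpha$ lie in the candidate facet by Corollary \ref{cor:infinitesimal}) is the right skeleton, and the uniqueness/inverse bookkeeping at the end is fine. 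But both load-bearing steps have gaps.

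The serious error is in your second direction. You claim that corners of the dual body occur only at simple closed curves because $\ell_\mu$ is $C^1$ on $\TS$ when $\mu$ is not a curve, whereas $\ell_\alpha$ for a curve ``has directional-derivative behaviour that is only piecewise linear.'' This is false: $X\mapsto\ell_\mu(X)$ is real-analytic on $\TS$ for \emph{every} measured lamination, and in particular for every simple closed curve (for curves this is classical), so regularity in the $X$-variable cannot distinguish curves from other laminations and cannot be the mechanism producing the corner. What governs the local shape of $\partial\,\PD_X(\PMLS)$ at $d_X\log\ell_\mu$ is the behaviour of the map $[\nu]\mapsto d_X\log\ell_\nu$ for $\nu$ near $\mu$ in $\PMLS$, i.e.\ how the differentials of \emph{nearby laminations} sit relative to one another in $T^*_X\TS$; showing that a full-dimensional supporting cone occurs exactly at classes of simple closed curves is precisely the hard content of Thurston's Section~10 and is not supplied by your argument. (Note also that the theorem does not assert smoothness of the dual boundary away from curves — for disjoint curves $\alpha,\beta$ the boundary contains the entire segment $\{d_X\log\ell_{a\alpha+b\beta}\}$ — it only rules out \emph{facets} elsewhere, so proving smoothness at all non-curve points is both more than you need and not justified as stated.) Your first direction has a gap as well: that the convex hull of the stretch vectors over all maximal completions of $\alpha$ has nonempty interior in the hyperplane does not follow from any ``count'' of completions. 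One needs a quantitative estimate, roughly that for the stretch vector $v$ of a completion $\lambda\supset\alpha$ and $w\in\ker(d_X\log\ell_\alpha)$, the defect $1-(d_X\log\ell_\mu)(v)$ dominates $\epsilon\,|(d_X\log\ell_\mu)(w)|$ uniformly over laminations $\mu$ not carried by $\lambda$ — the danger being $\mu$ close to, but not contained in, $\lambda$, where both quantities tend to zero. Without such an estimate (or an equivalent analysis, as in Thurston), the flat you construct could a priori have dimension smaller than $6g-7+2n$, and the asserted bijection is not established.
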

  For each simple closed curve $\alpha$, let
  $$ F_X(\alpha):=\{v\in T^1_X\TS:~(d_X \log\ell_\alpha)(v)=1 \} $$
  be the corresponding facet  obtained in Theorem \ref{thm:mostly:facets}.

  \begin{lemma}\label{lem:disjointness}
    Let $\alpha,\beta\in \C^0(S_{g,n})$ be two simple closed curves. Then
    \begin{equation*}
     \i(\alpha,\beta)=0 \iff \partial \mathrm{F}_X(\alpha) \cap \partial \mathrm{F}_X(\beta)\neq \emptyset.
    \end{equation*}
  \end{lemma}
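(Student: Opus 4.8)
The plan is to prove both implications by relating the boundary of a facet $\mathrm{F}_X(\alpha)$ to the stretch directions associated to maximal laminations containing $\alpha$. First I would set up the dictionary: by Corollary~\ref{cor:infinitesimal}, if $v$ is the tangent vector to $\mathrm{stretch}(X,\lambda,t)$ then $(d_X\log\ell_\mu)(v)\le 1$ for all $\mu$, with equality precisely when $\mathrm{supp}\,\mu\subset\lambda$. Hence such a $v$ lies in $\mathrm{F}_X(\alpha)$ exactly when $\alpha\subset\lambda$, and it lies in the intersection of the closures of two facets $\mathrm{F}_X(\alpha)$ and $\mathrm{F}_X(\beta)$ when $\alpha\cup\beta\subset\lambda$. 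Crucially, $\alpha\cup\beta$ is contained in some maximal geodesic lamination $\lambda$ (none of whose leaves goes to a cusp at both ends) if and only if $\alpha$ and $\beta$ can be realized disjointly, i.e.\ $\i(\alpha,\beta)=0$; when they intersect, no single lamination contains both.

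For the direction $\i(\alpha,\beta)=0\Rightarrow \partial\mathrm{F}_X(\alpha)\cap\partial\mathrm{F}_X(\beta)\neq\emptyset$, I would extend $\alpha\cup\beta$ to a maximal lamination $\lambda$ and consider $v=v_X(\lambda)$, the unit tangent to the stretch line along $\lambda$. By the above, $v\in\mathrm{F}_X(\alpha)\cap\mathrm{F}_X(\beta)$; since $v$ is a stretch vector realizing equality for $\mu=\alpha+\beta$ as well, and since the facets $\mathrm{F}_X(\alpha)$ and $\mathrm{F}_X(\beta)$ are distinct maximal flats (distinct supporting hyperplanes $d_X\log\ell_\alpha=1$ and $d_X\log\ell_\beta=1$ by Theorem~\ref{thm:mostly:facets}), their intersection is a proper face of each, hence lies in $\partial\mathrm{F}_X(\alpha)\cap\partial\mathrm{F}_X(\beta)$. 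One should check $v$ is genuinely on the boundary of each facet and not interior; this follows because $d_X\log\ell_\beta$ is not constant equal to $1$ on the interior of $\mathrm{F}_X(\alpha)$ (the interior of a facet for $\alpha$ consists of directions where only curves disjoint-and-equivalent to $\alpha$ achieve the max, a set of lower dimension unless $\beta$ forces it to the boundary), so $v\in\mathrm{F}_X(\alpha)\cap\{d_X\log\ell_\beta=1\}$ is a boundary point.

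For the converse, suppose $\i(\alpha,\beta)>0$ and take any $w\in\partial\mathrm{F}_X(\alpha)\cap\partial\mathrm{F}_X(\beta)$. Then $(d_X\log\ell_\alpha)(w)=(d_X\log\ell_\beta)(w)=1$, so $(d_X\log\ell_{\alpha+\beta})(w) = \tfrac{\ell_\alpha(X)}{\ell_{\alpha+\beta}(X)}\cdot 1 + \tfrac{\ell_\beta(X)}{\ell_{\alpha+\beta}(X)}\cdot 1 = 1$ as well (using additivity of $\ell$ under $+$ on $\ML$, which is linear in the measure), so $\|w\|_{\mathrm{Th}}$ is achieved at $[\alpha+\beta]\in\PML$. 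But $\alpha+\beta$ cannot be enlarged to a maximal lamination with no doubly-cusped leaves, since $\alpha$ and $\beta$ cross; I would invoke the characterization of which $w$ can achieve their Thurston norm at a given lamination — namely the dual description in Theorem~\ref{thm:convex:model} together with the fact that a tangent vector achieving the maximum simultaneously at two transverse curves would have to be tangent to a stretch along a lamination containing both, which is impossible — to derive a contradiction. Concretely: the support of a measured lamination $\mu$ maximizing $(d_X\log\ell_\mu)(w)$ over $\PML$ for a boundary point $w$ of the unit ball is a geodesic lamination admitting a maximal completion, and $\mathrm{supp}(\alpha+\beta)=\alpha\cup\beta$ admits no such completion when $\i(\alpha,\beta)>0$. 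Hence no such $w$ exists and $\partial\mathrm{F}_X(\alpha)\cap\partial\mathrm{F}_X(\beta)=\emptyset$.

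The main obstacle I anticipate is the converse direction, specifically making rigorous the claim that a common boundary point of the two facets forces the existence of a stretch direction (equivalently a maximal lamination) containing both $\alpha$ and $\beta$. This requires the precise structure theory from \cite{Thurston1998}: that the facet decomposition of $T^1_X\TS$ is governed by which maximal laminations a curve embeds in, and that the relative interiors of facets meet along lower-dimensional faces corresponding to \emph{multicurves} (disjoint unions of simple closed curves), never along faces corresponding to transverse pairs. Handling the edge cases — when $\alpha$ and $\beta$ are isotopic (then the facets coincide, so $\partial\mathrm{F}_X(\alpha)\cap\partial\mathrm{F}_X(\beta) = \partial\mathrm{F}_X(\alpha)\ne\emptyset$, consistent with $\i(\alpha,\beta)=0$), and low-complexity surfaces where facets may be vertices or edges — will need separate but short attention.
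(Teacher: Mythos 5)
Your forward direction is essentially the paper's: extend $\alpha\cup\beta$ to a maximal lamination, take the stretch vector, and apply Corollary \ref{cor:infinitesimal}; that part is fine. The converse, however, has a genuine gap. First, a concrete error: when $\i(\alpha,\beta)>0$ the sum $\alpha+\beta$ is \emph{not} a measured lamination (its leaves cross), so the statement that $\|w\|_{\mathrm{Th}}$ ``is achieved at $[\alpha+\beta]\in\PML$'' is meaningless; the identity $(d_X\log(\ell_\alpha+\ell_\beta))(w)=1$ only concerns the function $\ell_\alpha+\ell_\beta$ and says nothing about a maximizer in $\PML$. More seriously, the step you yourself flag as the main obstacle --- that a vector achieving the maximum simultaneously at two transverse curves ``would have to be tangent to a stretch along a lamination containing both'' --- is exactly the content to be proved, and it is not a consequence of Theorem \ref{thm:convex:model}, Theorem \ref{thm:mostly:facets}, or Corollary \ref{cor:infinitesimal} as quoted: a point of $\partial F_X(\alpha)$ need not be a stretch vector at all, and the fallback claim (``the support of a maximizing lamination admits a maximal completion'') gives no obstruction, since every geodesic lamination admits a maximal completion; the real issue is that $\alpha\cup\beta$ is not a geodesic lamination, which your argument never exploits. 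As written, the converse is circular or, at best, outsourced to unspecified ``structure theory.''

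The paper closes this gap with a different and self-contained mechanism that you should adopt. From $w\in\partial F_X(\alpha)\cap\partial F_X(\beta)$ one gets, for every $s\in[0,1]$,
\begin{equation*}
\bigl\|\,s\,d_X\log\ell_\alpha+(1-s)\,d_X\log\ell_\beta\,\bigr\|_{\Th}=1 ,
\end{equation*}
(lower bound by evaluating on $w$, upper bound by convexity), so the whole segment joining $d_X\log\ell_\alpha$ and $d_X\log\ell_\beta$ lies on the dual unit sphere. Theorem \ref{thm:convex:model} then produces, for each $s$, a measured lamination $\mu_s$ with $s\,d_X\log\ell_\alpha+(1-s)\,d_X\log\ell_\beta=d_X\log\ell_{\mu_s}$. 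Now complete $\mu_s$ to a maximal lamination $\widehat{\mu_s}$ and let $v_s$ be the stretch vector along $\widehat{\mu_s}$: Corollary \ref{cor:infinitesimal} gives $(d_X\log\ell_{\mu_s})(v_s)=1$, while $\i(\alpha,\beta)>0$ forces at least one of $\alpha,\beta$ to lie outside $\widehat{\mu_s}$, so the left-hand side evaluates to strictly less than $1$ on $v_s$ for $0<s<1$ --- a contradiction. Note that here the stretch vector is introduced for the \emph{lamination $\mu_s$ supplied by duality}, not claimed to exist for the arbitrary boundary point $w$; this is precisely what your outline is missing.
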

  \begin{proof}
    (i) Assume that $\i(\alpha,\beta)=0$. Let $\lambda$ be a maximal geodesic lamination which contains both $\alpha$ and $\beta$. Let $v$ be the vector tangent to the stretch path $\mathrm{stretch}(X,\lambda,t)$. Let $X_t:=\mathrm{stretch}(X,\lambda,t)$.  Then
     \begin{equation*}
      \frac{ \length_\beta(X_t)}{\length_\beta(X)}
      =\frac{ \length_\alpha(X_t)}{\length_\alpha(X)}=e^t,
     \end{equation*}
     which implies that $(d_X \log\ell_\alpha)(v)=
     (d_X \log\ell_\beta)(v)=1$. In particular,
     \begin{equation*}
       v\in \partial \mathrm{F}_X(\alpha) \cap \partial \mathrm{F}_X(\beta)\neq \emptyset.
     \end{equation*}
     \bigskip

    (ii) Assume that $\partial \mathrm{F}_X(\alpha) \cap \partial \mathrm{F}_X(\beta)\neq \emptyset.$
    Let $v\in \partial \mathrm{F}_X(\alpha) \cap \partial \mathrm{F}_X(\beta)$.  Then
    \begin{equation}\label{eq:alpha:beta}
     \|v\|_{\mathrm{Th}}= \sup_{\mu\in\ML(S)}~ (d_X \log\ell_\mu)(v)=
      (d_X \log\ell_\alpha)(v)= (d_X \log\ell_\beta)(v)=1.
    \end{equation}
    For each $0\leq s\leq  1$,
     \begin{eqnarray*}
       &&\|s\cdot d_X \log\ell_\alpha+(1-s)\cdot d_X \log \ell_\beta\|_{\Th}\\
       &=&\sup_{v'\in T^1_X\TS}(s\cdot d_X \log\ell_\alpha+(1-s)\cdot d_X\log \ell_\beta)(v')\\
       &\geq&(s\cdot d_X \log\ell_\alpha+(1-s)\cdot d_X\log \ell_\beta)(v)\\
       &=&1.  \qquad\qquad(\text{by} (\ref{eq:alpha:beta}))
     \end{eqnarray*}
     On the other hand,
     \begin{eqnarray*}
       \|s\cdot d_X \log\ell_\alpha+(1-s)\cdot d_X \log \ell_\beta\|_{\Th} &\leq& s\| d_X \log\ell_\alpha\|_{\Th}+(1-s)\|d_X\log\ell_\beta \|_{\Th}=1.
     \end{eqnarray*}
     Consequently,
     \begin{equation*}
       \|s\cdot d_X \log\ell_\alpha+(1-s)\cdot d_X \log \ell_\beta\|_{\Th} =1,~~\forall 0\leq s\leq 1.
     \end{equation*}
    In particular, the segment
     \begin{equation*}
       \{s\cdot d_X \log\ell_\alpha+(1-s)\cdot d_X \log\ell_\beta: 0\leq s\leq 1\}\subset T^*_X\TS
     \end{equation*}
     is contained  in the unit sphere of the cotangent space $T^*_X\TS$.
    It then follows from Theorem \ref{thm:convex:model} that  for each $0\leq s\leq 1$,  there exists a unique $\mu_s\in \PML(S)$ such that
    \begin{equation}\label{eq:ut}
      s\cdot d_X \log\length_\alpha+(1-s)\cdot d_X \log\length_\beta=d_X \log\length_{\mu_s}.
    \end{equation}
     Suppose to the contrary that $\i(\alpha,\beta)>0$. Let $\widehat{\mu_s}$ be a maximal geodesic lamination obtained from $\mu_s$ by adding isolated leaves. In other words, $\mu_s$ is the maximal measured geodesic lamination contained in $\widehat{\mu_s}$.  Let $v_s\in T_X\TS$ be the vector tangent to the stretch path $\mathrm{stretch}(X,\widehat{\mu_s},t)$ at $X$.  Then by Corollary \ref{cor:infinitesimal},
     \begin{equation}\label{eq:atmost1}
     (d_X \log\ell_\mu)(v_s)\leq 1, \forall \mu\in\MLS
   \end{equation}and
     \begin{equation}\label{eq:v:mu:s}
       (d_X \log\ell_{\mu} )(v_s)=1\iff \mu\subset\mu_s.
     \end{equation}
     On the other hand, since $\i(\alpha,\beta)>0$, it follows either $\alpha$ or $\beta$ is not contained in $\widehat{\mu_s}$. Consequently,
     \begin{equation*}
        (d_X \log\length_{\alpha})(v_s)<1
     \end{equation*}
     or
     \begin{equation*}
         (d_X \log\length_{\beta})(v_s)<1.
     \end{equation*}
    Combined with (\ref{eq:atmost1}), this implies that
         \begin{equation*}
      (s\cdot d_X \log\length_\alpha+(1-s)\cdot d_X \log\length_\beta)(v_s)<1=(d_X \log\length_{\mu_s})(v_s),~\forall 0<s<1.
    \end{equation*}
   This contradicts to (\ref{eq:ut}), which proves the lemma.
  \end{proof}

  \begin{proposition}\label{prop:isometry:isomorphism}
    Suppose that $S_{g,n}$  and $S_{g',n'}$  are not spheres with four or fewer punctures, nor  tori with one or fewer punctures. Then every surjective $\R$-linear isometry
 \begin{equation*}
  \Phi:(T_X\TS,\|\bullet\|_{\mathrm{Th}})
  \to(T_Y\TSprime,\|\bullet\|_{\mathrm{Th}})
\end{equation*}
induces an isomorphism $\widehat{\Phi}:\mathscr{C}(S_{g,n})\to\mathscr{C}(S_{g',n'})$ such that for every $\alpha\in\mathscr C^0(S_{g,n})$,
\begin{enumerate}[(a)]
  \item $ \Phi(F_X(\alpha)) =F_{Y}(\widehat{\Phi}(\alpha))$, and
  \item $\widehat{\Phi}(\alpha)$ is separating if and only if $\alpha$ is separating.
\end{enumerate}

\end{proposition}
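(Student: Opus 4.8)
The plan is to use the facet structure of the unit sphere together with Lemma~\ref{lem:disjointness} to transport the combinatorics of the curve complex across $\Phi$. First I would note that $\Phi$, being a surjective linear isometry of the Thurston norms, carries the unit sphere $T^1_X\TS$ onto $T^1_Y\TSprime$ and hence maps facets to facets: a facet is characterized intrinsically as a maximal flat subset of the unit sphere of top possible dimension, and this property is preserved by any linear isometry. By Theorem~\ref{thm:mostly:facets} applied to both surfaces, the facets of $T^1_X\TS$ are exactly the $F_X(\alpha)$ for $\alpha\in\C^0(S_{g,n})$, and likewise for $Y$; therefore $\Phi$ induces a bijection $\widehat\Phi$ from $\C^0(S_{g,n})$ to $\C^0(S_{g',n'})$ determined by $\Phi(F_X(\alpha))=F_Y(\widehat\Phi(\alpha))$, which is statement~(a). (Here I would also remark that the hypothesis on the surfaces guarantees that facets have nonempty interior, i.e.\ the dimension is at least one, so Theorem~\ref{thm:mostly:facets} is genuinely applicable; the exceptional low-complexity cases are excluded precisely because there the ``facets'' degenerate to points and are not distinguishable from arbitrary points of the circle.)

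Next I would show that $\widehat\Phi$ is a simplicial isomorphism of the curve complexes. Since $\Phi$ is a homeomorphism of the unit spheres, it maps $\partial F_X(\alpha)$ onto $\partial F_Y(\widehat\Phi(\alpha))$, so $\partial F_X(\alpha)\cap\partial F_X(\beta)\neq\emptyset$ if and only if $\partial F_Y(\widehat\Phi(\alpha))\cap\partial F_Y(\widehat\Phi(\beta))\neq\emptyset$. By Lemma~\ref{lem:disjointness} the left condition is equivalent to $\i(\alpha,\beta)=0$ and the right condition is equivalent to $\i(\widehat\Phi(\alpha),\widehat\Phi(\beta))=0$, so $\widehat\Phi$ and $\widehat\Phi^{-1}$ both preserve disjointness of pairs of curves, i.e.\ adjacency in the curve complex. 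A bijection on vertices that together with its inverse preserves adjacency is an isomorphism of flag simplicial complexes, since higher simplices are exactly the cliques; hence $\widehat\Phi:\C(S_{g,n})\to\C(S_{g',n'})$ is an isomorphism. (One should be slightly careful when $S_{g,n}$ is a sphere with five punctures or a torus with two punctures, where $\C$ is one-dimensional, but there the statement ``$\widehat\Phi$ is an isomorphism of graphs'' is exactly what the disjointness argument gives, and this is all that is needed.)

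Finally, for statement~(b) I would invoke the rigidity of the curve complex together with the separating/non-separating dichotomy. For surfaces that are not spheres with four or fewer punctures nor tori with one or fewer punctures, Theorem~\ref{thm:curvecomplex:rigidity}(a) forces $(g,n)=(g',n')$ unless $\{(g,n),(g',n')\}$ is one of the three exceptional pairs $\{(0,5),(1,2)\}$, $\{(0,6),(2,0)\}$ (the pair $\{(0,4),(1,1)\}$ being excluded by hypothesis). When $(g,n)=(g',n')$ and the surface is not $S_{1,2}$, Theorem~\ref{thm:curvecomplex:rigidity}(b) says $\widehat\Phi$ is (up to the hyperelliptic $\Z_2$ in genus two, which acts trivially on curves) induced by an extended mapping class, and mapping classes preserve the topological type of a curve, hence send separating curves to separating curves. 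For $S_{1,2}$ and for the genuinely exceptional pairs, I would use the last sentence of Theorem~\ref{thm:curvecomplex:rigidity} and Lemma~\ref{lem:patterson}/Remark~\ref{rmk:torus:sphere}: if $\widehat\Phi$ sent some non-separating curve to a separating one, it would induce an isomorphism $\C(S_{0,4})\to\C(S_{1,1})$, contradicting the excluded case (or, in the $S_{1,2}$-to-itself case, contradicting that an isomorphism preserving separating vertices is realized by a mapping class — so if it does not preserve them one uses Remark~\ref{rmk:torus:sphere} to derive the same contradiction). The main obstacle I anticipate is precisely this last bookkeeping over the exceptional and borderline cases: the clean facets-to-facets argument gives the isomorphism (a) almost for free, but pinning down the separating/non-separating statement (b) requires carefully matching each allowed pair $(g,n),(g',n')$ against the rigidity statements and ruling out the pathological $S_{0,4}\leftrightarrow S_{1,1}$ identification.
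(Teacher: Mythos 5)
Your part (a) is essentially the paper's argument and is fine: facets are intrinsic to the unit sphere of the norm, Theorem \ref{thm:mostly:facets} labels them by simple closed curves, and Lemma \ref{lem:disjointness} shows the induced vertex bijection preserves disjointness in both directions, hence gives a simplicial isomorphism.

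Part (b), however, has a genuine gap exactly in the cases you dismiss as bookkeeping. Your proposed contradiction --- that a type-swapping $\widehat\Phi$ would ``induce an isomorphism $\C(S_{0,4})\to\C(S_{1,1})$, contradicting the excluded case'' --- is not a contradiction: Lemma \ref{lem:patterson} asserts precisely that $\C(S_{0,4})$ and $\C(S_{1,1})$ \emph{are} isomorphic (both are the Farey graph), and the hypothesis of the proposition only excludes $S_{0,4}$ and $S_{1,1}$ as the underlying surfaces of the Teichm\"uller spaces, not isomorphisms between their curve complexes. Indeed, at the purely combinatorial level type-swapping maps genuinely exist: every isomorphism $\C(S_{1,2})\to\C(S_{0,5})$ or $\C(S_{2,0})\to\C(S_{0,6})$ must send some non-separating curve to a separating one (all simple closed curves on punctured spheres are separating), and $\mathrm{Aut}(\C(S_{1,2}))$ contains automorphisms not induced by mapping classes which do not preserve the separating curves. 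So no argument using only Theorem \ref{thm:curvecomplex:rigidity}, Lemma \ref{lem:patterson} and Remark \ref{rmk:torus:sphere} can prove (b) in these cases; the isometry $\Phi$ must be used quantitatively, and this is the ingredient missing from your proposal. The paper's route: running the argument of Theorem \ref{thm:linearity:rigidity} (additivity of $d\ell$ over disjoint curves plus connectedness of the curve complex) on the isometry yields a constant $K$ with $\ell_\alpha(X)/\ell_{\widehat\Phi(\alpha)}(Y)\equiv K$ for all curves $\alpha$; compatibility of $\widehat\Phi$ with Dehn twists (from Theorem \ref{thm:curvecomplex:rigidity} and Lemma \ref{lem:patterson}) together with the asymptotics $\ell_{D^n_\alpha\beta}\sim |n|\,i(\alpha,\beta)\,\ell_\alpha$ then shows that if a non-separating $\alpha$ went to a separating $\widehat\Phi(\alpha)$, one could choose (via Remark \ref{rmk:torus:sphere}, inside the complementary subsurfaces of a suitable separating curve) a curve $\beta$ with $i(\alpha,\beta)=1$ but $i(\widehat\Phi(\alpha),\widehat\Phi(\beta))=2$, forcing $\ell_{D^n_\alpha\beta}(X)/\ell_{\widehat\Phi(D^n_\alpha\beta)}(Y)\to K/2$, contradicting the constancy of $K$. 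The same quantitative mechanism is what rules out the would-be isometries realizing the exceptional identifications $\{(1,2),(0,5)\}$ and $\{(2,0),(0,6)\}$; your purely combinatorial appeal to curve-complex rigidity cannot do this.
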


\begin{proof}
By assumption, $\Phi$ induces a bijection between the set of facets of $T^1_X\TS$ and the set of faces of $T^1_Y\T(S_{g',n'})$.
  By Theorem \ref{thm:mostly:facets}, $\Phi$ also induces a bijection $\widehat{\Phi}$ between $\mathscr{C}^0(S_{g,n})$ and $\mathscr{C}^0(S_{g',n'})$. Moreover, it follows from Lemma \ref{lem:disjointness} that $\widehat{\Phi}$ preserves the disjointness of simple closed curves. This implies that $\widehat{\Phi}:\mathscr{C}(S_{g,n})\to\mathscr{C}(S_{g',n'})$ is a simplicial isomorphism. This proves the statement (a).

  Let us now consider (b). If neither $(g,n)$ nor $(g',n')$ equals $(1,2)$, the statement follows from Theorem \ref{thm:curvecomplex:rigidity} and Lemma \ref{lem:patterson}. Now assume that  one of $(g,n)$ and $(g',n')$ equals $(1,2)$, say $(g,n)$,  then by Lemma \ref{lem:patterson} $(g',n')=(1,2)$ or $(0,5)$.

  Notice that the map $\Phi$ induces an $\R$-linear isometry 
  \begin{equation*}
  \Gamma:(T^*_X\TS,\|\bullet\|_{\mathrm{Th}})
  \to(T^*_Y\TSprime,\|\bullet\|_{\mathrm{Th}})
\end{equation*}
so that $$\Gamma\left(\frac {d_X\ell _\alpha}{\ell_\alpha(X)}\right)=\frac{d_Y\ell_{\widehat{\Phi}(\alpha)}}{\ell_{\widehat\Phi(\alpha)}(Y)}$$ for every simple closed $\alpha$. Since the set of weighted simple closed curves is dense in $\MLS$, it follows that  the identity above also holds for any measured lamination.
   Applying an argument  similar to the proof of Theorem \ref{thm:linearity:rigidity},  we see that
  there exists a positive constant $K$ such  that  $~\forall \alpha\in \C^0(S_{g,n})$
\begin{equation}\label{eq:ratio:K}
  \frac{\length_{\alpha}(X)} {\length_{\widehat{\Phi}(\alpha)}(Y)} \equiv K.
\end{equation}

By  Theorem \ref{thm:linearity:rigidity} and Lemma \ref{lem:patterson}, we see that $\widehat{\Phi}$ is compatible with Dehn twists. Namely,
\begin{equation}\label{eq:compatibility}
  \widehat{\Phi}(D^n_\alpha\beta)=
  D^n_{\widehat{\Phi}(\alpha)}\widehat{\Phi}(\beta)
\end{equation} holds for  any integer $n$ and  any simple closed curves $\alpha$ and $\beta$, where  $D^n_\alpha\beta$ is the $n$-th Dehn twist of $\beta$ along $\alpha$. Considering the length functions, we have
  \begin{equation}\label{eq:ratio:DTY}
  \lim_{n\to\infty}\frac{\length_{D^n_{\widehat{\Phi}(\alpha)}
  \widehat{\Phi}(\beta)}(Y)} {|n|\length_{\widehat{\Phi}(\alpha)}(Y) i(\widehat{\Phi}(\alpha), \widehat{\Phi}(\beta)) } =1
\end{equation}
and
 \begin{equation}\label{eq:ratio:DTX}
  \lim_{n\to\infty}\frac{\length_{D^n_{\alpha}
  \beta}(X)} {|n|\length_{\alpha}(X) i(\alpha,\beta)} =1,
\end{equation}
where $i(\cdot,\cdot)$ represents the geometric intersection number.

Suppose to the contrary that $\widehat{\Phi}$ sends some non-separating curve $\alpha$ to a separating curve $\widehat{\Phi}(\alpha)$. Let $\gamma$ be a separating curve on $S_{1,2}$ with $i(\alpha,\gamma)=0$. Consider the component   of $S_{1,2}-\gamma$ which contains $\alpha$, and the component  of $S_{g',n'}-\widehat{\Phi}(\gamma)$ which contains $\widehat{\Phi}(\alpha)$. By Remark \ref{rmk:torus:sphere}, we see that $\widehat{\Phi}$ induces an isomorphism $\C(S_{1,1})\to\C(S_{0,4})$ which is still denoted by $\widehat{\Phi}$ for simplicity.
Let $\beta$ be simple closed curve on $S_{g',n'}-\widehat{\Phi}(\gamma)$ with $i(\alpha,\beta)=1$. Then $\widehat{\Phi}(\beta)$ is a simple closed curve on  $S_{g',n'}-\widehat{\Phi}(\gamma)$ with
$i(\widehat{\Phi}(\alpha), \widehat{\Phi}(\beta)) =2$. Combined with Equations (\ref{eq:compatibility}), (\ref{eq:ratio:DTY}) and (\ref{eq:ratio:DTX}), this implies  that
$$ \lim_{n\to\infty}
\frac{\length_{D^n_\alpha\beta}(X)}
{\length_{\widehat{\Phi}(D^n_\alpha\beta)} (Y)} = \frac{1}{2} \frac{\length_\alpha(X)}{\length_{\widehat{\Phi}(\alpha)}(Y)}, $$
which contradicts Equation (\ref{eq:ratio:K}).
This completes the proof.
\end{proof}

 \section{Proof of Theorems }
 In this final section, we shall prove Theorem \ref{thm:maintheorem1}, \ref{thm:topology}, \ref{thm:localrigidity}, \ref{thm:isometry:group},  \ref{thm:topology:global}, which are  stated in the introduction.
 \begin{proof}[Proof of Theorem \ref{thm:maintheorem1}]
 We divide the proof into two cases depending on whether $(g,n)$ belongs to $\{(0,4),(1,1)\}$ or not.

   {\textbf{Case 1: $(g,n)\notin\{(0,4),(1,1)\}$.}}

  By duality, $\Phi^*$ induces an $\R$-linear surjective isometry
  \begin{equation*}
    \Phi: (T_Y\TS,\|\bullet\|_{\mathrm{Th}})\to
   (T_X\TS,\|\bullet\|_{\mathrm{Th}}).
  \end{equation*}
  By Proposition \ref{prop:isometry:isomorphism} and Theorem \ref{thm:curvecomplex:rigidity},  we see that  there exists a mapping class $\phi\in\MCG^\pm(S)$ such that   for every simple closed curve $\alpha$
   $$ \Phi(F_X(\alpha)) =F_{Y}(\phi\alpha)),$$
   where $$F_X(\alpha)=\{v\in T^1_X\TS: (d_X\log\ell_\alpha)(v)=1\}$$ and $$F_Y(\phi\alpha)=\{u\in T^1_Y\TS: (d_Y\log\ell_{\phi\alpha})(u)=1\}.$$
   Therefore,
   \begin{equation}\label{eq:phi*}
     \Phi^*(d_X\log\ell_\alpha)=d_Y\log\ell_{\phi\alpha}.
   \end{equation}
   for every simple closed curve $\alpha$.

   To complete the proof, it remains to show that $\phi:X\to Y$ is (isotopic to) an isometry.

    Notice that $\phi$ induces an $\R$-linear surjective isometry:
   \begin{equation*}
     \begin{array}{cccc}
       \widehat{\phi}: &  T^*_Y\TS& \longrightarrow&T^*_{\phi^{-1}Y}\TS \\
        & d_Y\ell_\mu & \longmapsto& d_{\phi^{-1}Y}\ell_{\phi^{-1}\mu}.
     \end{array}
   \end{equation*}
   Consequently,
   \begin{equation*}
     \widehat{\phi}\circ\Phi^*:T_X^*\TS\longrightarrow T_{\phi^{-1}Y}^*\TS
   \end{equation*}
   is also an $\R$-linear surjective isometry. Moreover, by (\ref{eq:phi*}), we see that for every simple closed curve $\alpha$,
   \begin{equation*}
       \widehat{\phi}\circ\Phi^*(d_X\log\ell_\alpha)=d_{\phi^{-1}Y}\log \ell_\alpha.
   \end{equation*}
   Then
   \begin{eqnarray*}
      \widehat{\phi}\circ\Phi^*( d_X\ell_\alpha) &= & \ell_\alpha(X) \cdot ( \widehat{\phi}\circ\Phi^*)( d_X\log \ell_\alpha)\\
      &=&  \ell_\alpha(X) \cdot d_{\phi^{-1}Y}\log \ell_\alpha\\
      &=& \frac{\ell_\alpha(X)}{\ell_{\alpha}(\phi^{-1}Y)}
      d_{\phi^{-1}Y}\ell_{\alpha}
   \end{eqnarray*}
   for every simple closed curve $\alpha$. Since the set of weighted simple closed curves is dense in $\MLS$, it follows that
   $ \widehat{\phi}\circ\Phi^*$ coincides with  $ \Gamma_{X,\phi^{-1}Y}$   on a dense subset of $T^*_X\TS$ (see (\ref{eq:homeo:tangent:XY}) for the definition of  $ \Gamma_{X,\phi^{-1}Y}$).
   By continuity, this implies that
   \begin{equation*}
     \widehat{\phi}\circ\Phi^*=\Gamma_{X,\phi^{-1}Y}.
   \end{equation*}
  In particular, $\Gamma_{X,g^{-1}Y}$ is linear. It then follows from Theorem \ref{thm:linearity:rigidity} that $X=\phi^{-1}Y$.

  \bigskip
   {\textbf{Case 2: $(g,n)\in\{(0,4),(1,1)\}$.}}

   The case that $(g,n)=(1,1)$ is treated in \cite{DLRT2016}.  The proof for the case $(g,n)=(0,4)$ is essentially the same as that of the case $(g,n)=(1,1)$. For completeness, we include a proof for $(g,n)=(0,4)$.

 By Theorem \ref{thm:mostly:facets}, the isometry \begin{equation*}
  \Phi:(T_X\T(S_{0,4}),\|\bullet\|_{\mathrm{Th}})
  \to(T_Y\T(S_{0,4}),\|\bullet\|_{\mathrm{Th}})
\end{equation*}
induces a bijection
$\widehat{\Phi}:\mathscr{C}^0(S_{0,4})\to\mathscr{C}^0(S_{0,4})$
between the sets of simple closed curves such that
$$ \Phi(F_X(\alpha))=F_Y(\widehat{\Phi}(\alpha)). $$

Let $\alpha$ a simple closed curve such that both $\ell_\alpha(X)>L_2$ where $L_2$ is the constant Proposition \ref{prop:correspondence}.    By Proposition \ref{prop:correspondence},  there exist $N>0$,  $n_0\in\mathbb{Z}$,  and a simple closed curve $\beta$ with
$\i(\beta,\alpha)=2$, such that for all $n>N$,
\begin{equation}\label{eq:twist:04}
 \widehat{\Phi}( D^n_\alpha(\beta))=D^{n+n_0}_{\widehat{\Phi}(\alpha)}(\widehat{\Phi}(\beta)). 
\end{equation}
Since $\Phi$ is an isometry, it follows that $|F_X(D^n_\alpha(\beta))|
=|F_Y(D^{n+n_0}_{\widehat{\Phi}(\alpha)}(\gamma))|$.
Combined with  Theorem \ref{thm:facet:length:limit}, this implies that
\begin{equation*}\label{eq:length:correspondence1}
  \ell_\alpha(X)=\ell_{\widehat{\Phi}(\alpha)}(Y).
\end{equation*}

Moreover, there exists $N_2>0$ such that for all $n>N_2$, we have
\begin{equation*}
  \ell_{D^n_\alpha(\beta)}(X)>L_2, ~~
  \ell_{D^{n+n_0}_{\widehat{\Phi}(\alpha)}\gamma}(Y)>L_2.
\end{equation*}
Applying the argument above to $D^n_\alpha\beta$ and $\widehat{\Phi}( D^n_\alpha\beta)=D^{n+n_0}_{\widehat{\Phi}(\alpha)}(\gamma)$, we see that for all $n>N_2$,
\begin{equation}\label{eq:length:correspondence2}
  \ell_{D^n_\alpha(\beta)}(X)=
  \ell_{D^{n+n_0}_{\widehat{\Phi}(\alpha)}\gamma}(Y).
\end{equation}

Let $m>N_2$ be such that $d\ell_\alpha$ and $d\ell_{D^{m}_\alpha\beta}$ are linearly independent in $T^*_X\T(S_{0,4})$.
Since $\i(\alpha,D^{m}_\alpha\beta)=\i(\widehat{\Phi}(\alpha),{D^{m+n_0}_{\widehat{\Phi}(\alpha)}\gamma})=2$.
it follows that there exists a mapping class $f\in\MCGS$ such that $Y=fX$ and
$$ f(\alpha)=\widehat{\Phi}(\alpha), f(D^{m}_\alpha\beta)=\widehat{\Phi}(D^{m}_\alpha\beta)={D^{m+n_0}_{\widehat{\Phi}(\alpha)}\gamma}. $$

Notice that $f$ induces an $\R$-linear isometry
\begin{eqnarray*}
f^*: (T^*_X\T(S_{0,4}),\|\bullet\|_{\mathrm{Th}})&\longrightarrow& (T^*_{fX}\T(S_{0,4}),\|\bullet\|_{\mathrm{Th}})\\
d\ell_\mu&\longmapsto&d \ell_{f\mu}.
\end{eqnarray*}
Consequently, the composition map
\begin{eqnarray*}
(f^*)^{-1}\circ\Phi^*: (T^*_X\T(S_{0,4}),\|\bullet\|_{\mathrm{Th}})&\longrightarrow& (T^*_{X}\T(S_{0,4}),\|\bullet\|_{\mathrm{Th}})
\end{eqnarray*}
is an $\R$-linear isometry with
$$ (f^*)^{-1}\circ\Phi^* (d\ell_\alpha)=d\ell_\alpha,~ (f^*)^{-1}\circ\Phi^* (d\ell_{D^m_\alpha\beta})=d\ell_{D^m_\alpha\beta}. $$
Since $d\ell_\alpha$ and  $d\ell_{D^m_\alpha\beta}$ are  linearly independent, this implies that $(f^*)^{-1}\circ\Phi^*$ is  the identity map. Therefore $\Phi^*=f^*$. This completes the proof.
\end{proof}

\begin{proof}[Proof of Theorem \ref{thm:topology}]
Considering the dimension $\mathrm{Dim}_\R(\TS)=6g-6+2n$, we see that $(g,n)\in\{(0,4),(1,1)\}$ if and only if $(g',n')\in\{(0,4),(1,1)\}$. We divide the proof into two cases depending on whether $(g,n)$ and $(g',n')$ belong to $\{(0,4),(1,1)\}$ or not.

{\textbf{Case 1}: $(g,n)\notin\{(0,4),(1,1)\}$ and $(g',n')\notin\{(0,4),(1,1)\}$.}
  In this case,  Proposition \ref{prop:isometry:isomorphism} implies  that $\Phi^*$ induces an isomorphism $\widehat{\Phi}: \mathscr C(S_{g,n})\to \mathscr C (S_{g',n'})$ which preserves curve type.  It then follows from Theorem \ref{thm:curvecomplex:rigidity} that either \begin{itemize}
     \item $S_{g,n}$ and $S_{g',n'}$ are homeomorphic, or
     \item $\{S_{g,n},S_{g',n'}\}$ is one of $\{\{S_{0,5},S_{1,2}\},\{S_{0,6},S_{2,0}\}\}$.
   \end{itemize}
   On the other hand,  $S_{1,1}$ (resp. $S_{2,0}$) contains non-separating simple closed curves, while all simple closed curves on $S_{0,5}$ (resp. $S_{0,6}$) are separating. This implies that $\{S_{g,n},S_{g',n'}\}$ is neither $\{S_{0,5},S_{1,2}\}$ nor $\{S_{0,6},S_{2,0}\}$. Therefore, $S_{g,n}$ and $S_{g',n'}$ are homeomorphic.

 {\textbf{Case 2}: $(g,n)\in\{(0,4),(1,1)\}$ and $(g',n')\in\{(0,4),(1,1)\}$.}
  Suppose to the contrary that $S_{g,n}$ and $S_{g',n'}$ are not homeomorphic. Then $\{S_{g,n},S_{g',n'}\}=\{(0,4),(1,1)\}$. Without loss of generality we may assume that $S_{g,n}=S_{0,4}$ and $S_{g',n'}=S_{1,1}$.
  By Theorem \ref{thm:mostly:facets}, the isometry \begin{equation*}
  \Phi:(T_X\T(S_{0,4}),\|\bullet\|_{\mathrm{Th}})
  \to(T_Y\T(S_{1,1}),\|\bullet\|_{\mathrm{Th}})
\end{equation*}
induces a bijection
$\widehat{\Phi}:\mathscr{C}^0(S_{0,4})\to\mathscr{C}^0(S_{1,1})$
between the sets of simple closed curves such that
$$ \Phi(F_X(\alpha))=F_Y(\widehat{\Phi}(\alpha)). $$

Let $\alpha$ a simple closed curve such that both $\ell_\alpha(X)>L_2$ and $\ell_{\hat{\alpha}}(Y)>L_1$, where $L_1$ and $L_2$ are the constants from Proposition \ref{prop:correspondence1} and Proposition \ref{prop:correspondence}.    By Proposition \ref{prop:correspondence1} and Proposition \ref{prop:correspondence},  there exist $N>0$,  $n_0\in\mathbb{Z}$,  simple closed curves $\beta$ with
$\i(\beta,\alpha)=2$ and $\gamma$ with $\i(\gamma,\widehat{\Phi}(\alpha))=1$, such that for all $n>N$,
\begin{equation*}
 \widehat{\Phi}( D^n_\alpha\beta)=D^{n+n_0}_{\widehat{\Phi}(\alpha)}\gamma.
\end{equation*}
Since $\Phi$ is an isometry, it follows that $|F_X\left(D^n_\alpha\beta\right)=|F_Y\left(D^{n+n_0}_{\widehat{\Phi}(\alpha)}\gamma\right)|$.
Combined with  Theorem \ref{thm:DLRT:facet:limit} and Theorem \ref{thm:facet:length:limit}, this implies that
\begin{equation}\label{eq:length:correspondence1}
  \ell_\alpha(X)=\ell_{\widehat{\Phi}(\alpha)}(Y).
\end{equation}

Moreover, there exists $N_2>0$ such that for all $n>N_2$, we have
\begin{equation*}
  \ell_{D^n_\alpha\beta}(X)>L_2, ~~
  \ell_{D^{n+n_0}_{\widehat{\Phi}\alpha}\gamma}(Y)>L_1.
\end{equation*}
Applying the argument above to $D^n_\alpha\beta$ and $\widehat{\Phi}( D^n_\alpha\beta)=D^{n+n_0}_{\widehat{\Phi}(\alpha)}\gamma$, we see that for all $n>N_2$,
\begin{equation}\label{eq:length:correspondence2}
  \ell_{D^n_\alpha\beta}(X)=
  \ell_{D^{n+n_0}_{\widehat{\Phi}\alpha}\gamma}(Y).
\end{equation}
On the other hand, we have
\begin{equation*}
 \lim_{n\to+\infty} \frac{\ell_{D^n_\alpha\beta}(X)}{n}=2\ell_\alpha(X)
\end{equation*}
and
\begin{equation*}
  \lim_{n\to+\infty}\frac{\ell_{D^{n+n_0}_{\widehat{\Phi}(\alpha)}\gamma}(Y)}{n}=\ell_{\widehat{\Phi}(\alpha)}(Y).
\end{equation*}
These contradict Equations (\ref{eq:length:correspondence1}) and (\ref{eq:length:correspondence2}).  Therefore, $\{S_{g,n},S_{g',n'}\}\neq \{S_{0,4},S_{1,1}\}$.
This completes the proof.
\end{proof}
 
\begin{proof}[Proof of Theorem \ref{thm:localrigidity}]
The proof we present here follows essentially that of  \cite[Theorem 1.5]{DLRT2016}.
Let $F: (U,\dth)\to (\TS,\dth)$ be an isometric embedding. By \cite[Theorem 6.1]{DLRT2016} the Thurston norm is locally Lipschitz (locally $C^{0,1}_{loc}$). It then follows from \cite[Theorem A, Theorem B]{MatveevTroyanov2017} that $F$ is $C^{1,1}_{loc}$ and its differential is norm-preserving. Therefore, for each $X\in U$,
\begin{equation*}
  d_X F: T_X \TS\longrightarrow T_{F(X)}\TS
\end{equation*}
is an $\R$-linear surjective isometry for the Thurston norm. By duality, $d_X F$ induces an $\R$-linear surjective isometry
\begin{equation*}
  d^*_X F: (T^*_{F(X)} \TS,\|\bullet\|_{\mathrm{Th}})\longrightarrow
  (T^*_X\TS,\|\bullet\|_{\mathrm{Th}}).
\end{equation*}
  By Theorem \ref{thm:maintheorem1}, there exists  $\phi_F (X)\in\MCGS$ such that \begin{equation}\label{eq:F:phi}
   F(X)=\phi_F (X)(X).
  \end{equation}

   {Let $j:\MCGS\to\mathrm{Isom}(\TS,\dth)$ be the natural homomorphism. Let $\widehat U\subset U$ be the subset of points with trivial stablizer in $j(\MCGS)$. Then $\widehat U$ is an open and dense subset of $U$. Let $X_0\in \widehat U$. By the proper discontinuity action of $\MCGS$ on $\TS$, we see that there exist a small neighbourhood $U_0\subset U$ of $X_0$ and a small neighbourhood $V_0\subset F(U)$ of $F(X_0)$, such that 
   \begin{equation*}
     \{\phi\in j(\MCGS): \phi(U_0)\cap V_0\neq\emptyset\}=\{j(\phi_F(X_0))\}.
   \end{equation*}
   It then follows from the continuity of $F$ and (\ref{eq:F:phi}) that
   \begin{equation*}
     j(\phi_F(X))=j(\phi_F(X_0))
   \end{equation*}
    for all $X$ sufficiently close to $X_0$. In other words, the map $j\circ\phi_F: \widehat{U}\to \mathrm{Isom}(\TS,\dth)$ is locally constant. Hence, $j\circ \phi_F$ is  constant  on each connected component of $\widehat U$. }
    
    {For each $\psi\in j(\MCGS)$, let $\mathrm{Fix}_\psi\subset \TS$ be the subset of fixed points. Notice that $\mathrm{Fix}_\psi$ is a real submanifold.\footnote{This  can be seen using the Teichm\"uller metric. Since $\TS$ is uniquely geodesic in the Teichm\"uller metric \cite[Section 7.4]{GL2000} and $\MCGS$ acts as isometries with respect to the Teichm\"uller metric, it follows that $\mathrm{Fix}_\psi$ is a totally geodesic submanifold of $\TS$.}  Let $U_1$ be the union of those $\mathrm{Fix}_\psi$ which are of real codimension one. Let $U_{\geq2}$ be the union of those $\mathrm{Fix}_\psi$ which are of real codimension at least two. Combining with the proper continuity action of $\MCGS$ on $\TS$, we see that the complement $U\setminus U_{\geq2}$ is connected in $U$.}
    
    {Let $U'=U_1\setminus U_{\geq2}$.      Let $X'$ be an arbitrary point in $U'$. Taking $W\subset U$ to be a small neighbourhood of $X'$, we may assume that $W\setminus U'$ is contained in $\widehat U$ and  has exactly two components, which we denote by $W_+$ and $W_-$. By the discussion above, we see that $j\circ \phi_F$ is constant on $W_-$ and on $W_+$.  Let $\psi_-$ and $\psi_+$ be the corresponding values. The continuity of $F$ implies that $\psi_+\circ \psi_-^{-1}\in j(\MCGS)$ fixes $W\cap U'$ pointwise, and is therefore the identity or a reflection.  In the latter case, $F$ would map both sides of $W\cap U'$ to the same side of $F(W\cap U')$. This contradicts the fact that $F$ is $C^1$ at $X'$. Therefore, $\psi_+\circ\psi_-^{-1}$ is the identity.  This implies that $j\circ \phi_F$ is locally constant on $\widehat U\cup U'$. Notice that $\widehat U\cup U'=U\setminus U_{\geq2}$, which is both connected and dense by the previous two paragraphs.  Therefore,  $j\circ\phi_F$ extends to a constant function on $U$ with value $j\circ\phi_F(X_0)$. }
 \end{proof}

 \begin{proof}[Proof of Theorem \ref{thm:isometry:group}]
 Let $j:\MCGS\to\mathrm{Isom}(\TS,\dth)$ be the natural homomorphism. By Theorem \ref{thm:localrigidity}, we see that $j$ is surjective. The theorem then follows from the following fact (\cite{Birman1975,Viro1972})
  \begin{equation*}
    \mathrm{Kernel}(j)\simeq\left\{
    \begin{array}{ll}
    \Z_2, &\text{if }  (g,n)\in\{(2,0), (1,1),(1,2)\},\\
    {\Z_2\oplus\Z_2}, &\text{if }  (g,n)=(0,4),\\
      \{\textup{id}\}, &\text{otherwise.}
    \end{array}
    \right.
  \end{equation*}
 \end{proof}

 \begin{proof}[Proof of Theorem \ref{thm:topology:global}]
  This follows  from Theorem \ref{thm:topology} {and \cite[Theorem 6.1]{DLRT2016} }. 
 \end{proof}



\end{document}